\numberwithin{equation}{section}
\numberwithin{figure}{section}
\theoremstyle{plain}
\newtheorem{thm}{Theorem}[section]
\newtheorem{obs}[thm]{Observation}
\newtheorem{lemma}[thm]{Lemma}
\newtheorem{conj}[thm]{Conjecture}
\newtheorem{question}[thm]{Question}
\newtheorem{problem}[thm]{Problem}
\theoremstyle{definition}
\newcommand\dist{\text{dist}}
\renewcommand{\epsilon}{\varepsilon}
\def\tref#1{ {\tiny \ref{#1} } }
\def\itm#1{\rm ({#1})}
\def\itmarab#1{\mbox{\itm{{\it #1\,}\arabic{*}\hspace{.05em}}}}
\renewcommand{\epsilon}{\varepsilon}
\begin{document}

\title[Fast Waiter-Client games]{Fast strategies in Waiter-Client games on $K_n$}
\author[Dennis Clemens]{Dennis Clemens}
\address{(DC) Technische Universit\"at Hamburg, Institut f\"ur Mathematik, Am Schwarzenberg-Campus 3, 21073 Hamburg, Germany }
\email{dennis.clemens@tuhh.de}
\author[Pranshu Gupta]{Pranshu Gupta}
\address{(PG) Technische Universit\"at Hamburg, Institut f\"ur Mathematik, Am Schwarzenberg-Campus 3, 21073 Hamburg, Germany }
\email{pranshu.gupta@tuhh.de}
\author[Fabian Hamann]{Fabian Hamann}
\address{(FH) Technische Universit\"at Hamburg, Institut f\"ur Mathematik, Am Schwarzenberg-Campus 3, 21073 Hamburg, Germany }
\email{fabian.hamann@tuhh.de}
\author[Alexander Haupt]{Alexander Haupt}
\address{(AH) Technische Universit\"at Hamburg, Institut f\"ur Mathematik, Am Schwarzenberg-Campus 3, 21073 Hamburg, Germany }
\email{alexander.haupt@tuhh.de}
\author[Mirjana Mikala\v{c}ki]{Mirjana Mikala\v{c}ki}
\address{(MM) University of Novi Sad, Faculty of Sciences, Department of Mathematics and Informatics, Trg Dositeja Obradovi\v{c}a 4, 21000 Novi Sad, Serbia}
\email{mirjana.mikalacki@dmi.uns.ac.rs}
\author[Yannick Mogge]{Yannick Mogge}
\address{(YM) Technische Universit\"at Hamburg, Institut f\"ur Mathematik, Am Schwarzenberg-Campus 3, 21073 Hamburg, Germany }
\email{yannick.mogge@tuhh.de}

\pagestyle{plain}

\date{\today}

\maketitle

\begin{abstract}
Waiter-Client games are played on some hypergraph $(X,\mathcal{F})$, where $\mathcal{F}$ denotes the family of winning sets. For some bias $b$, during each round of  
the game Waiter offers $b+1$ elements of $X$ to Client from which he claims one for himself and the rest go to Waiter. Proceeding like this Waiter wins the game if she forces Client to claim all the elements of any winning set from $\mathcal{F}$. In this paper we study fast strategies for several Waiter-Client games played on the edge set of the complete graph, i.e.~$X=E(K_n)$, in which the winning sets are perfect matchings, Hamilton cycles,
pancyclic graphs, fixed spanning trees or factors of a given graph.
\end{abstract}

\section{Introduction}

\subsection{Positional games.}
Positional games belong to the family of perfect information games between two players, 
and they have become a field of intense studies throughout the last decades 
(for an introductory overview see e.g.~\cite{HKSSbook} \cite{Kriv2014}). 
Starting with the seminal papers by Hales and Jewett~\cite{HJ1963} 
as well as Erd\H{o}s and Selfridge~\cite{ES1973}, 
many beautiful results have been proven since then.
Some of these even provide intriguing connections between positional games and 
other branches of combinatorics such as
random graph theory (see e.g. \cite{Beckbook}, \cite{BHKL2016}, \cite{BL2000}, 
\cite{FKN2015}, \cite{Kriv2011}, \cite{KT2018}), 
extremal combinatorics and Ramsey theory 
(see e.g. \cite{Beckbook}, \cite{HJ1963}, \cite{NSS2016}).  

Related to the so-called {\em strong games}, 
among which Tic-Tac-Toe and Hex are probably the most famous examples (see e.g.~\cite{Beckbook}),
{\em Maker-Breaker} games were introduced in~\cite{ES1973} and have since become the most studied type of positional games.
In a more general form, the latter are played as follows. 
Given a hypergraph $(X,\mathcal{F})$ and a positive integer $b$,  two players, Maker and Breaker, alternate in turns. In each round Maker is allowed to claim (up to) one element from the {\em board X}, which has not been claimed before in the game, while Breaker is allowed to claim (up to) $b$ such elements in each round. 
If, until the end of the game, Maker succeeds in fully claiming all the elements of any {\em winning set} $F\in\mathcal{F}$,
she wins the game. Otherwise, Breaker has claimed at least one element of each of the winning sets, and is declared the winner of the game. No draw is possible. Here, the integer $b$ is called the {\em bias} of Breaker. 
In the case when $b=1$ holds, the game is called {\em unbiased},
and in all other cases the game is called $b$-biased or {\em biased} for short. 
Note that it also makes sense to introduce a bias for Maker (see e.g.~\cite{CM2018}, \cite{HMS2012}),
but we will not consider this case in the remainder of our paper.

Many natural games played on the edge set of a complete graph $K_n$ (i.e.~ $X=E(K_n)$) 
are easy wins for Maker in the unbiased setting. As an example, consider the {\em perfect matching game}. 
Here, Maker aims at claiming all the edges of a perfect matching (of $K_n$), and it is a rather easy exercise to see that Maker has a strategy to win this game, provided $n$ is sufficiently large. 
Having a simple winning strategy raises a few natural questions:
What happens if we give Breaker more power by increasing his bias $b$?
-- This then leads to the question of finding the {\em threshold bias} 
around which a Maker's win suddenly turns into a Breaker's win (see e.g.~\cite{GS2009}, \cite{Kriv2011}).
What happens if we restrict Maker's options by playing on a sparser
board? -- A typical approach to this question is to play the game on the edges of a sparse random graph $G_{n,p}$,
and to ask, around which value of $p$ a likely Maker's win
turns into a likely Breaker's win (see e.g.~\cite{NSS2016}, \cite{SS2005}). Also, what can we say if we want Maker to win as fast as possible?

The last question is interesting in its own right, since
an answer to it can be seen as a measure for how powerful
Maker is. But finding fast winning strategies
is even more relevant as sometimes winning strategies for more 
involved games can be given by splitting the game into several stages, in which Maker first aims to create a simpler, or a nice behaving, structure as fast as possible, so that afterwards she still has enough options to extend 
this structure into a full winning set (for an example, see e.g.~\cite{Kriv2011}).
In our paper, we will focus on finding the fast winning strategies.
However, we will consider {\em Waiter-Client games} instead of Maker-Breaker games.

\subsection{Waiter-Client games}

Waiter-Client games and Client-Waiter games, 
formerly known as Chooser-Picker games and Picker-Chooser games (see e.g.~\cite{Beckbook}),
have received increasing attention recently. Again, played on some hypergraph
$(X,\mathcal{F})$ with some bias $b$, the players claim elements from $X$
and exactly one player aims for a winning set while the other one wants to prevent that.
This time, instead of claiming the elements of $X$ alternately,
the distribution of the board elements is done by the following rule:
In every round, Waiter picks $b+1$ previously unclaimed elements of the board and Client chooses exactly one of these elements to be claimed by him,
while all the other elements go to Waiter's graph. 
In the Waiter-Client game, Waiter is said to be the winner if she makes
Client claim all the elements 
of any winning set from $\mathcal{F}$. 
In this case we say that Waiter
{\em forces} Client to occupy a winning set. 
In the Client-Waiter game, Waiter however is said to be the winner if she prevents 
Client from claiming all the elements of any winning set until the end of the game. Otherwise, Client is declared the winner.

One may wonder whether there is some connection between the above games and Maker-Breaker games in general.
Beck~\cite{Beckbook} observed that for a few natural 
families of winning sets $\mathcal{F}$
Waiter wins the Waiter-Client (or Client-Waiter) game more 
easily than Maker (or Breaker) wins the corresponding Maker-Breaker game. Later on, this was conjectured by 
Csernenszky, M\'andity and A.~Pluh\'ar~\cite{CMP2009}.

\begin{conj}[Conjecture 1 in~\cite{CMP2009}]
Waiter wins a Waiter-Client (Client-Waiter) game on 
$(X,\mathcal{F})$ if Maker (Breaker) as second player 
wins the corresponding Maker-Breaker game.
\end{conj}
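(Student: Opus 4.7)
My plan is to attempt a \emph{strategy simulation} argument in which Waiter plays the Waiter-Client game by internally maintaining a virtual Maker-Breaker game on the same board $(X,\mathcal{F})$, with Client cast in the role of Maker. Fix a winning strategy $\sigma$ for Maker as second player in the corresponding Maker-Breaker game. For simplicity take $b=1$. In each round, Waiter picks an unclaimed element $e \in X$, computes $f := \sigma(H \cup \{e\})$ where $H$ is the current simulated history (with $e$ treated as Breaker's latest move), and offers $\{e,f\}$ to Client. If Client selects $f$, the virtual game proceeds cleanly: Breaker played $e$, Maker played $f$, and the simulation matches reality because Client's pile equals Maker's pile and Waiter's pile equals Breaker's pile. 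After sufficiently many rounds Maker's strategy guarantees a winning set in the Maker pile, so Waiter wins.

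The obvious obstacle is the opposite case: Client selects $e$, leaving Waiter with $f$. Then the intended identification of piles breaks, since the element that was meant to be a Breaker move has ended up in Client's hand. The standard remedy, and what I would attempt first, is to force \emph{symmetry} of the offered pair: choose $e,f$ so that simultaneously $f = \sigma(H \cup \{e\})$ \emph{and} $e = \sigma(H \cup \{f\})$. Then regardless of Client's pick, the element he takes is Maker's prescribed response in one of two legitimate continuations of the simulation, and the element left with Waiter is a valid Breaker move in that continuation. This is in the spirit of pairing arguments that convert Maker-Breaker wins into Waiter-Client wins in structured games such as the perfect matching game.

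The hardest step, and the reason I expect this naive approach to fall short in full generality, is that no such symmetric pair $(e,f)$ need exist at every round for an arbitrary family $\mathcal{F}$: different hypothetical Breaker openings typically give different Maker responses under $\sigma$, so the two candidate simulated games branch into incompatible virtual histories. A natural refinement is to maintain a \emph{tree} of virtual games, one per bit-string of Client's past choices, and argue by induction that along at least one surviving branch Maker's strategy still applies and the Client pile along that branch ends up containing a winning set. Alternatively, one could try a potential-function or Erd\H{o}s-Selfridge-style approach, tracking a weighted danger function of Client's edges and showing Waiter can always raise it toward the winning threshold.

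For the Client-Waiter direction the analogous plan is to fix a winning strategy $\tau$ for Breaker as second player and have Waiter simulate it, pairing offers so that whichever element Client picks corresponds in a simulated history to a Maker move countered by $\tau$; the remaining element is then Breaker's simulated response. The main obstacle is the same consistency issue, and I would expect a general resolution to require either a structural hypothesis on $\mathcal{F}$ (pairing, transversal, or monotonicity properties) or a genuinely new averaging/probabilistic technique that avoids committing to a single simulated history.
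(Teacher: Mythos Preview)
The statement you are trying to prove is labelled in the paper as a \emph{conjecture}, not a theorem, and the paper does not contain a proof of it. More to the point, immediately after stating it the authors write that ``the above conjecture has been disproved by Knox~\cite{Knox2012}''. So your proposal is an attempted proof of a false statement, and there is no ``paper's own proof'' to compare against.

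Your own write-up already identifies the breaking point: when Client picks the element $e$ that you intended as the simulated Breaker move, the correspondence between Client's pile and Maker's pile collapses, and you correctly note that a symmetric pair $(e,f)$ with $f=\sigma(H\cup\{e\})$ and $e=\sigma(H\cup\{f\})$ need not exist for arbitrary $\mathcal{F}$. That obstacle is not a technicality to be patched by maintaining a tree of virtual histories or by a potential-function argument; it is genuine, and Knox's counterexamples witness that no such simulation can work in general. The honest conclusion of your analysis should have been that the naive strategy-stealing fails and that you do not see how to repair it --- which is in fact the state of affairs, since the conjecture is false.
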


While the above conjecture has been disproved
by Knox~\cite{Knox2012} recently, there is still a chance that Beck's intuition holds for many typical winning families $\mathcal{F}$. A few examples of the games supporting the conjecture are already given by M.~Bednarska-Bzd\c{e}ga in~\cite{BB2013}. In our paper we will provide a variety of examples in which
Waiter in a Waiter-Client game wins at least as fast
as Maker does in the corresponding Maker-Breaker game. To make this more precise, let 
$$
\tau_{MB}(\mathcal{F},b) ~~ \text{and} ~~ \tau_{WC}(\mathcal{F},b)
$$
denote the smallest integer $t$ such that Maker or Waiter, respectively, has a strategy to win the $b$-biased game
with winning family $\mathcal{F}$ within $t$ rounds.

Observe first that both values are bounded trivially from below by $m_{\mathcal{F}}:=\min\{|F|:~ F\in\mathcal{F}\}$. In case that Waiter (or Maker) can win in exactly this number of rounds, we will say that the game is won {\em perfectly fast},
while we say that the game is won {\em asymptotically fast}
when the number of rounds is of size $(1+o(1))m_{\mathcal{F}}$.

\medskip

{\bf Perfect matching game.} As stated earlier, the perfect matching game is an easy win for Maker in the unbiased 
Maker-Breaker game on $E(K_n)$. To be more precise, let us denote with $\mathcal{PM}_n$
the family of all perfect matchings of $K_n$. Hefetz, Krivelevich, Stojakovi\'c and Szab\'o~\cite{HKSS2009}
proved that $\tau_{MB}(\mathcal{PM}_n,1)=\frac{n}{2}+1$ for every large enough even $n$, thus showing that the unbiased perfect matching game is won asymptotically fast.
In fact, at the moment when Maker wins the game, her graph consists of a perfect matching and
at most one {\em wasted} edge. We will show that the same is true
for the unbiased Waiter-Client game on $K_n$.

\begin{thm}\label{thm:pm_unbiased}
For every large enough even integer $n$ the following holds:
$$
\tau_{WC}(\mathcal{PM}_n,1)=\frac{n}{2}+1~ .
$$
\end{thm}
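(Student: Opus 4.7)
The proof splits into matching lower and upper bounds, and I would handle them separately.

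Plan for the lower bound $\tau_{WC}(\mathcal{PM}_n,1)\geq n/2+1$: I would have Client play a simple obstruction — whenever one of Waiter's two offered edges meets $V(G_C)$, Client claims it, and otherwise he picks arbitrarily. As long as $G_C$ remains a matching, after $j-1$ rounds exactly $n-2(j-1)$ vertices lie outside $V(G_C)$, so by round $n/2$ only two such vertices remain and at most one edge fits entirely outside $V(G_C)$. Hence Waiter is forced to offer at least one edge incident to $V(G_C)$, which Client claims, producing a vertex of degree two in $G_C$. Since $|E(G_C)|=n/2$ and $G_C$ is then not a matching, $G_C$ cannot contain a perfect matching, and Waiter has not won after $n/2$ rounds.

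For the upper bound I plan a two-phase Waiter strategy. Waiter fixes an arbitrary reserve vertex $v^*$ at the start, and in Phase 1 (rounds $1$ through $n/2-1$) she plays a ``cone'' strategy that avoids $v^*$. In round $j$ she chooses a centre $c_j$ and two arms $p_j,q_j$ from the uncovered set $U_j=V\setminus V(M)$ with $c_j,p_j,q_j\neq v^*$, and she offers $\{c_j,p_j\}$ and $\{c_j,q_j\}$. The first centre $c_1$ is arbitrary; for $j\geq 2$ she sets $c_j$ equal to the arm Client rejected in round $j-1$, and she selects $p_j,q_j$ as fresh vertices in $V\setminus\{v^*\}$, i.e.\ vertices that have never yet appeared in any offer. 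A direct count shows exactly $2j-1$ vertices are touched before round $j$, so $n-2j$ fresh vertices in $V\setminus\{v^*\}$ remain available, which is at least $2$ throughout Phase 1. Whichever offered edge Client claims, the centre and one arm join $M$, so $|M|$ grows by exactly one per round to $n/2-1$.

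In Phase 2 (rounds $n/2$ and $n/2+1$) the uncovered set is $U=\{u,v^*\}$, where $u$ is the arm rejected in round $n/2-1$; every edge incident to $v^*$ is unclaimed since $v^*$ never appeared in an offer, and the only claimed edge at $u$ is $\{u,c_{n/2-1}\}$. Waiter now fixes any $\{x,y\}\in M$ with $x,y\neq c_{n/2-1}$, which is possible for $n\geq 6$ since only the $M$-edge through $c_{n/2-1}$ is forbidden. In round $n/2$ she offers the unclaimed pair $\{u,x\},\{u,y\}$; without loss of generality Client claims $\{u,x\}$ (otherwise swap the roles of $x$ and $y$). In round $n/2+1$ she offers the unclaimed pair $\{u,v^*\},\{v^*,y\}$. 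If Client claims $\{u,v^*\}$ his graph contains the perfect matching $M\cup\{u,v^*\}$; if he claims $\{v^*,y\}$ it contains the perfect matching $(M\setminus\{x,y\})\cup\{u,x\}\cup\{v^*,y\}$. Either way Waiter wins by round $n/2+1$.

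The main technical obstacle is guaranteeing that all five edges $\{u,v^*\},\{u,x\},\{u,y\},\{v^*,x\},\{v^*,y\}$ are unclaimed at the start of Phase 2. The design of the cone strategy is engineered precisely for this: pre-reserving $v^*$ guarantees no edge at $v^*$ is ever offered in Phase 1, and the rule ``new centre equals last rejected arm'' forces every rejected vertex except the last to be matched in the very next round, leaving at most one Waiter-claimed edge incident to $u$. More naive choices, such as offering two vertex-disjoint edges on four fresh vertices each round, fail because the last Phase 1 round then necessarily places $\{u,v\}$ in Waiter's hand, ruining every two-round Phase 2 completion argument I could find.
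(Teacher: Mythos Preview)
Your proof is correct, but it follows a genuinely different route from the paper's.

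For the lower bound, the paper argues directly that if Waiter is to win in $n/2$ rounds then Client's graph after round $n/2-1$ must already be a matching of size $n/2-1$, whence only one completing edge exists and Client simply declines it in the final round. Your explicit obstruction strategy (claim any offered edge meeting $V(G_C)$) reaches the same conclusion with a bit more machinery; both are fine.

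For the upper bound the approaches diverge substantially. The paper does not work on $K_n$ directly: it first proves a stronger bipartite statement (Theorem~\ref{WCUPM}), namely that Waiter can force a perfect matching of $K_{n,n}$ within $n+1$ rounds even when up to $n/2$ edges are forbidden in advance, using a greedy Stage~I that controls $e_{W\cup H}(R)$ followed by an explicit 5-round endgame on the last $4+4$ vertices. Theorem~\ref{thm:pm_unbiased} then falls out by taking $H=\varnothing$ and playing inside a spanning $K_{n/2,n/2}$. Your argument instead stays in $K_n$ and uses a chained ``cone'' strategy with a pre-reserved vertex $v^*$, arranged so that after $n/2-1$ rounds the two uncovered vertices have at most one claimed incident edge between them; this lets you finish in just two further rounds via an edge-swap on a single matching edge $\{x,y\}$. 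Your approach is more elementary and self-contained for this particular theorem. The paper's approach, on the other hand, buys extra generality: the forbidden-edge version is reused later (Stage~III of Case~B in the proof of Theorem~\ref{thm:tree_stronger}) to complete a partial tree embedding by a bipartite perfect matching against accumulated Waiter edges, something your direct $K_n$ argument does not immediately provide.
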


Moreover, for the biased perfect matching game the fifth author and Stojakovi\'c~\cite{MS2016} proved that 
for 
$b=O\left(\frac{n}{\ln n}\right)$ the perfect matching game
is won asymptotically fast. Note that the bound on $b$
is best possible (in its order of magnitude), as for $b>(1+o(1))\frac{n}{\ln n}$ Breaker has a strategy for isolating a vertex 
in Maker's graph~\cite{CE1978}.

\begin{thm}[Theorem 1.1 and Theorem 1.5(i) in \cite{MS2016}]\label{thm:pm_mima}
There exist constants $\delta,c,C>0$ such that for every large enough $n$ and for every $b\leq \delta \frac{n}{\ln n}$
the following holds.
$$
\frac{n}{2} + c b \leq \tau_{MB}(\mathcal{PM}_n,b)\leq \frac{n}{2} + C b \ln b~ .
$$
\end{thm}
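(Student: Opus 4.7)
The plan is to prove the two inequalities separately.

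For the lower bound $\tau_{MB}(\mathcal{PM}_n, b) \geq n/2 + cb$, I would design a Breaker strategy that forces Maker's final graph to contain at least $cb$ edges outside any perfect matching. The natural attack exploits Breaker's $b$ moves per round: fix disjoint vertex sets $S$ and $T$ with $|S| = \alpha b$ and $|T| = \beta b$ satisfying $\alpha > 2\beta$, and have Breaker progressively claim every edge at each $v \in S$ except those going to $T$. To avoid an isolated vertex in her graph, Maker must eventually claim at least one edge at each $v \in S$; but once Breaker's attack on $v$ is complete, those edges can only go to $T$. A pigeonhole/Hall-type argument then shows that at least $(\alpha - \beta) b$ of Maker's edges at $S$ cannot all be pairwise endpoint-disjoint, so they cannot lie in a common matching and thus contribute the required linear waste. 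A small amount of care is needed so that Breaker's attack finishes before Maker can avoid being funnelled into $T$; this is arranged by attacking the vertices of $S$ in parallel, distributing Breaker's $b$ moves across them each round.

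For the upper bound $\tau_{MB}(\mathcal{PM}_n, b) \leq n/2 + Cb \ln b$, I would construct a two-phase Maker strategy. In Phase 1, Maker builds a partial matching greedily while defending against isolation: each round she claims an edge between two currently unmatched vertices, prioritising a vertex with the largest Breaker-degree. A potential argument of Erd\H{o}s--Selfridge type shows that, after about $n/2 - O(b)$ rounds, Maker has a waste-free partial matching of size $n/2 - O(b)$, and each of the $O(b)$ remaining unpaired vertices still has many neighbours available. In Phase 2, Maker completes the matching on these unpaired vertices via an iterative scheme: in the $j$-th sub-phase she spends $O(b)$ additional rounds using short Maker-augmenting paths (built on the fly) to match at least half of the $r_j$ currently unpaired vertices. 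Since $r_0 = O(b)$ and $r_{j+1} \le r_j/2$, only $O(\ln b)$ sub-phases are needed, so the total number of edges played in Phase 2 is $O(b \ln b)$ and thus the total game length is at most $n/2 + Cb \ln b$.

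The main obstacle will be Phase 2. After Phase 1, Breaker may have densely surrounded the unpaired vertices, so one must show that, despite Breaker's continuing $b$ moves per round, Maker can construct the augmenting paths quickly enough to halve the unpaired set in each sub-phase. The technical heart is a structural lemma asserting that the auxiliary graph on the unpaired set (with edges corresponding to length-$3$ Maker-alternating paths through Phase-1 matched pairs) still contains enough expansion for the halving to go through; tuning this argument controls both the $\ln b$ factor and the constant $C$. Once this Phase-2 ingredient is in place, the lower bound of $cb$ is comparatively straightforward, requiring only the combinatorial setup sketched above together with the observation that Maker, playing one edge per round, cannot simultaneously neutralise $\Omega(b)$ parallel isolation threats.
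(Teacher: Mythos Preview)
This theorem is not proven in the paper at all: it is quoted verbatim as ``Theorem 1.1 and Theorem 1.5(i) in \cite{MS2016}'' and serves only as background against which the authors' Waiter--Client results (Theorems~\ref{thm:pm_unbiased} and~\ref{thm:pm_biased}) are contrasted. There is therefore no ``paper's own proof'' to compare your proposal with; the result lives entirely in the cited reference of Mikala\v{c}ki and Stojakovi\'c.

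As to the content of your sketch, the outlines are reasonable in spirit but both halves are underspecified at exactly the points that matter. For the lower bound, you assert that Breaker can finish his isolation attack on each $v\in S$ before Maker escapes, but with $|S|=\alpha b$ vertices to attack in parallel and only $b$ Breaker moves per round, each vertex receives roughly one Breaker edge per round, so clearing $n-|T|$ edges at $v$ takes order $n$ rounds---far too slow to funnel Maker into $T$ before she simply matches $v$ elsewhere. The actual argument in \cite{MS2016} needs a more delicate trade-off. For the upper bound, the ``halving via augmenting paths'' scheme is plausible, but the claimed structural lemma (expansion in the auxiliary graph on unpaired vertices despite Breaker's ongoing play) is precisely the hard part and you have not indicated why such expansion survives; without it the $O(\ln b)$ iteration does not get off the ground. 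If you intend to supply a self-contained proof rather than cite \cite{MS2016}, both of these gaps would need to be closed.
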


Note that in the above statement the second order term is determined up to a logarithmic factor.
For the Waiter-Client version of that game, we can get rid of this extra factor in the upper bound
while also allowing the bias to be linear in $n$.

\begin{thm}\label{thm:pm_biased}
There exist constants $\delta,C>0$ such that for every large enough even $n$ and for every $b\leq \delta n$
the following holds.
$$
\tau_{WC}(\mathcal{PM}_n,b)\leq \frac{n}{2}+ C b~ .
$$
\end{thm}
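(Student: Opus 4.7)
The plan is to have Waiter play in three stages. Fix a balanced bipartition $V(K_n) = A \cup B$ with $|A| = |B| = n/2$, order $A = \{a_1, \ldots, a_{n/2}\}$, and reserve a subset $R \subseteq B$ of size $Cb$ for a suitable absolute constant $C$. In Stage 1, for $i = 1, \ldots, t_1$ with $t_1 := n/2 - (C+1)b$, Waiter selects vertex $a_i$ and offers $b+1$ free edges from $a_i$ to distinct currently unmatched vertices of $B \setminus R$. Since Waiter has never offered an edge at $a_i$ before, every edge incident to $a_i$ is free; and $|(B \setminus R)_{\text{unmatched}}| = n/2 - Cb - (i-1) \geq b+1$ is maintained throughout. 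Client's pick extends the matching by one, while the $b$ captured edges go to Waiter but stay inside $\{a_i\} \times (B \setminus R)$. The key invariant maintained is that no Waiter edge ever touches $R$.

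After Stage 1 there are $(C+1)b$ unmatched vertices on each side, and all edges between $A$ and $R$ are still free. In Stage 2 (safe extension), Waiter continues by picking $a$ from the unmatched $A' \subseteq A$ and offering $b+1$ free edges from $a$ to the currently unmatched vertices of $B$; this continues for $Cb$ rounds, ending with exactly $b$ unmatched vertices on each side. So far the total number of rounds is $t_1 + Cb = n/2 - b$. In Stage 3, Waiter must complete the matching on the remaining $2b$ vertices (call the unmatched sides $\tilde A$ and $\tilde B$) using augmenting paths through the current matching. By steering Stage 2 appropriately, $\tilde B$ can be arranged to lie inside $R$, so that at every step of an augmenting-path construction the current endpoint $a^{(k)} \in A$ has all its edges to $\tilde B$ still free (its only Waiter edges come from the Stage~1 round in which $a^{(k)}$ was matched, and those all go into $B \setminus R$). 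Waiter then offers $b+1$ edges from $a^{(k)}$ --- as many as possible into $\tilde B$, plus edges into matched $B$-vertices that would extend the path --- and each round either closes a path or extends it by two edges.

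The main obstacle is the Stage 3 analysis: an adversarial Client can repeatedly refuse to close the augmenting paths, so a careful amortized or potential-function argument is needed to show that the total number of rounds in Stage 3 is $O(b)$. One approach is to have Waiter commit, for each augmentation, to a pre-designated alternating path of controlled length, so that Client's ``extend'' option eventually runs out of legal continuations and the path must close; another is to track a potential counting the number of Waiter edges from the current path endpoint into $\tilde B$ and show it must be exhausted within $O(1)$ rounds per augmentation. The hypothesis $b \leq \delta n$ is used throughout to guarantee that sufficiently many free edges remain at each path endpoint, both for the Stage~2 offers and for the Stage~3 extensions, despite Stage~1 having consumed $b$ edges at each matched $a_j \in A$. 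Combining the three stages gives a total of $n/2 + C'b$ rounds for an adjusted constant $C'$, establishing the claimed bound.
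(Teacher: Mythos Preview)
Your proposal has a genuine gap at Stage~3, which you yourself identify as ``the main obstacle'' and then leave unresolved. Neither of your two sketches is worked out, and there is a real difficulty: since $|\tilde B|\le b<b+1$, every Stage~3 offer must include at least one edge that does not land in $\tilde B$, and an adversarial Client can take that edge every round. Your first sketch (a pre-designated alternating path) does not explain how Waiter forces Client onto that particular path, since Client chooses the edge; your second sketch (a potential counting Waiter-edges from the current endpoint into $\tilde B$) is exhausted after a single round at each endpoint, but there are $\Theta(n)$ possible endpoints along the way, so nothing forces the path to close in $O(1)$ rounds. A rotation argument modelled on the paper's Hamiltonicity endgame would cost $b+1$ rounds per augmentation and hence $\Theta(b^2)$ rounds for all $b$ augmentations, which is too many. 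There is also a smaller gap in Stage~2: ``steering'' $\tilde B$ into $R$ is not in Waiter's hands, because Client can preferentially accept the edges into $R$ and leave the $b$ stragglers of $B\setminus R$ unmatched.

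The paper bypasses this endgame entirely. It runs a greedy matching phase (your Stage~1, obtained by faking every second move of the path-building Stage~I from the Hamiltonicity proof) until $C_0 b$ vertices remain uncovered, observes that no edge among those $C_0 b$ vertices has been offered, and then invokes the Bednarska-Bzd\c{e}ga--Hefetz--Krivelevich--\L uczak pancyclicity theorem as a black box: with bias $b\le c\,C_0 b$, Waiter forces a Hamilton cycle (hence a perfect matching) on the leftover $K_{C_0 b}$ within at most $\binom{C_0 b}{2}/(cC_0 b+1)=O(b)$ rounds. No augmenting paths are needed; the hypothesis $b\le\delta n$ is used only to ensure $C_0 b\le n$ and to place $b$ below the pancyclicity threshold on the leftover set.
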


\medskip

{\bf Hamiltonicity game.} Another game which is easily won by Maker when played on the edges of 
a complete graph $K_n$ is the Hamiltonicity game. This time, 
let $\mathcal{H}_n$ denote the family of all Hamilton cycles of $K_n$,
and for a moment consider $n$ to be sufficiently large. 
Already in their early paper, Chv\'atal and Erd\H{o}s~\cite{CE1978}
could show that Maker wins this game in at most $2n$ rounds.
This was later improved to $\tau_{MB}(\mathcal{H}_n,1)\leq n+2$
by Hefetz, Krivelevich, Stojakovi\'c and Szab\'o~\cite{HKSS2009}
through ad-hoc arguments coupled with the P\'osa rotation technique~\cite{Posa1976}. Finally, Hefetz and Stich~\cite{HS2009} 
were fighting for the exact result and
proved that
$\tau_{MB}(\mathcal{H}_n,1)= n+1$ 
by providing a rather technical (13 pages long) proof involving multiple case distinctions.
We will show that Waiter can win the Waiter-Client version of
the unbiased Hamiltonicity game in the same number of rounds.

\begin{thm}\label{thm:ham_unbiased}
For every large enough integer $n$ the following holds:
$$
\tau_{WC}(\mathcal{H}_n,1)=n+1~ .
$$
\end{thm}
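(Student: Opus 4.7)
After $n$ rounds Client holds exactly $n$ edges; since a Hamilton cycle has $n$ edges, Client's graph contains a Hamilton cycle only if it \emph{is} one. Let $G$ denote Client's graph just before the $n$-th round, so $|E(G)|=n-1$. For $G+e$ to be a Hamilton cycle, $G$ must be acyclic and hence a spanning tree; and the only spanning tree completed to a Hamilton cycle by the addition of a single edge is a Hamilton path $v_1v_2\cdots v_n$, closed by the unique chord $v_1v_n$. Thus among the two edges Waiter offers in round $n$ at most one closes a Hamilton cycle, and Client picks the other. Hence $\tau_{WC}(\mathcal{H}_n,1)\geq n+1$.

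\textbf{Upper bound (plan).} I would use a two-phase strategy whose round budgets $\tfrac{n}{2}+1$ and $\tfrac{n}{2}$ sum exactly to $n+1$. Assume $n$ is even (the odd case is handled analogously via a near-perfect matching). In \emph{Phase 1} Waiter invokes Theorem~\ref{thm:pm_unbiased} to force, within $\tfrac{n}{2}+1$ rounds, that Client has claimed a perfect matching $M=\{f_1,\dots,f_{n/2}\}$ together with at most one ``wasted'' edge $w$. Since $w\notin M$ and $M$ is a perfect matching, $w$ joins endpoints of two \emph{distinct} matching edges, so Client's graph at the end of Phase~1 is the vertex-disjoint union of a four-vertex path (formed by $w$ and the two matching edges incident to it) and $\tfrac{n}{2}-2$ isolated matching edges — altogether $\tfrac{n}{2}-1$ path-components that span $V(K_n)$.

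\textbf{Phase 2: build a Hamilton path, then close it.} In each of the next $\tfrac{n}{2}-2$ rounds Waiter offers two unclaimed edges of $K_n$, each of which joins an endpoint-vertex of one current path-component of Client's graph to an endpoint-vertex of a \emph{different} current path-component. Such a pair is readily exhibited, because the number of inter-component endpoint--endpoint edges is $\Omega(n^2)$ while only $O(n)$ edges have been used in total. Whichever edge Client picks, two path-components merge into a single longer path and no cycle is created, so after these $\tfrac{n}{2}-2$ rounds Client's graph is a Hamilton path $P=v_1v_2\cdots v_n$. In round $n$, Waiter offers two unclaimed chords $v_1v_i,\,v_1v_j$ with $2<i<j<n$; for large $n$ a linear number of edges at $v_1$ are still free. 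Client's chosen chord $v_1v_k$ sets up a P\'osa rotation: Client's graph now contains both $P$ \emph{and} the rotated Hamilton path $v_{k-1}v_{k-2}\cdots v_1 v_k v_{k+1}\cdots v_n$, whose endpoint pair $(v_{k-1},v_n)$ is distinct from $(v_1,v_n)$. In round $n+1$ Waiter offers $\{v_1v_n,\,v_{k-1}v_n\}$; either of these edges completes a Hamilton cycle in Client's graph, so Client is forced to close one, yielding $\tau_{WC}(\mathcal{H}_n,1)\leq n+1$.

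\textbf{Main obstacle.} The principal technical difficulty is the bookkeeping of Phase~2: Waiter must guarantee at each round not only the existence of the required pair of inter-component merging edges, but also that in round $n$ two chords at $v_1$ remain free and that in round $n+1$ both $v_1v_n$ and $v_{k-1}v_n$ are unclaimed. Because the Hamilton path's endpoints $v_1,v_n$ only become determined during Phase~2a, Waiter cannot reserve $v_1v_n$ from the outset; instead she has to steer the merging choices so that both prospective endpoints retain many free incident edges, and she also needs flexibility in the chord offers in round $n$ to accommodate whichever closers remain. The $\Omega(n^2)$-vs.-$O(n)$ edge margin makes all of this possible, but the reservation argument needs to be executed carefully. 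A secondary subtlety is that Theorem~\ref{thm:pm_unbiased} is used as a black box and the location of $w$ is not known in advance; fortunately $w$ is automatically a valid first merging step and so fits seamlessly into Phase~2a.
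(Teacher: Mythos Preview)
Your lower bound is correct and essentially identical to the paper's.

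Your upper bound plan is a genuinely different route from the paper's (the paper never goes through a perfect matching; it grows four paths in parallel with a carefully choreographed alternation of two move types, then stitches them together). Unfortunately, as you yourself flag, the bookkeeping in Phase~2 is where the real work lies, and the sketch you give does not survive scrutiny.

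The claim that ``the number of inter-component endpoint--endpoint edges is $\Omega(n^2)$'' is only valid at the \emph{start} of Phase~2. With $k$ path-components remaining there are exactly $2k$ endpoints and hence $2k(k-1)$ cross-component endpoint pairs; once $k=O(\sqrt{n})$ this is already $O(n)$, and at the final merge ($k=2$) there are only four candidate edges. Nothing you have said rules out that three or four of those particular edges were already offered earlier (either in Phase~1, where Theorem~\ref{thm:pm_unbiased} is invoked as a black box and gives you \emph{no} control over Waiter's edges, or in previous rounds of Phase~2a). The same objection applies to round $n{+}1$: the two closing edges $v_1v_n$ and $v_{k-1}v_n$ are specific edges, and you have not reserved them; either could sit in Waiter's graph from Phase~1 or Phase~2a.

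The paper avoids all of this by engineering the path-building phase so that, at the moment the spanning path system is complete, one knows precisely that $d_W(v)\le 4$ everywhere, that $E_W(A_1,A_1\cup A_2)=\varnothing$, and that $e_W(A_2)\le 2$ (Observations~\ref{obs_UH1} and~\ref{obs_UH2}). These structural guarantees are exactly what make the last five ``stitching'' rounds go through, and they come from the Type~A/Type~B alternation, which repeatedly absorbs the unique bad vertex created in the previous round. If you want your matching-first approach to work, you would need an analogous mechanism: a merging rule that provably prevents Waiter's edges from accumulating among the surviving endpoints. A naive greedy merge does not do this.
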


Considering the biased Hamiltonicity game, the following has been shown.

\begin{thm}[Theorem 1.3 and Theorem 1.5(ii) in \cite{MS2016}]\label{thm:ham_mima}
There exist constants $\delta,c,C>0$ such that for every large enough $n$ and for every $b\leq \delta \left(\frac{n}{\ln^5 n}\right)^{\frac{1}{2}}$
the following holds.
$$
n + c b \leq \tau_{MB}(\mathcal{H}_n,b)\leq n + C b^2 \ln^5 b~ .
$$
\end{thm}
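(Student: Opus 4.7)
The two inequalities correspond to strategies for Breaker and Maker respectively, so I would split the proof accordingly.

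\textbf{Lower bound.} To establish $\tau_{MB}(\mathcal{H}_n,b)\geq n+cb$, the natural obstruction is degree-based: any Hamilton cycle forces every vertex to have Maker-degree exactly $2$, so delaying degree $2$ at even one vertex delays Maker's win. My plan is to fix a vertex $v$ and have Breaker play a pairing/box-style strategy concentrated at $v$, claiming one edge incident to $v$ in each round until $v$ is ``almost isolated'' in Breaker's graph. Since Maker must still reach Maker-degree $2$ at $v$, this forces Maker to commit several edges at $v$ that are not used elsewhere, translating into roughly $\Omega(b)$ wasted edges across the game. Carrying out this argument at one vertex (or, if needed, at $\Theta(1)$ disjoint vertices to amplify the loss) yields the desired $+cb$ term.

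\textbf{Upper bound.} For $\tau_{MB}(\mathcal{H}_n,b)\leq n+Cb^2\ln^5 b$ I would follow the three-phase template used for biased Hamiltonicity games. \emph{Phase I (expander).} In $n+O(b^2 \ln^5 b)$ rounds Maker constructs a graph $G$ satisfying a P\'osa-type expansion property: $|N_G(S)|\geq 2|S|$ for every $S\subseteq V(K_n)$ with $|S|\leq n/4$, together with a minimum degree of at least, say, $12$. This is done via a biased Erd\H{o}s--Selfridge potential strategy: Maker maintains a weighted sum over all ``bad'' subsets $S$ whose expansion is currently threatened, and each move she plays reduces this potential enough to compensate for the $b$ edges Breaker claims in the same round. \emph{Phase II (boosters).} Given the expansion, Maker applies P\'osa rotation-extension. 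In each step she needs to claim a \emph{booster}, i.e.~an edge whose addition lengthens her longest path or closes a Hamilton cycle; the expansion property guarantees a constant fraction of all unclaimed edges are boosters, so each booster can be grabbed in $O(b)$ rounds, and at most $O(\ln n)$ boosters are ever required, contributing $O(b\ln n)$ rounds in total.

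\textbf{Main obstacle.} The delicate step is Phase I, where one must simultaneously control the many expansion constraints and keep the wasted-edge count down to $O(b^2 \ln^5 b)$. This requires a single potential function that dominates all polynomially many ``bad events'' at once and is driven below $1$ with enough margin to account for Breaker's $b$ edges per round; iterating over the $\Theta(\ln n)$ natural scales of $|S|$, each requiring its own poly-logarithmic slack in the bias $b$, is what produces the specific $\ln^5$ factor. Balancing this overhead against the $O(b \ln n)$ cost of Phase II (and checking that neither phase interferes with the invariants of the other) is the main technical hurdle, after which combining the three phases yields the stated bound.
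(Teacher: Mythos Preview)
This theorem is not proved in the present paper at all: it is quoted verbatim from Mikala\v{c}ki and Stojakovi\'c~\cite{MS2016} (as the label ``Theorem 1.3 and Theorem 1.5(ii) in \cite{MS2016}'' indicates) and is used only as context for the paper's own Waiter--Client results. There is therefore no ``paper's own proof'' to compare against; the correct response here is simply to cite \cite{MS2016}.

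That said, your sketch as a standalone attempt has a real gap in the upper-bound phase. In the standard P\'osa rotation--extension argument, each booster increases the length of Maker's longest path by one or closes a Hamilton cycle, so in general up to $n$ boosters are needed, not $O(\ln n)$. If you want the total to be $n + O(b^2 \ln^5 b)$, the $n$ cannot be hidden in Phase~II; rather, the strategy in \cite{MS2016} first builds a long path greedily (this is where the $n$ comes from), and the extra $O(b^2 \ln^5 b)$ rounds are spent constructing an auxiliary expander on a small leftover set and then absorbing it. Your attribution of the $\ln^5$ factor to ``iterating over $\Theta(\ln n)$ scales of $|S|$'' is speculative and does not match the actual source of the polylog: it comes from the minimum-degree/expander game on the small leftover set, whose analysis carries several logarithmic losses. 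The lower-bound idea (concentrate Breaker at a single vertex to force $\Omega(b)$ wasted Maker edges) is in the right spirit, though the argument needs a careful count rather than the informal ``pairing/box'' description you give.
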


For the Waiter-Client version of that game, we are able to show a better upper bound on the number of rounds,
while again allowing the bias to be of linear size.

\begin{thm}\label{thm:ham_biased}
There exist constants $\delta,C>0$ such that for every large enough $n$ and for every $b\leq \delta n$
the following holds.
$$
\tau_{WC}(\mathcal{H}_n,b)\leq n + C b~ .
$$
\end{thm}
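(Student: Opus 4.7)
The plan is to force a Hamilton cycle in Client's graph in three phases, with total cost $n + O(b)$ rounds.

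\emph{Phase 1 (preparation, $O(b)$ rounds).} First I would have Waiter fix a small reservoir $R \subseteq V(K_n)$ of size $|R| = \Theta(b)$, and for $O(b)$ rounds offer only edges with at least one endpoint in $R$. She plays these offers according to a pairing/pseudorandom strategy in order to force Client's subgraph on $R$ to be a P\'osa-type expander and to contain a linear (in $|R|$) number of Client edges from $R$ to $V \setminus R$. In particular, for every vertex $u \in R$ there should then be many Client chords available for later P\'osa rotations of a path through $V \setminus R$.

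\emph{Phase 2 (path extension, $n - O(b)$ rounds).} Next, Waiter maintains in Client's graph a path $P$ whose vertex set lies in $V \setminus R$. In every round she picks the ``free'' endpoint $v$ of $P$ and offers Client $b+1$ unclaimed edges from $v$ to $(V \setminus R) \setminus V(P)$, forcing Client to extend $P$ by one vertex. Since Phase 1 touched only edges meeting $R$, none of the candidate edges has been played yet, so the offer is valid as long as $|(V \setminus R) \setminus V(P)| \ge b+1$; choosing the hidden constant in $|R|$ large enough, this condition holds for $n - O(b)$ rounds, at which point $P$ covers all but $O(b)$ vertices of $V$.

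\emph{Phase 3 (absorbing and closing, $O(b)$ rounds).} Let $U = V \setminus V(P)$, so $|U| = O(b)$. Waiter now uses the expander-cum-connector structure built on $R$ in Phase 1 to apply the P\'osa rotation-extension technique in Client's graph: she forces Client to splice each $u \in U$ into the path via a constant number of additional edges (using pre-planted chords for rotations and pre-planted connector edges to reach $u$), and then closes the resulting Hamilton path into a Hamilton cycle. Each absorption costs $O(1)$ rounds, so this phase contributes $O(|U|) = O(b)$ rounds, giving the overall bound $\tau_{WC}(\mathcal{H}_n, b) \le n + O(b)$.

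\emph{Main obstacle.} The crux will be Phase 1. On a board of only $\Theta(b^2)$ reservoir-incident edges, and with only $O(b)$ offers available, Waiter must force Client into a graph with enough rotation flexibility and enough connectivity to $V\setminus R$ to absorb \emph{every} possible shape of the path produced by Phase 2. I expect this to follow from a pairing-style strategy that simulates random offers of constant density on $R$, combined with an adaptation of the Hefetz--Stich rotation argument (used in the unbiased Theorem \ref{thm:ham_unbiased}) to the $b$-biased setting. Carrying this out with only a constant loss, so that the total ``rotation overhead'' stays linear in $b$ rather than growing like $b^2$ or $b\log^c b$ as in the Maker-Breaker analog (Theorem \ref{thm:ham_mima}), is where the real technical work lies.
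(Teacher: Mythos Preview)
Your three-phase plan (long greedy path on $n-O(b)$ vertices, structure on a reservoir $R$ of size $\Theta(b)$, then P\'osa rotations to close) is exactly the skeleton the paper uses. The difference is that you leave the hard part open, and the paper does not.

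\medskip

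\textbf{The gap.} You correctly identify Phase~1 as ``where the real technical work lies'' and then do not do that work: you only say you \emph{expect} a pairing/pseudorandom scheme on $R$ to produce an expander with enough rotation flexibility in $O(b)$ rounds. As stated this is not a proof. There is also a secondary problem in Phase~3: the $\leq b$ leftover vertices of $U\cap(V\setminus R)$ were never touched in Phase~1, so ``pre-planted connector edges to reach $u$'' do not exist for them; you would need a separate mechanism (or a more careful end to Phase~2) to cover $V\setminus R$ exactly.

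\medskip

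\textbf{How the paper bypasses your obstacle.} The paper reverses your order --- path first, reservoir second --- and then, instead of building an expander on $R$ by hand, it invokes the Bednarska-Bzd\c{e}ga--Hefetz--Krivelevich--\L uczak pancyclicity theorem (Theorem~\ref{thm:pancyclic}) as a black box: since $b\leq c|R|$, Waiter can force a Hamilton cycle on $K_n[R]$ in at most $\binom{|R|}{2}/(c|R|)=O(b)$ rounds. After that, only $b+2$ further rounds are needed: one edge from an endpoint $a_1$ of the long path into the cycle on $R$, then $b$ P\'osa rotations inside $V(P)$ to manufacture $b+1$ distinct candidate endpoints $v_0,\ldots,v_b$ all with a free edge to the designated vertex $x\in R$, and one final offer of $\{v_ix:0\leq i\leq b\}$ closes the Hamilton cycle. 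No absorption of stray vertices is needed, because the path covers $V\setminus R$ exactly and the Hamilton cycle covers $R$ exactly.

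\medskip

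\textbf{One technical point you are missing.} To make those $b$ rotations go through, the paper needs $d_{C\cup W}(v,V(P))$ to be $o(n)$ for the rotation endpoints. This is why, during the greedy path phase, Waiter always offers edges to the $b+1$ vertices of \emph{smallest} current Waiter-degree in $V\setminus V(P)$; this balancing keeps all degrees at $O(n/C_0)$ and is what makes the endgame work. Your Phase~2 as written (``offers Client $b+1$ unclaimed edges'') does not control this.
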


\medskip

{\bf Pancyclicity game.} 
Quite recently (in \cite{BHKL2016} and \cite{FKN2015}) it was suggested to
generalise the Hamiltonicity game even further by choosing the winning sets to be all the
{\em pancyclic} subgraphs of $K_n$ -- subgraphs containing the cycles of all possible lengths between $3$ and $n$.
Denote with $\mathcal{PC}_n$ the family of all such subgraphs.
Ferber, Krivelevich and Naves~\cite{FKN2015} proved that for $b=o(\sqrt{n})$ the $b$-biased Maker-Breaker pancyclicity game on $K_n$ is won by Maker,
while it was already known before that for $b\geq 2\sqrt{n}$ Breaker wins, since he can block all triangles~\cite{CE1978}.
In contrast to this result, it was shown by Bednarska-Bzd\c{e}ga, Hefetz, Krivelevich and \L uczak~\cite{BHKL2016} that the threshold bias in the corresponding Waiter-Client game is linear in $n$.
Apart from that, not so much is known for games with $\mathcal{PC}_n$ being the family of winning sets. 
In particular, no tight results on the number of rounds has been given before. In this paper we prove the following.

\begin{thm}\label{thm:pancyclic_fast}
In the unbiased Waiter-Client pancyclicity game the following holds: 
$$
\tau_{WC}(\mathcal{PC}_n,1) = n + (1+o(1))\log_2 n.
$$
\end{thm}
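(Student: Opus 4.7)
The plan is to establish matching (up to a $(1\pm o(1))$ factor on the $\log_2 n$ term) upper and lower bounds on $\tau_{WC}(\mathcal{PC}_n,1)$.

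\emph{Lower bound.} A pancyclic graph $G$ on $n$ vertices is connected (it contains a Hamilton cycle) and contains at least $n-2$ distinct simple cycles, one of each length in $\{3,\ldots,n\}$. The cycle space of a connected graph with $|E(G)|$ edges has dimension $|E(G)|-n+1$, and every simple cycle corresponds to a distinct nonzero vector of this space, so
$$
2^{|E(G)|-n+1}-1 \;\geq\; n-2,
$$
which gives $|E(G)| \geq n+\log_2(n-1)-1$. Since Client claims exactly one edge per round, any Waiter strategy needs at least $n+(1-o(1))\log_2 n$ rounds.

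\emph{Upper bound.} Waiter plays in two phases. In \emph{Phase 1}, Waiter applies the strategy behind Theorem~\ref{thm:ham_unbiased} for $n+1$ rounds, forcing Client's graph to contain a Hamilton cycle $C=u_1u_2\cdots u_n u_1$ (plus at most one additional edge). In \emph{Phase 2}, Waiter forces Client to claim $O(\log n)$ additional chords of $C$ that together with $C$ produce cycles of every length in $\{3,\ldots,n-1\}$. The existence of such a chord set follows from a classical Bondy-type construction: with $k=\lceil\log_2 n\rceil+O(1)$ chords placed at positions whose arc sequence on $C$ realises every length in $\{1,\ldots,n-1\}$ as a sum of consecutive arcs, every cycle length in $\{3,\ldots,n\}$ appears as a cycle using some subset of these chords and matching Hamiltonian arcs. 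To force Client to collect such chords, Waiter prepares in advance several distinct ``alternative'' chord sets, each of which would by itself complete the pancyclic structure, and in each round she offers a pair of free edges drawn from two different alternative sets so that each choice by Client realises a not-yet-present cycle length. This yields a total of $n+(1+o(1))\log_2 n$ rounds.

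\emph{Main obstacle.} The crux of the argument is the pairing step in Phase 2. One must verify that (i) throughout Phase 2 there remain enough free edges across the prepared alternative chord sets to always supply a valid pair (neither edge of the pair should have been already claimed by Waiter or Client, including during Phase 1), and (ii) for every possible sequence of Client's choices, the missing cycle lengths are all eventually produced, rather than Client consistently selecting edges that duplicate already-realised lengths. Addressing (ii) requires designing the alternative chord families so that any transversal through them produces a pancyclic graph, not only the ``canonical'' realisations. A secondary subtlety is to either exploit the extra Phase~1 chord as one of the needed chords, or to absorb its wastage into the $(1+o(1))\log_2 n$ slack; in the latter case one may slightly extend Phase 2 by a constant number of rounds without affecting the asymptotic count.
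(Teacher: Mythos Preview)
Your lower bound via the cycle space is correct. Your upper-bound outline is also right in shape, and you correctly isolate obstacle~(ii) as the crux. But you stop exactly where the work begins: ``design the alternative chord families so that any transversal through them produces a pancyclic graph'' is a restatement of the goal, not a construction. A naive attempt---take two complete Bondy-type chord sets $A,B$ and pair $a_i$ with $b_i$---fails, since a mixed selection of chords need not realise all lengths.

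The paper's solution is not to duplicate whole chord schemes but to pair chords by \emph{effect}. On a short initial segment $w_1,\ldots,w_{f(n)+1}$ of the Hamilton cycle (with $f(n)$ slowly growing), in the $i^{\text{th}}$ round Waiter offers $w_1w_{i+2}$ and $w_{f(n)-i}w_{f(n)+1}$: either choice shortens the $w_1$--$w_{f(n)+1}$ subpath by exactly $i$, so regardless of Client's choices he ends up with $w_1$--$w_{f(n)+1}$ paths of every length up to $f(n)$. For longer cycles Waiter then builds a doubling sequence $t_0<t_1<\cdots$, in each round offering two free chords from $w_{t_{i-1}}$ into a window of width about $20$ around $2t_{i-1}$; the slack in the window guarantees two free options survive, and approximate doubling is all that a subset-sum argument needs to realise every remaining length. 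Robustness comes from making the two offered edges \emph{interchangeable in their contribution}, not from parallel complete designs. You should also not wave away obstacle~(i): after Phase~1 Waiter may own up to $n+1$ edges, and the paper secures free chords by strengthening the Hamiltonicity result (Theorem~\ref{WCUHC}) to produce a long subpath with no claimed non-cycle edges and all Waiter degrees below $10$; your proposal needs a comparable device.
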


Note that this means that Waiter wins almost perfectly fast,
as every spanning pancyclic subgraph of $K_n$
has at least $n+(1-o(1))\log_2 n$ edges~\cite{Bondy1971}. Moreover, the second order term in the above theorem will be made even more precise later on (see the remark at the end of Section~\ref{sec:pancyclic}).

\medskip

{\bf Connectivity and fixed spanning trees.} Another game easily won by Maker in its unbiased version
is the so-called {\em connectivity game} on $K_n$, introduced by Chvat\'al and Erd\H{o}s~\cite{CE1978} in which Maker's goal is to claim any spanning subgraph of $K_n$. Indeed, we already discussed that
for large enough $n$, Maker has a strategy to create a Hamilton cycle asymptotically fast. Moreover,
since there is no reason for Maker to close cycles in the connectivity game, there needs to be a strategy which succeeds within
$n-1$ rounds. In fact, following the result of Lehman~\cite{Lehm1964} Maker can 
win the game
when $K_n$ is replaced by any graph consisting of two edge-disjoint spanning trees.

Due to the simplicity of the aforementioned game, Ferber, Hefetz and Krivelevich~\cite{FHK2012} introduced
a variant of the connectivity game in which Maker aims to occupy a copy of some given spanning tree $T$.
Obviously, in order to have a winning strategy for Maker, 
we 
cannot choose $T$ arbitrarily now 
as Breaker can block large stars. Thus, it is natural to put some degree constraints
on the desired tree $T$. Let $\mathcal{F}_T$ denote the family of all copies of $T$ in $K_n$.
The following result has been proven first.

\begin{thm}[Theorem 1.1 in \cite{FHK2012}]
Let $\varepsilon>0$. Then for every large enough integer $n$ the following holds. Let $T$ be any spanning tree
on $n$ vertices, with maximum degree $\Delta(T)\leq n^{0.05-\varepsilon}$, and let $b\leq n^{0.005-\varepsilon}$
be any positive integer. Then 
$$
\tau_{MB}(\mathcal{F}_T,b)=n+o(n)~ .
$$
\end{thm}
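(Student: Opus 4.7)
The plan is to split Maker's strategy into a short preparatory phase and a long embedding phase. In the preparatory phase, which uses only $o(n)$ rounds, Maker builds an auxiliary subgraph $H$ on a small vertex set $W \subseteq V(K_n)$ with strong expansion properties. In the main phase, Maker embeds $T$ vertex by vertex in roughly $n-1 + o(n)$ rounds, using $H$ as a safety net to repair bad situations created by Breaker. Since $|E(T)| = n-1$, the total of $n + o(n)$ rounds matches the trivial lower bound up to lower order terms.

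For the preparatory phase, I would fix $\eta > 0$ small relative to $\varepsilon$ and pick $W \subseteq V(K_n)$ with $|W| = n^{1-\eta}$. The aim is for Maker to claim a graph $H \subseteq K_n[W]$ satisfying a Friedman--Pippenger style expansion: every $U \subseteq W$ with $|U| \le |W|/(8\Delta(T))$ has $|N_H(U) \setminus U| \ge 4\Delta(T)\,|U|$. Such an $H$ can be built using a potential / box-game argument of the kind pioneered by Chv\'atal and Erd\H{o}s, because $b \cdot \Delta(T) = n^{0.055 - 2\varepsilon}$ is much smaller than $|W|$, so Breaker cannot destroy the expansion locally. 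The number of Maker edges needed is $O(\Delta(T)\, |W|) = n^{1.05 - \varepsilon - \eta}$, which is $o(n)$ provided $\eta > 0.05 - \varepsilon$.

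For the embedding phase, root $T$ at a vertex $r$ and process its vertices in BFS order, maintaining a partial embedding $\varphi$ of the already visited subtree $T' \subseteq T$ into Maker's graph. When processing a new child $v$ of an already embedded vertex $u$, Maker tries to claim an edge from $\varphi(u)$ to a fresh target $\varphi(v) \in V(K_n) \setminus \varphi(V(T'))$. A naive greedy choice works whenever many unclaimed edges leave $\varphi(u)$; after at most $n$ rounds Breaker has claimed in total at most $bn = n^{1.005 - \varepsilon}$ edges, so typically every vertex still has $(1-o(1))n$ unclaimed neighbours. When the greedy step does fail, either because Breaker has concentrated edges at $\varphi(u)$ or because children of $u$ are forced into $W$ by earlier decisions, Maker reroutes through the expander: she uses the expansion of $H$ to find an unclaimed edge from $\varphi(u)$ into $W$ and then continues the embedding of $T$ inside $H$.

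The main obstacle is bookkeeping the interaction between Breaker's attacks at high-degree vertices of $\varphi(T')$ and the finite reservoir in $W$. Since $\Delta(T)$ may be as large as $n^{0.05-\varepsilon}$, a single embedded vertex can require many extension edges, all of which Breaker can try to block in a single salvo. Controlling this cleanly requires choosing the BFS ordering so that each neighbourhood is fully processed before moving on, together with reserving a small buffer of vertices inside $W$ that can always absorb such attacks via the expansion of $H$; the product $b \cdot \Delta(T) \ll |W|$ is exactly the slack that makes this possible. Summing the $n-1$ successful embedding rounds, the $o(n)$ preparatory rounds, and the $o(n)$ rerouting rounds then yields $\tau_{MB}(\mathcal{F}_T, b) = n + o(n)$.
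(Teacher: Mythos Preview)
This theorem is not proved in the present paper at all: it is quoted verbatim as Theorem~1.1 of \cite{FHK2012} and serves only as background on Maker--Breaker games, to be contrasted with the paper's own Waiter--Client results (Theorems~\ref{thm:tree} and~\ref{thm:tree_stronger}). There is therefore no ``paper's own proof'' of this statement to compare your proposal against.

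For what it is worth, your outline is in the spirit of the actual argument in \cite{FHK2012}: one first builds, in $o(n)$ rounds, an auxiliary structure on a small reservoir $W$ with good expansion, and then embeds $T$ essentially greedily, falling back on the reservoir when Breaker obstructs. However, your sketch is quite loose at the two genuinely delicate points. First, claiming that Maker can build a Friedman--Pippenger type expander on $W$ against a Breaker of bias $b$ is itself a nontrivial positional-games statement and is not handled by a generic ``potential / box-game argument of the Chv\'atal--Erd\H{o}s kind''; in \cite{FHK2012} this is done via a separate minimum-degree/expander game. Second, the rerouting step --- ``Maker reroutes through the expander \ldots\ and then continues the embedding of $T$ inside $H$'' --- is where all the real work lies: one has to show that a bounded-degree tree can be embedded on-line into a sufficiently good expander even while Breaker is deleting edges, and this requires the Friedman--Pippenger / Haxell machinery together with careful bookkeeping, not just the inequality $b\cdot\Delta(T)\ll |W|$. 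As written, your proposal identifies the right architecture but does not supply either of these two ingredients.
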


Thus, even when the maximum degree and Breaker's bias are increasing with $n$, Maker has a strategy to win the fixed spanning tree game asymptotically fast. Naturally, one may wonder whether the error term in the above theorem can be improved
when we put stronger constraints on $\Delta(T)$ and $b$. This was answered in~\cite{CFGHL2015}
as follows.

\begin{thm}[Theorem 1.1 and Theorem 1.4 in~\cite{CFGHL2015}]
Let $\Delta$ be a positive integer, then for every large enough integer $n$ the following holds.
For every spanning tree $T$ on $n$ vertices with maximum degree $\Delta(T)\leq \Delta$,
we have
$$
n-1 \leq \tau_{MB}(\mathcal{F}_T,1) \leq n+1~ .
$$
Moreover, if $T$ is a tree chosen uniformly at random among all labeled trees on $n$ vertices (not necessarily
having a constant bound on the maximum degree), then with high probability
$$
\tau_{MB}(\mathcal{F}_T,1) = n-1~ .
$$
That is, for most choices of $T$, Maker wins the tree embedding game perfectly fast.
\end{thm}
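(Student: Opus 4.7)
The lower bound $\tau_{MB}(\mathcal{F}_T, 1) \geq n-1$ is immediate, since every spanning tree of $K_n$ has exactly $n-1$ edges and Maker claims at most one edge per round. The work lies entirely in the upper bounds, and in sharpening them for a typical $T$. The plan is to present two distinct strategies: (a) a worst-case strategy for bounded-degree $T$ producing at most two wasted edges; (b) a leaf-economical strategy exploiting the structure of a uniformly random labeled tree to produce zero waste with high probability.

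\textbf{Strategy (a).} I would root $T$ at an arbitrary vertex and extend a partial embedding $\phi : V(T') \to V(K_n)$ of a growing subtree $T' \subseteq T$ in a top-down, breadth-first order. In each round Maker chooses an unembedded vertex $v$ whose parent $u$ is already embedded and claims a safe edge from $\phi(u)$ to a still-unused vertex of $K_n$. Since $\Delta(T) \leq \Delta$, each image vertex $\phi(u)$ is asked to emit at most $\Delta$ such edges in total, so it suffices to guarantee a small ``reserve'' of free edges at $\phi(u)$ at all times. I would track this via a potential on the unembedded boundary, and whenever the potential at some $\phi(u)$ becomes critical, Maker temporarily redirects her move to another safe boundary vertex; the bounded maximum degree ensures that Breaker cannot endanger too many positions at once. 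The two extra edges in the bound $n+1$ should arise from the opening move (made without knowledge of Breaker's intentions) and from at most one corrective move during the embedding. I expect the main technical obstacle to be showing that no \emph{third} correction is ever forced, which likely requires a careful case analysis of the configurations by which Breaker can simultaneously poison several boundary images.

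\textbf{Strategy (b).} For a uniformly random labeled tree $T$ I would exploit two classical facts that hold with high probability: $T$ has $(1+o(1))\,n/e$ leaves, and $\Delta(T) = O(\log n / \log \log n)$. Let $S \subseteq V(T)$ denote the non-leaf vertices, so $|S| = (1 - 1/e + o(1))\, n$, and split the embedding into two phases. In Phase~1, embed the skeleton $T[S]$ via strategy~(a), permitting up to two wasted edges; the crucial trick is that Maker arranges each wasted edge to land between $\phi(u)$ and a previously-unused vertex $w$, so that in Phase~2 she can set $\phi(\ell) = w$ for some leaf $\ell$ of $u$, upgrading the waste into a genuine tree edge. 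In Phase~2 she iterates through the $\Theta(n)$ remaining leaves of $T$ and attaches each to the image of its parent; since there are linearly many candidate endpoints per parent and Breaker has only linearly many claimed edges, a waste-free completion exists by a straightforward counting argument. The principal obstacle is the retroactive bookkeeping in Phase~1: Maker must, at the moment she is forced to make a wasted move, select an endpoint $w$ that will eventually need to host a leaf child of the corresponding $u$. I would formalise this via a hitting-set argument that exploits the uniform distribution of leaf positions in a random labeled tree, using the fact that whp almost every non-leaf of $T$ has at least one leaf child.
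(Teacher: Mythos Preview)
This theorem is not proved in the present paper at all: it is quoted verbatim from \cite{CFGHL2015} (Clemens, Ferber, Glebov, Hefetz and Liebenau) as background and motivation for the paper's own Waiter--Client analogue, Theorem~\ref{thm:tree}. There is therefore no ``paper's own proof'' to compare your proposal against; the authors simply cite the result and move on.

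That said, your sketch diverges substantially from how \cite{CFGHL2015} actually proceeds, as can be inferred from the present paper's Section~\ref{sec:trees}, which explicitly adapts those ideas. The key structural ingredient there is a dichotomy lemma (Lemma~\ref{lem:tree-property} here, modelled on Krivelevich's lemma): a bounded-degree tree either contains a long \emph{bare path} or has many leaf-neighbours supporting a large matching of leaf edges. One first embeds the tree minus this ``nice'' substructure greedily without waste, and then completes the embedding by invoking a fast Hamilton-path or perfect-matching strategy on the remaining vertices, which is where the at-most-two extra moves arise. Your proposal instead attempts a direct top-down embedding with a potential/boundary argument and ad hoc corrections; this does not exploit the bare-path/leaf-matching structure and, as you yourself flag, leaves the crucial ``no third correction'' step as an unexplained case analysis. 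For part~(b), the actual argument in \cite{CFGHL2015} again hinges on structural properties of random labeled trees combined with the same embedding machinery, rather than on retroactively reassigning wasted edges to leaves. If you want to reconstruct the real proof, the decomposition in Section~\ref{sec:trees} of this paper is the right template.
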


In our paper we will show that in the unbiased Waiter-Client game, Waiter has a fast winning strategy
in the game $(E(K_n),\mathcal{F}_T)$ which creates at most one wasted edge.
Moreover, in contrast to the above theorems we may also allow the maximum degree to grow much faster with $n$.

\begin{thm}\label{thm:tree}
There exists a constant $\varepsilon>0$ such that the following holds for every large enough integer $n$.
Let $T$ be spanning tree of $K_n$ with $\Delta(T)\leq \varepsilon \sqrt{n}$, then
$$
n-1 \leq \tau_{WC}(\mathcal{F}_T,1) \leq n .
$$
Moreover, the lower and the upper bounds are tight, surprisingly also for $\Delta(T)\leq 3$.
\end{thm}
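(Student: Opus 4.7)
The lower bound $\tau_{WC}(\mathcal{F}_T,1)\ge n-1$ is immediate, since Client claims one edge per round and a copy of $T$ has $n-1$ edges. For the upper bound, my plan is to describe an explicit adaptive embedding strategy for Waiter. Fix a leaf-peeling order $v_1,\ldots,v_n$ of $V(T)$ --- so $v_i$ is a leaf of $T_i:=T[\{v_1,\ldots,v_i\}]$ and its unique neighbour in $T_{i-1}$ is denoted $p(v_i)$ --- and arrange additionally that $v_{n-1}$ and $v_n$ are both leaves of $T$ with distinct parents (possible since $\Delta(T)\le\varepsilon\sqrt n$ rules out $T$ being a star once $n$ is large). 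Waiter sets $\phi(v_1)$ arbitrarily, and in each round $i\in\{2,\ldots,n-2\}$ she picks two unused vertices $x,y$ such that both edges $\phi(p(v_i))x$ and $\phi(p(v_i))y$ are unclaimed, and offers this pair; the unused endpoint of whichever edge Client picks becomes $\phi(v_i)$.

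The feasibility of each such round reduces to a degree count at $u:=\phi(p(v_i))$: the number of already-claimed edges at $u$ is bounded by $2\Delta(T)$ contributed by the ``local'' pairs offered when embedding ancestors or siblings, plus one for each earlier round in which $u$ happened to serve as the partner vertex $y$ of some pair. Choosing partners in a balanced manner --- for instance round-robin over the currently unused vertices, while excluding two pre-selected \emph{reserved} vertices saved for the endgame --- guarantees that no vertex accumulates more than $O(\sqrt n)$ claimed edges throughout the entire game. Hence in every round $i\le n-2$ there remain $\Omega(n)$ candidate pairs at $u$, and the required $x,y$ exist as long as $\varepsilon$ is small enough. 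After round $n-2$, Client's graph is exactly $\phi(T_{n-2})$ and precisely two unused vertices $w,w'$ remain.

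The delicate part is the last two rounds, where only two candidate images remain for the two leaves $v_{n-1},v_n$. In round $n-1$ Waiter offers the pair $(\phi(p(v_{n-1}))w,\phi(p(v_{n-1}))w')$, which fixes $\phi(v_{n-1})$ to be the unused endpoint of Client's choice while sacrificing the partner edge to Waiter. Only one unused vertex $w^*$ then remains, and since a single edge cannot be forced, Waiter offers a pair of the form $(\phi(p(v_n))w^*,e)$ for a carefully chosen ``back-up'' edge $e$: the point is that if Client picks $e$ instead of the desired $\phi(p(v_n))w^*$, the resulting $n$-edge Client graph should still contain a copy of $T$, after a local swap of the embedding that exploits $p(v_{n-1})\ne p(v_n)$ and the abundance of unclaimed edges guaranteed by the partner-management scheme. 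Engineering $e$ so that this swap always succeeds --- compatibly with all Waiter-edges accumulated earlier --- is the main technical obstacle, and is precisely where the quantitative bound $\Delta(T)\le\varepsilon\sqrt n$ buys enough ``room'' to proceed.

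For the tightness of both bounds, even for $\Delta(T)\le 3$: the lower bound $n-1$ is attained for trees on which Waiter's strategy above happens to terminate without needing the last-round sacrifice (Client's optimal play cooperates with the embedding), while the upper bound $n$ is forced for certain other $\Delta(T)\le 3$ trees by an explicit Client strategy which systematically picks, in the last round, the back-up edge $e$ and, by mirroring Waiter's earlier offers, blocks every possible embedding-swap. Pinpointing such a concrete obstruction tree together with the matching Client strategy is a delicate case analysis that I expect to be the most intricate combinatorial step of the tightness argument.
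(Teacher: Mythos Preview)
Your approach has a genuine gap at exactly the point you flag as ``the main technical obstacle''. Count the rounds: with $\phi(v_1)$ placed for free, rounds $i=2,\ldots,n-2$ plus the two final rounds give $n-1$ rounds in total, so after the last round Client holds $n-1$ edges (your ``$n$-edge'' is a slip). You are therefore implicitly claiming $\tau_{WC}(\mathcal{F}_T,1)\le n-1$ for every admissible $T$, and this is false. Concretely, take $T$ to be a path on $n-4$ vertices with two extra leaves attached to each endpoint (so $\Delta(T)=3$), and let $v_n$ be one of these four leaves. Then $T-v_n$ is rigid in the sense that the \emph{only} edge whose addition to a copy of $T-v_n$ (plus one isolated vertex $w^*$) produces a copy of $T$ is the edge reattaching $w^*$ to the image of $p(v_n)$. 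Hence no back-up edge $e$ exists in your final round, and Client simply declines $\phi(p(v_n))w^*$. Your scheme must budget one wasted move somewhere, and deciding where and how to spend it --- without losing control of the embedding --- is the entire difficulty, not a detail to be filled in later.

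The paper avoids a delicate endgame altogether via a structural reduction. A lemma shows that every tree with $\Delta(T)\le\varepsilon\sqrt n$ contains either a bare path of length $\Omega(\sqrt n)$ or a matching of $\Omega(\sqrt n)$ leaf-edges. Waiter first embeds $T$ minus this substructure greedily; here the endgame is harmless because $\Omega(\sqrt n)$ vertices remain unassigned throughout, so the two-free-edges requirement is never tight. She then fills in the missing bare path or leaf-matching by invoking the Hamilton-cycle strategy of Theorem~\ref{WCUHC} or the bipartite perfect-matching strategy of Theorem~\ref{WCUPM}, each of which has already been engineered to waste exactly one round. On tightness, both of your sketches miss the mark. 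The lower bound $n-1$ is witnessed by the Hamilton path: Waiter forces it in $n-1$ rounds against \emph{every} Client (a by-product of Theorem~\ref{WCUHC}), not because Client ``cooperates''. The upper bound $n$ is witnessed by the double-broom above via the two-line uniqueness argument just given --- no elaborate Client strategy or case analysis is needed.
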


{\bf $H$-factor game.} Using our methods from the fixed spanning tree game, we are also able to describe
fast winning strategies for games in which Waiter aims to create a factor of a fixed constant size tree.
Note that the same kind of question was studied in the 
Maker-Breaker setting, but
only in the case when the fixed tree is either a path or a star~\cite{CM2018}. 

More precisely, for a fixed graph $H$ and an integer $n$ satisfying $v(H)|n$, 
an {\em $H$-factor} of $K_n$ is defined to be the vertex disjoint union of 
copies of $H$ covering all vertices of $K_n$. Let $\mathcal{F}_{n,H-fac}$ be the family of all such 
subgraphs. We prove the following result:

\begin{thm}\label{thm:tree_factor}
Let $k\geq 2$ be a positive integer and let $T$ be any fixed tree on $k$ vertices.
Provided that $n$ is a large enough integer with $k|n$, the following holds: 
$$
\frac{k-1}{k}n \leq \tau_{WC}(\mathcal{F}_{n,T-fac},1) \leq \frac{k-1}{k}n + 1 .
$$
Moreover, the lower and the upper bound are tight.
\end{thm}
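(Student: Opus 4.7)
Every $T$-factor of $K_n$ is a disjoint union of $n/k$ copies of $T$, each contributing $k-1$ edges, so it contains exactly $\frac{(k-1)n}{k}$ edges. Since Client must claim at least this many edges to end up with a $T$-factor, we get $\tau_{WC}(\mathcal{F}_{n,T-fac},1) \geq \frac{(k-1)n}{k}$.

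\textbf{Upper bound.} The plan is an adaptive Waiter strategy based on the following block decomposition. Fix an ordered partition $V(K_n) = B_1 \sqcup \cdots \sqcup B_{n/k}$ into blocks of size $k$, and process the blocks sequentially, forcing Client to claim a copy of $T$ inside each block by a local routine that uses $k-1$ rounds per block, plus a single ``$+1$'' overhead absorbed at the very end. The local routine is designed by induction on the number of vertices of the tree: pick a leaf $\ell$ of $T$ with unique neighbor $u$, and in the first local round inside a block $B$ offer a pair $\{b_0 b_1, b_0 b_2\}$ of intra-block edges sharing the endpoint $b_0 \in B$. Whichever edge Client claims, say $b_0 b_1$, commits $b_0$ to be the image of $u$ and $b_1$ to be the image of $\ell$, while Waiter takes the other edge. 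Apply the inductive strategy on the $(k-1)$-vertex set $B\setminus\{b_1\}$ and the tree $T' = T - \ell$ (with $b_0$ as the designated image of $u$), using $k-2$ further rounds to embed $T'$ on $B \setminus \{b_1\}$; together with the edge $b_0 b_1$, this produces a copy of $T$ on $B$ in $k-1$ rounds.

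\textbf{Main obstacle.} The delicate point is the base of the induction. For $k = 2$, the game reduces to the perfect matching game, for which Theorem~\ref{thm:pm_unbiased} gives exactly $n/2 + 1$ rounds, and this is the source of the unavoidable ``$+1$'' in the general upper bound. For $k \geq 3$ one still needs to control the local routine carefully: whenever the inductive call must force Client to claim a single specific intra-block edge, Waiter has to pair that edge with some ``buffer'' edge that is not inside the current block. The technical work lies in choosing the buffer edge to be an edge of the \emph{next} block $B_{i+1}$ in such a way that, if Client claims the buffer instead of the needed edge, one can re-partition $\{B_i, B_{i+1}\}$ and continue without loss. A careful amortization shows that across the whole game at most one extra round is wasted, yielding the bound $\frac{(k-1)n}{k} + 1$.

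\textbf{Tightness.} Both bounds are attained for suitable trees. The upper bound is tight for $T = K_2$ (so $k = 2$) by Theorem~\ref{thm:pm_unbiased}. For the lower bound, consider the star $T = K_{1,k-1}$ on $k \geq 3$ vertices: Waiter preselects $n/k$ centers $c_1, \ldots, c_{n/k}$ and, for each center $c_i$ in turn, plays $k-1$ rounds offering in each round a pair $\{c_i x, c_i y\}$ where $x, y \in V(K_n)$ are two vertices that have not yet been claimed as leaves of any previously constructed star. Each such round awards Client one leaf-edge of $c_i$'s star and Waiter one ``burned'' $c_i$-edge; since enough fresh vertices remain at each stage when $n$ is large, this produces a $K_{1,k-1}$-factor in exactly $\frac{(k-1)n}{k}$ rounds with no wasted edges.
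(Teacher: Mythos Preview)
Your lower bound and the upper-bound tightness via $T=K_2$ are fine. The remaining two parts both have genuine gaps.

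\textbf{Upper bound.} The leaf-removal induction breaks down because after your first round the Waiter edge $b_0b_2$ sits \emph{inside} the set $B\setminus\{b_1\}$ on which you recurse. For $T=K_{1,k-1}$ this is fatal: with your convention ``$b_0$ is the image of $u$'' the vertex $b_0$ is the centre, so completing the star requires the Client edge $b_0b_2$, which Waiter already owns. Swapping roles (making the shared endpoint the leaf) does not save the argument either: after two rounds on a $K_{1,3}$-block you are forced to make Client take both remaining centre edges, and one of them is always Waiter's. More generally, your ``inductive strategy on $B\setminus\{b_1\}$'' is not the statement you are inducting on (a single rooted copy on a prescribed vertex set with a pre-existing Waiter edge is a different game from a $T'$-factor on $K_m$), and the buffer/amortisation claim that only one extra round is ever spent is asserted but not argued. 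The paper sidesteps all of this by a global trick: it forms the tree $H'$ on $n+1$ vertices obtained from the $T$-factor by adding one new vertex $v'$ joined to one vertex of each component, then applies Theorem~\ref{thm:tree_stronger} on $K_{n+1}$ with $v=v'$ and $p=p'$; property~(b) there guarantees that every round touching $p'$ offers two $p'$-edges, so Waiter simply skips those rounds and plays the rest on the real $K_n$, yielding a $T$-factor in at most $\frac{k-1}{k}n+1$ rounds.

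\textbf{Lower-bound tightness.} Your star strategy runs out of room. Each round turns exactly one non-centre vertex into a leaf, so just before the very last round there is precisely one non-leaf non-centre vertex left, while you need two such vertices $x,y$ to offer $c_{n/k}x,\,c_{n/k}y$. (Concretely, for the last centre $c_{n/k}$ at its last round, the number of available vertices is $\frac{(k-1)n}{k}-\bigl((\tfrac{n}{k}-1)(k-1)+(k-2)\bigr)-(k-2)=3-k\le 0$ for $k\ge 3$.) Hence stars do not witness the lower bound with this strategy. The paper instead takes $T$ to be a path on $k$ vertices for large $k$: split $V(K_n)$ into $n/k$ parts of size $k$ and, on each part, use the Hamilton-path portion of Theorem~\ref{WCUHC} to force a spanning path in exactly $k-1$ rounds, for a total of $\frac{k-1}{k}n$ rounds.
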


Observe that in all the games considered so far, Waiter can always win at least asymptotically fast.
We finish the paper by giving an example where this is not the case and challenge the reader to
improve our bounds. 
We consider the {\em triangle factor game}, whose Maker-Breaker version has been discussed in~\cite{ABKNP2017} and \cite{FHK2012}.

\begin{thm}\label{thm:triangle_factor}
For every large enough integer $n$ such that $3|n$ the following holds:
$$
\frac{13}{12}n \leq \tau_{WC}(\mathcal{F}_{n,K_3-fac},1) \leq \frac{7}{6}n + o(n) .
$$
\end{thm}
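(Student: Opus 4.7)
For the upper bound $\frac{7n}{6}+o(n)$, my plan is to partition the $n$ vertices of $K_n$ into $\lfloor n/6 \rfloor$ disjoint blocks of $6$ vertices, absorbing any leftover of at most $5$ vertices into the $o(n)$ term via a wasteful $O(1)$-round subroutine. The key local claim is that on any block of $6$ vertices, Waiter has a strategy using exactly $7$ rounds that forces Client to claim two vertex-disjoint triangles on the block's vertices. Summing over the blocks then yields the desired total of $\frac{7n}{6}+o(n)$ rounds. To design the block strategy, I would first have Waiter force Client into a $P_3$-factor on the $6$ vertices (mimicking the argument behind Theorem~\ref{thm:tree_factor} with $T=P_3$), and then spend the remaining rounds pairing the two ``closing edges'' of the resulting $P_3$'s with carefully chosen neutral partners so that the two triangles become unavoidable. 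Since $K_6$ has only $15$ edges and a high degree of symmetry, verifying the block strategy reduces to a finite case analysis that can in principle be checked by hand or by computer.

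For the lower bound $\frac{13n}{12}$, my plan is to describe an explicit Client strategy keeping his graph free of a triangle factor for at least $\lceil \tfrac{13n}{12}\rceil - 1$ rounds. The natural tool is a potential function measuring how far Client's current graph $G$ is from containing a triangle factor, for instance
\begin{equation*}
\Phi(G) \;=\; \min\bigl\{\, e(H) - e(G) \;:\; G \subseteq H \subseteq E(K_n),\ H \text{ contains a triangle factor}\,\bigr\},
\end{equation*}
which satisfies $\Phi(G)=0$ exactly when $G$ already contains a triangle factor. Client's rule is then greedy: upon being offered a pair $\{e_1, e_2\}$, he claims the edge $e_i$ maximising $\Phi(G\cup\{e_i\})$. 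The key estimate to establish is that over the first $\lceil \tfrac{13n}{12}\rceil - 1$ rounds the potential decreases by at most $\tfrac{12}{13}$ per round on average; equivalently, that in a constant fraction of the rounds neither offered edge lies in any minimal triangle-factor extension of $G$, so that Client's choice can instead be redirected to build up local obstructions (for example an induced $C_4$, a $K_4$ minus an edge, or a vertex with an independent neighbourhood in $G$) that force extra edges in every triangle-factor extension. A standard amortised charging argument would then convert this into the claimed bound $\tau_{WC}(\mathcal{F}_{n,K_3-fac},1) \geq \tfrac{13n}{12}$.

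I expect the main obstacle to be pinning down the precise constant $\tfrac{13}{12}$ in the lower bound. The upper bound reduces to a finite problem on $K_6$ and is largely bookkeeping. For the lower bound, however, one has to identify the ``right'' family of local obstructions that Client will consistently build up over the course of the game, and then quantify precisely how many extra edges each such obstruction forces in any triangle-factor supergraph of his graph. Matching this extra-edge cost to exactly $\tfrac{1}{12}$ per vertex (rather than merely to some $\Omega(1)$ or to $\tfrac{1}{12}-\varepsilon$) is where the sharp extremal combinatorics of triangle-factor supergraphs will enter, and this is the step I expect to be the most delicate.
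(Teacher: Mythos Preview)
Your upper-bound plan contains a fatal error: the local claim that Waiter can force two vertex-disjoint triangles on a block of six vertices is false. Two disjoint triangles on six vertices constitute a triangle factor of $K_6$; there are exactly $\binom{6}{3}/2 = 10$ such factors, each with six edges, and since
\[
\sum_{F} 2^{-|F|} \;=\; 10 \cdot 2^{-6} \;=\; \tfrac{5}{32} \;<\; 1,
\]
the standard weight-function criterion for Waiter--Client games (the Erd\H{o}s--Selfridge analogue) guarantees that Client has a strategy avoiding \emph{every} triangle factor of $K_6$. So no block strategy on six vertices can succeed, in seven rounds or in any number of rounds, and the ``finite case analysis on $K_6$'' you defer is not bookkeeping but an impossibility. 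Your concrete suggestion also visibly fails: after a $P_3$-factor $(a,b,c),(d,e,f)$, if Waiter pairs the closing edges $ac$ and $df$, Client takes one and sends the other to Waiter, killing the \emph{only} triangle factor compatible with the triangle just closed. The paper instead works with batches of \emph{twelve} vertices: in seven rounds Waiter forces two disjoint triangles covering only six of the twelve, while ensuring that no edge among the remaining six has been offered, so those six can be recycled into later batches. A clique of constant size, built in $O(1)$ preliminary rounds on a fixed vertex set, absorbs the final $O(1)$ leftover vertices.

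Your lower-bound plan is not wrong in spirit but is far from a proof: the potential $\Phi(G)$ you propose can drop by $1$ in any round in which an offered edge lies in some optimal extension, so bounding the \emph{average} drop by $12/13$ requires an actual mechanism, and ``standard amortised charging'' does not name one. The paper's argument is concrete and short. Client's rule is: never close a triangle if at least one offered edge does not close one; when both offered edges would close triangles (say through apex vertices $z_1,z_2$), mark an as-yet-unmarked $z_i$ and take the other edge. Then every marked vertex $m$ satisfies $d_C(m)\ge 3$ (it already had the two cherry edges whose closing edge Client refused, and it must also lie in some triangle of the final factor), and every triangle of the factor whose three vertices are all unmarked was completed in a round that simultaneously marked a vertex, so $|M|$ is at least the number $|U|$ of such triangles. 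A degree-sum count, splitting on whether $|U|\ge n/6$, then yields $e(C)\ge 13n/12$ directly.
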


We note that the above game again provides an example where Waiter can win
(asymptotically) at least as fast as Maker in the corresponding
Maker-Breaker version. Indeed, as was observed by Szab\'o (a proof is contained in~\cite{FHK2012}),
Maker cannot win the unbiased triangle factor game on $K_n$ within less than $\frac{7}{6}n$ rounds.

\medskip\medskip

{\bf Organisation of the paper.}
In Section~\ref{sec:pm} and Section~\ref{sec:ham} we focus on Theorem~\ref{thm:pm_unbiased} and Theorem~\ref{thm:ham_unbiased}, proving slightly stronger statements
so that we can use those as tools for the proofs in later sections. In Section~\ref{sec:pancyclic} we 
prove Theorem~\ref{thm:pancyclic_fast}, in Section~\ref{sec:trees}
we show Theorem~\ref{thm:tree} and Theorem~\ref{thm:tree_factor}, in Section~\ref{sec:triangles}
we continue with the proof of Theorem~\ref{thm:triangle_factor}, and in Section~\ref{sec:biased}
we prove Theorem~\ref{thm:pm_biased} as well as Theorem~\ref{thm:ham_biased}. Whenever we use results or methods from other papers, we will introduce the necessary concepts
in the relevant sections. Finally, we finish the paper with a few concluding remarks and open problems in Section~\ref{sec:concluding}.

\medskip

{\bf Notation.}
The graph-theoretic notation that we use is standard 
and closely follows the notation from~\cite{W2001}.
We write $[n]:=\{k\in\mathbb{N}:~ 1\leq k\leq n\}$ for every positive integer $n$. 

Given any graph $G$, we let $V(G)$ and $E(G)$
denote the vertex set and the edge set of $G$, respectively,
and set $v(G)=|V(G)|$ and $e(G)=|E(G)|$.
If $\{v,w\}\in E(G)$ is an edge, 
we denote it with $vw$ for short,
and we call $w$ a neighbour of $v$. We set
$N_G(v)=\{w\in V(G):~ vw\in E(G)\}$ to be the neighbourhood of $v$ in $G$ and call $d_G(v)=|N_G(v)|$ the degree of $v$ in $G$.
Moreover, $\Delta(G)=\max_{v\in V(G)} d_G(v)$
denotes the maximum degree of $G$ 
and $\delta(G) = \min_{v\in V(G)} d_G(v)$ denotes the minimum degree of $G$.
Given any two subsets $A,B\subset V(G)$ and any vertex $v\in V(G)$
we write 
$N_G(v,A)=N_G(v)\cap A$,  
$d_G(v,A)=|N_G(v,A)|$,
$N_G(A):=\bigcup_{v\in A} N_G(v)$, 
$E_G(A):=\{vw\in E(G):~ v,w\in A\}$,
$e_G(A):=|E_G(A)|$,
$E_G(A,B):=\{vw\in E(G):~ v\in A,w\in B\}$
and 
$e_G(A,B)=|E_G(A,B)|$.

For any two graphs $H$ and $G$ we write $H\subset G$
if both $V(H)\subset V(G)$ and $E(H)\subset E(G)$ hold,
and call $H$ a subgraph of $G$ in this case.
We also set $G\setminus H=(V(G),E(G)\setminus E(H))$ in this case.
Given any $A\subset V(G)$, we call $G[A]=(A,E_G(A))$ 
the subgraph of $G$ induced by $A$.

Two graphs $H$ and $G$ are called isomorphic, denoted with $H\cong G$,
if there exists a bijection $f:V(H)\rightarrow V(G)$
such that $vw$ is an edge of $H$ if and only if
$f(v)f(w)$ is an edge of $G$. If the latter is the case,
we also say that $H$ forms a copy of $G$. 

If we represent a path $P$ by a sequence $(v_1,v_2,\ldots,v_k)$,
this means that $V(P)=\{v_1,v_2,\ldots,v_k\}$ and
$E(P)=\{v_iv_{i+1}:~ 1\leq i\leq k-1\}$ hold. 
Similarly, if a cycle $C$ is represented by
a sequence $(v_1,v_2,\ldots,v_k)$,
we mean that $V(C)=\{v_1,v_2,\ldots,v_k\}$ and
$E(C)=\{v_iv_{i+1}:~ 1\leq i\leq k-1\}\cup \{v_kv_1\}$. 
The length of both a path and a cycle is always the number of its edges.

Assume that some Waiter-Client game is in progress. 
We let $W$ and $C$ denote the graphs consisting of Waiter's edges and 
Client's edges, respectively.
Any edge belonging to $C\cup W$ is said to be claimed,
while all the other edges in play are called free.

\section{Unbiased perfect matching game}\label{sec:pm}

In the section we will prove Theorem~\ref{thm:pm_unbiased} 
by showing a slightly stronger statement which will also be applied later in the discussion of the tree embedding game (Section~\ref{sec:trees}).

\begin{thm}\label{WCUPM}
For large enough $n$, the following holds: Let $H\subset K_{n,n}$ be any subgraph with $e(H)\leq \frac{n}{2}$. Then, in the unbiased Waiter-Client game on $K_{n,n}\setminus H$, Waiter has a strategy to force a perfect matching of 
$K_{n,n}$ within $n+1$ rounds.
\end{thm}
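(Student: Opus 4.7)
The plan is to prove the statement by induction on $n$, where the inductive step uses two ``preparation'' rounds to reduce to a fresh instance on $K_{n-2,n-2}$ and the base case is handled directly for some constant $n_0$. Write $A, B$ for the two sides of $K_{n,n}$ throughout.

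For the inductive step, in the first round Waiter picks $a_1\in A$ incident to at least one edge of $H$ whenever $H\neq\emptyset$, and offers two distinct edges $a_1b_1,a_1b_2\notin H$. This is possible since $d_H(a_1)\leq e(H)\leq n/2$, leaving $n-d_H(a_1)\geq 2$ valid choices for each $b_j$ when $n$ is large. Client claims one of them, matching $a_1$ to some vertex $b$. In the second round, Waiter picks any $a_2\in A\setminus\{a_1\}$ and offers two edges $a_2b',a_2b''\notin H$ with $b',b''\in B\setminus\{b\}$; Client claims one, matching $a_2$ to some $b^{\ast}$. Now let $A'=A\setminus\{a_1,a_2\}$, $B'=B\setminus\{b,b^{\ast}\}$, and $H'=H\cap(A'\times B')$. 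The four edges claimed so far are incident to $\{a_1,a_2\}$, so no edge of $K_{A',B'}$ has been claimed; the residual game on $K_{A',B'}\setminus H'$ is therefore a clean instance of the same problem with parts of size $n-2$. Since $a_1$ was chosen as an endpoint of an $H$-edge whenever $H\neq\emptyset$, at least one edge of $H$ is removed, giving $e(H')\leq e(H)-1\leq n/2-1=(n-2)/2$. By the induction hypothesis, Waiter has a strategy on $K_{A',B'}\setminus H'$ forcing a perfect matching of $K_{A',B'}$ within $(n-2)+1=n-1$ rounds, and together with the 2-edge matching built in rounds 1 and 2 this yields a perfect matching of $K_{n,n}$ in at most $n+1$ rounds.

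The only delicate points are verifying the base case of the induction, which can be done by a direct pairing-type strategy for some small constant $n_0$, and checking the degree bound $d_H(v)\leq n/2$ used in the first two rounds, which is immediate from $e(H)\leq n/2$ for $n$ large. The key idea that makes the induction work is the choice of $a_1$ to be incident to an edge of $H$: this is exactly what guarantees that the ``budget'' $e(H')\leq(n-2)/2$ is preserved after the reduction, even when $e(H)$ is as large as $n/2$.
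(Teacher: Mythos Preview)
Your inductive step is correct and is in fact the paper's own Stage~I, phrased recursively: the paper also matches one $A$-vertex per round by offering two free edges from it into the unmatched part of $B$, does this for $n-4$ rounds, and then finishes with an explicit five-round endgame on the remaining $4+4$ vertices (Stage~II). So the architecture is the same.

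The substantive difference, and the gap in your argument, is at the base case. You choose $a_1$ merely to be incident to \emph{some} edge of $H$, which removes at least one forbidden edge per two rounds and just barely preserves the hypothesis $e(H')\le(n-2)/2$. Consequently your base case is $K_{n_0,n_0}\setminus H$ with $e(H)\le n_0/2$ --- the full-strength statement at a fixed scale --- and you do not prove it. This is not a formality: for $n_0=2$ the claim is actually false (on $K_{2,2}$ Client can always avoid a perfect matching in the two available rounds), and for $n_0=4$ with up to two forbidden edges it requires a genuine case analysis beyond anything you have written. The paper avoids this difficulty by being more aggressive in Stage~I: in \emph{every} round it picks the $A$-vertex $u$ maximizing $d_{W\cup H}(u,B\cap R)$, and tracks the sharper invariant $e_{W\cup H}(R)\le\max\{0,(|R|-n)/2\}$. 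This forces the forbidden set inside $R$ to vanish once $|R|\le n$, so that the endgame is played on a completely clean $K_{4,4}$, which is then handled by an explicit five-move script. The easiest repair to your proof is to adopt this sharper vertex choice so that your recursion reaches an $H=\emptyset$ base case, for which the paper's Stage~II argument applies verbatim.
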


\begin{proof}
Let $V=A\cup B$ be the bipartition of $K_{n,n}$. Throughout the game, we 
denote with $R$ the set of vertices which are isolated in Client's graph. 
For $n-4$ rounds (Stage~I), Waiter's strategy will be to force a large matching in Client's graph greedily, while making sure that $e_{W\cup H}(R)$ decreases with every round as long as this value is still positive. Within 5 further rounds (Stage~II), Waiter will then complete this matching
to a perfect matching.

If at any point during the game, Waiter is unable to follow her strategy, she forfeits the game. (We will later see that this does not happen.) The set
$R$ is dynamically updated after every turn. Waiter's strategy consists of the following two stages: \\

\textbf{Stage I:} This stage lasts $n-4$ rounds, in which Waiter forces
a matching of size $n-4$ in Client's graph between $A$ and $B$. 
Each round is played as follows: Let $u\in A\cap R$ be an arbitrary vertex maximizing $d_{W\cup H}(u,B\cap R)$. Then Waiter offers two free edges $ub_1$, $ub_2$ with $b_1,b_2\in B\cap R$. By symmetry, assume that 
Client claims $ub_1$. Then, vertices $u$ and $b_1$ are removed from $R$.

\medskip

\textbf{Stage II:} When Waiter enters Stage~II, Client's graph is a matching $M'$ of size $n-4$. Let $S=A\setminus V(M')$ and $T=B\setminus V(M')$ at this point. 
Within $5$ rounds, Waiter forces a matching of size $4$
between $S$ and $T$. The details are given later in the strategy discussion.

\medskip

It is evident that, if Waiter can follow this strategy without forfeiting the game, she creates a perfect matching of $K_{n,n}\setminus H$ in Client's graph within $n+1$ rounds. It thus remains to check that she does not forfeit the game.\\

\textbf{Strategy Discussion:}

{\bf Stage I:} By induction on the number of rounds
it follows that Waiter can follow the strategy of Stage~I
while she additionally maintains that
\begin{equation}\label{eq:pm}
e_{W\cup H}(R) \leq \max\left\{0, \frac{|R|-n}{2} \right\}~ .
\end{equation}
Indeed, the above inequality holds at the beginning of the game, since
at that point $|R|=2n$ and $e(H\cup W)\leq \frac{n}{2}$.
Now consider any round $r$ in Stage~I, and assume that Waiter so far
could follow the strategy and maintain Inequality~\ref{eq:pm}.
According to the strategy, she then picks a vertex
$u\in A\cap R$ such that $d_{W\cup H}(u,B\cap R)$ is maximal.
By induction we have that the number of vertices $b\in B\cap R$
with $ub$ being free is at least
$$
|B\cap R| - d_{W\cup H}(u,B\cap R) 
\geq \frac{|R|}{2} - \max\left\{0, \frac{|R|-n}{2} \right\}
= \min\left\{\frac{|R|}{2}, \frac{n}{2} \right\} \geq 2~ .
$$
Hence, there exist at least two vertices $b_1,b_2\in B\cap R$
as required and Waiter can follow her strategy.
Moreover, if $e_{W\cup H}(R)=0$ was true at the beginning of round $r$,
then the same still holds after the update of round $r$,
since $u$ gets removed from $R$. Otherwise, $u$ was chosen
with $d_{W\cup H}(u,B\cap R)\geq 1$ and, after the update, $|R|$ decrases by $2$ while $e_{W\cup H}(R)$ decreases by at least  
$d_{W\cup H}(u,B\cap R)\geq 1$. In any case, Inequality~\ref{eq:pm}
holds again.

\medskip

{\bf Stage~II:} When Waiter enters Stage~II, Client's graph is a matching $M'$ of size $n-4$. Moreover, using Inequality~\ref{eq:pm} from the end of Stage~I, 
we deduce that $e_{W\cup H}(S,T) = 0$ needs to
hold at that point. 
Waiter now forces a perfect matching between $S=\{s_1,s_2,s_3,s_4\}$ and
$T=\{t_1,t_2,t_3,t_4\}$ as follows: In the first two rounds, she offers all the edges $s_1t_i$ with $i\in [4]$. W.l.o.g.~we may assume that Client chooses
$s_1t_1$ and $s_1t_2$. Then, in the third round, Waiter offers $s_2t_3$ and $s_2t_4$, from which Client w.l.o.g.~chooses $s_2t_3$. Afterwards,
Waiter offers the edges $s_3t_4$ and $s_4t_4$, from which Client w.l.o.g.~chooses $s_3t_4$. Finally, Waiter offers $s_4t_1$ and $s_4t_2$, and no matter which edge Client takes, a perfect matching is now finished. 
\end{proof}

\textit{Proof of Theorem~\ref{thm:pm_unbiased}.}
If Waiter would want to win the unbiased perfect matching game
on $K_n$ within $\frac{n}{2}$ rounds, 
she would need a matching $M$ of size $\frac{n}{2}-1$
after $\frac{n}{2}-1$ rounds. However, since there is only
one possible edge to extend $M$ to a perfect matching,
but Waiter needs to offer two edges,
Client can easily prevent the perfect matching in round $\frac{n}{2}$.
Hence, $\tau_{WC}(\mathcal{PM}_n,1)\geq \frac{n}{2}+1$ follows.
For equality we just apply Theorem~\ref{WCUPM} with $H=\varnothing$ and let Waiter solely
play on a subgraph of $K_n$ isomorphic to $K_{\frac{n}{2},\frac{n}{2}}$.
\hfill $\Box$

\section{Unbiased Hamiltonicity game}\label{sec:ham}

In the section we will prove Theorem~\ref{thm:ham_unbiased} by showing a slightly stronger statement which will also be applied later in the discussions of the pancyclicity game (Section~\ref{sec:pancyclic})
and the tree embedding game (Section~\ref{sec:trees}).

\begin{thm}\label{WCUHC}
For large enough $n$, in the unbiased Waiter-Client game on $K_n$, Waiter has a strategy to force a Hamilton cycle $H$ within $n+1$ rounds such that the following properties hold
immediately after $H$ is created:
\begin{enumerate}
 \item $\forall v \in V(K_n):d_W(v) < 10$.
 \item Let $e_1^C$ be the first edge Client claims in the game, 
 	then $e_1^C \in E(H)$.
 \item There exists a path $P \subset C$ of length $\frac{1}{5} n$ 
    s.t. $E_W(V(P)) = \emptyset$.
	\end{enumerate}
\end{thm}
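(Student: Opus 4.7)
The plan is to build the Hamilton cycle in three stages, using Theorem~\ref{WCUPM} as a seed and finishing with a P\'osa-style rotation. Fix a bipartition $V(K_n)=A\cup B$ with $|A|=|B|=n/2$, and reserve a subset $Z\subset V(K_n)$ of size at least $n/5+2$ distributed evenly between $A$ and $B$. Waiter will play only on $K_{A,B}$, so her graph stays bipartite; moreover, by arranging matters so that no edge inside $Z$ is offered until it is needed, Waiter can guarantee $E_W(V(P))=\emptyset$ for a subpath $P$ of Client's final Hamilton cycle whose vertex set is contained in $Z$, securing property~(3).

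In Stage~I, taking roughly $n/2+1$ rounds, Waiter applies Theorem~\ref{WCUPM} on the bipartite board $K_{A,B}$, with the forbidden graph $H$ recording edges inside $Z$ that she wishes to keep in reserve. This forces Client to claim a perfect matching $M$ of $K_{A,B}$ with at most one wasted edge. By arranging the very first offered pair to consist of two candidate matching edges, Waiter ensures property~(2): Client's first claimed edge belongs to $M$ and thus to the final Hamilton cycle.

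In Stage~II, spanning about $n/2-2$ rounds, Waiter merges the $n/2$ matching edges of $M$ into a single Hamilton path. Treating each matching edge $u_iv_i$ as a ``super-vertex'', Waiter iteratively attaches super-vertices to a growing path by offering two edges from the current path endpoint to endpoints lying in two distinct unattached super-vertices. Either of Client's choices extends the path by one super-vertex without wasting an edge. Some care is used to route the path so that a contiguous segment of length at least $n/5$ lies inside $Z$. In Stage~III, one or two further rounds suffice to close the Hamilton path into a Hamilton cycle: since $e(W)=O(n)$, the set of ``boosters'' (edges whose addition to Client's graph yields a Hamilton cycle) has size comparable to $n^2$, so Waiter can offer two boosters simultaneously and force Client to close the cycle. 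Property~(1) is maintained by a straightforward balancing argument: with only $n+1$ Waiter edges in total, Waiter keeps a running count of $d_W(v)$ for each vertex and, whenever the offered pair is not uniquely forced by the strategy, breaks ties by avoiding vertices close to the threshold~$10$.

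The main obstacle will be the tightness of the round budget, which permits only one wasted edge throughout the entire game. Stage~II must therefore be essentially loss-free, even though Client controls which of the two offered edges is kept. Verifying this requires showing that, at every step, Waiter has two safe extension edges available and that her simultaneous constraints --- protecting $Z$, bounding Waiter-degrees by~$10$, and preserving enough flexibility for the closing rotation --- never obstruct one another. A careful case analysis in the closing step, in the spirit of the Hefetz--Stich argument for the Maker-Breaker $n+1$ bound, is the delicate point and is expected to dominate the technical work of the proof.
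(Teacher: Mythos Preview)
Your approach differs substantially from the paper's, which does not go through a matching at all. Instead, the paper grows four vertex-disjoint paths simultaneously, alternating two move types: Type~A offers a new vertex to the endpoints of two \emph{different} paths, and Type~B offers a single endpoint to two new vertices, with the rejected vertex absorbed into the \emph{other} pair of paths at the next Type~A step. The purpose of this interlocking scheme is exactly property~(3): every rejected edge ends up connecting two distinct paths, so each individual path stays free of Waiter edges. The four paths are then linked and closed in five further rounds, with the P\'osa rotations arranged to avoid deleting $e_1^C$.

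There are two concrete gaps in your plan. First, the round budget does not close. Theorem~\ref{WCUPM} already wastes one round, so after your Stages~I--II you have spent $n$ rounds and Client holds a Hamilton path plus one stray edge, leaving a single round to close the cycle. That requires Waiter to offer two edges each completing a Hamilton cycle in Client's graph; but a Hamilton path has exactly one closing edge, and the stray edge yields at most one further Hamilton path via a single rotation, and only if you have arranged one of its endpoints to be a path endpoint --- which you have not. Your claim of ``boosters comparable to $n^2$'' confuses Waiter's sparsity with Client's: boosters are determined by Client's graph, which here has only one non-path edge. (The paper spends zero wasted rounds before the closing stage and thus has three rounds for rotations.) Second, your mechanism for property~(3) cannot work in a single-path merge. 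To route a contiguous length-$n/5$ segment through $Z$, each extension step inside $Z$ must offer both candidate edges within $Z$ --- otherwise Client simply exits $Z$ --- and then the rejected edge is a Waiter edge with both endpoints in $Z$. This produces $\Theta(n)$ Waiter edges inside $Z$. The forbidden-graph slack $e(H)\le n/2$ in Theorem~\ref{WCUPM} is also far too small to reserve the $\Theta(n^2)$ bipartite edges inside $Z$. The paper's four-path device is precisely what routes rejected edges \emph{between} structures rather than \emph{within} one, and your single growing path has no analogue of this.
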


\begin{proof}
At the beginning of the game, let Waiter fix an arbitrary
subset $A_1 \subset V$ of size $4$. 
Waiter's strategy will be as follows: At first she forces four vertex disjoint paths
$P_i$ (with $i\in [4]$) in Client's graph, each having an endpoint in $A_1$, 
such that these paths cover the whole vertex set $V=V(K_n)$. 
Afterwards, she makes Client connect the mentioned paths 
to a Hamilton cycle such that the prescribed properties are satisfied.

Let $A_1=\{a_i:~ i\in [4]\}$ and initially, for every $i\in [4]$, let 
$P_i$ be the path consisting only of the vertex $a_i$.
Waiter will force Client to extend the path $P_i$ for every $i\in [4]$, such that $a_i$ remains one of its endpoints, until the union of these four paths covers $V$. 
At any point of the game, we let $\mathcal{P}$ denote the collection of these four paths. We set 
$V(\mathcal{P}) = \bigcup_{i\in [4]} V(P_i)$ and $R=V\setminus V(\mathcal{P})$. Moreover, we always denote
with $a_i'$ the other endpoint of $P_i$ different from $a_i$
(except when $v(P_i)=1$ where we set $a_i'=a_i$),
and we set $A_2 = \{a_i':~ i\in [4]\}$. During most of the game, Waiter's strategy is to consider the paths in pairs. She takes two turns to extend either $P_1$ and $P_2$ or $P_3$ and $P_4$ and does so alternately. In order to keep our notation short, we define $\pi$ to be the permutation on $[4]$ with cycles $(1~2)$ and $(3~4)$, and we sometimes denote $P_4$ with $P_0$ when we consider indices modulo 4.

In the following, we will present a strategy for Waiter and then prove that this strategy allows her to force a Hamilton cycle
within $n+1$ rounds such that all the prescribed properties are ensured. 
If at any point during the game, she is unable to follow her strategy, she forfeits the game. (We will later see that this does not happen.) The sets $A_1, A_2, C, W, R$ and $\mathcal{P}$ are updated at the end of every turn.
Waiter's strategy consists of the following three stages: \\

\textbf{Stage I:} This stage lasts exactly $n-4$ rounds. 
During this stage Waiter extends the four paths $P_i$ until $R=\emptyset$. She does this by alternating between two types of moves:
\begin{enumerate}
\item[] \textbf{Type A:} Let this be the $i^{\text{th}}$ round, and $x\in R$ be a vertex maximizing $d_W(x,V(\mathcal{P}))$ (breaking ties arbitrarily). Then Waiter offers the edges
$xa_i'$ and $xa_{i+1}'$ (with indices taken mod~4). After Client
has chosen one of these edges and thus extended one of the paths $P_i$ or $P_{i+1}$ (indices taken mod~4), the sets $A_2, C, W, R$ and $\mathcal{P}$ are updated in the obvious way.

\vspace{0.1cm}

\item[] \textbf{Type B:} Let this be the $i^{\text{th}}$ round, and let $x,y \in R$ be picked arbitrarily.
Moreover, let $P_t$ be the path which was extended in the previous round. Then Waiter offers the edges $xa_{\pi(t)}'$ and $ya_{\pi(t)}'$. After Client
has chosen one of these edges and thus extended the path $P_{\pi(t)}$, the sets $A_2, C, W, R$ and $\mathcal{P}$ are updated in the obvious way.
\end{enumerate} 

As long as $|R|\geq 2$ holds, Waiter alternates between these two types of moves, with Type~$A$ being considered in odd rounds and Type~$B$ being considered in even rounds. Once $|R|=1$ holds, Waiter plays one more round according to Type~A. Afterwards, she proceeds with Stage II.

\medskip

\textbf{Stage II:} This stage lasts exactly $2$ rounds, in which Waiter forces Client to connect the paths from $\mathcal{P}$. As long as $|\mathcal{P}| > 2$, Waiter connects two paths in Client's graph as follows: She fixes a vertex $v \in A_2$ such that $d_W(v,A_2)$ is maximal and offers $vx, vy$ where $x,y \in A_1$ are picked such that they do not belong to the same path as $v$. W.l.o.g. assume that Client claims $vx$ and thus connects two paths $P_{j_1},P_{j_2}\in\mathcal{P}$. Then update $A_1$ and $A_2$ by removing $v$ and $x$ respectively, and update $\mathcal{P}$ by removing $P_{j_1}$ and $P_{j_2}$, while adding the path induced by $E(P_{j_1})\cup E(P_{j_2})\cup \{vx\}$. 

Once $|\mathcal{P}|=2$ holds, Waiter proceeds with Stage~III. 

\medskip

\textbf{Stage III:} Within $3$ rounds Waiter forces a Hamilton cycle as desired. The details of how she can do this can be found in the strategy discussion.

\medskip

It is evident that, if Waiter can follow the strategy without forfeiting the game, she creates a Hamilton cycle $H$ within $n+1$ rounds.
It thus remains to check that she does not forfeit the game
and that $H$ fulfills the properties (1) -- (3) from Theorem~\ref{WCUHC}.

\medskip

\textbf{Strategy discussion:}

\textbf{Stage I:} At any point of the game we call a vertex $v\in R$ \textit{bad} if $d_W(v,V(\mathcal{P}))\geq 1$ holds. We observe first that there will never be more than one such vertex which at the same time helps Waiter to follow the strategy of Stage~I.

\begin{obs}\label{obs_UH1}
For every $i\leq n-4$, Waiter can follow the $i^{\text{th}}$ move of the proposed strategy. Moreover, the following holds for every $i\leq n-5$:
\begin{itemize}
\item[(a)] if $i$ is odd, then $e_W(A_2)=1$ and no bad vertex exists at the end of round~$i$. Moreover, the unique edge in $E_W(A_2)$ connects endpoints of $P_i$ and $P_{i+1}$ (indices taken mod~4). 
\item[(b)] if $i$ is even, then $e_W(A_2)=0$ and
there is exactly one bad vertex $z$ at the end of round $i$.
Moreover it holds that $d_W(z,V(\mathcal{P}))=d_W(z,V(P_{i-1})\cup V(P_i))=1$ (indices taken mod~4).
\end{itemize}
Moreover, (c) $e_W(A_2)\leq 2$ at the end of round $i=n-4$.
\end{obs}

\begin{proof}
The statement follows by induction on $i$. 
Waiter can obviously follow the strategy for round 1, where she offers two edges according to Type~A. The edge claimed by Client extends $P_1$ or $P_2$. After the update, the other edge belongs to $E_W(A_2)$ and connects the endpoints of $P_1$ and $P_2$, making sure that statement~(a) holds for $i=1$. Let $i>1$ then.

Consider first the case when $i\leq n-5$ is even, and observe that $|R|\geq 2$ before round $i$. 
In round~$i-1$ Waiter played according to Type~A and extended 
a path $P_{t}$ with $t \equiv i-1~\text{or}~i$~(mod~4). By induction, there was no bad vertex at the end of round $i-1$,
but there was exactly one edge $e^W$ in $E_W(A_2)$, and $e^W$ connected endpoints of $P_{i-1}$ and $P_i$ (indices taken mod~4).
Now, in round $i$ Waiter wants to play according to Type~B and needs to offer two edges $xa_{\pi(t)}'$ and $ya_{\pi(t)}'$ with $x,y\in R$. She can do this since $|R|\geq 2$ and $x,y$ cannot be bad.
The edge claimed by Client extends $P_{\pi(t)}$. By this, $e^W$ is removed from $E_W(A_2)$ after the update, leading to $e_W(A_2)=0$.
The other edge goes to Waiter's graph and creates exactly one bad vertex $z\in \{x,y\}$. Since $i$ is even and thus
$\pi(t) \equiv i~\text{or}~i-1$ (mod~4), it follows that
$d_W(z,V(\mathcal{P}))=d_W(z,V(P_{\pi(t)}))=d_W(z,V(P_{i-1})\cup V(P_i))=1$ (indices taken mod~4).

Consider next the case when $i\leq n-5$ is odd. 
By induction we know that $e_W(A_2)=0$ and there was exactly one bad vertex $z$ at the end of round $i-1$, but
$d_W(z,V(\mathcal{P}))=d_W(z,V(P_{i-2})\cup V(P_{i-1}))=1$ (indices taken mod~4). 
Now, in round $i$, Waiter wants to play according to Type~A.
She picks a vertex $x\in R$ maximizing $d_W(x,V(\mathcal{P}))$,
hence setting $x=z$ by the uniqueness of the bad vertex. She needs to offer the edges $xa_i'$ and $xa_{i+1}'$, which she can do since
$d_W(x,V(P_{i})\cup V(P_{i+1}))=0$ (indices taken mod~4).
The edge claimed by Client extends a path in $\mathcal{P}$ by the vertex $x$, so that $x$ is removed from $R$
and there does not exist a bad vertex anymore.
After the update of $\mathcal{P}$ in round $i$, the edge which goes to Waiter's graph connects the endpoints of $P_i$ and $P_{i+1}$ (indices taken mod~4) belonging to $A_2$, such that $e_W(A_2)=1$ as claimed.

Finally, consider the case when $i=n-4$. Then, Waiter can follow the strategy for round $i$ analogously to the case when $i\leq n-5$ is odd. By induction, using (a) or (b), it holds that 
$e_W(A_2 \cup R)= 1$  at the end of round $i-1$. Since, during round $i$, the last vertex of $R$ is moved to $A_2$ and since Waiter receives only one new edge, it is immediately clear that
$e_W(A_2)\leq 2$ afterwards.
\end{proof}

\medskip

{\bf Stage II:} When Waiter enters Stage~II, Client's graph
is the disjoint union of four vertex disjoint paths covering $V$,
with each path being of length roughly $\frac{n}{4}$, since the pairs
$(P_1,P_2)$ and $(P_3,P_4)$ were extended alternately during Stage~I.
Before we show that Waiter can follow Stage~II of the proposed strategy,
let us first observe how Waiter's edges are distributed at the end of Stage~I.

\begin{obs}\label{obs_UH2}
Right at the moment when Waiter enters Stage~II, the following holds:
\begin{enumerate}
\item[(a)] $E_W(A_1)\cup E_W(A_1,A_2) = \varnothing$ and $e_W(A_2)\leq 2$,
\item[(b)] $d_W(v)\leq 4$ for every $v\in V$,
\item[(c)] $E_W(V(P_i))=\varnothing$ for every $i\in [4]$.
\end{enumerate}
\end{obs}

\begin{proof}
For (a) notice that only in the first four rounds of Stage~I
Waiter offers edges incident to $A_1$, and none of these
edges is contained in $A_1$. All the other endpoints of these edges
are part of $V(\mathcal{P})$ at the end of the $5^{\text{th}}$ round,
since by property (a) of Observation~\ref{obs_UH1}, there do not exist
bad vertices at that moment. But now, since all paths get extended further in Stage~I by attaching edges to the vertices in $A_2$ and making appropriate updates, none of the mentioned endpoints belongs to $A_2$ later on. It thus follows that
$E_W(A_1,A_1\cup A_2)=\varnothing$ at the end of Stage~I. The inequality $e_W(A_2)\leq 2$ is already given by property (c) in Observation~\ref{obs_UH1}.

For (b) observe that in Stage I, immediately after a vertex $v$ is added to
some path $P_i\in \mathcal{P}$, it holds that 
$d_W(v,V(\mathcal{P}))\leq 1$
if $v$ was not bad before, or $d_W(v,V(\mathcal{P}))\leq 2$
if $v$ was bad before. 
This degree may increase further
by at most 2,
when the pair of paths $(P_i,P_{\pi(i)})$ is considered
for a further extension by a sequence of turns of Type A and Type B.
But then, according to the strategy, both paths get extended,
which ensures that from now on $v$ is not an endpoint anymore and
Waiter does not offer any further edges at $v$ throughout Stage~I.

For (c), let $e^W$ be any edge claimed by Waiter in Stage~I.
If this edge was offered by Type~A, then after the corresponding round $i$, the edge $e^W$ belongs to $E_W(V(P_i),V(P_{i+1}))$ (indices taken mod~4). Otherwise, if $e^W$ was offered by Type~B, then 
after the corresponding round $i$, $e^W$ connects the unique bad vertex $z$ with the endpoint of one of the paths $P_{i-1}$ or $P_{i}$.
In the next round, playing according to Type~A, Waiter makes sure that
$z$ is added to one of the paths $P_{i+1}$ or $P_{i+2}$,
leading to $e^W\in E_W(V(P_r),V(P_s))$ with $r\neq s$.
\end{proof}

\medskip

Now, having Observation~\ref{obs_UH2} in hand, one can easily see that Waiter can follow Stage~II of her strategy without forfeiting the game. Indeed, by property~(a) from the observation, we know that all edges between $A_1$ and $A_1\cup A_2$ are free. Hence, she can offer edges $vx$ and $vy$ as required by her strategy. Moreover, we have $e_W(A_2)\leq 2$ at the beginning of Stage~II. Since in Stage~II Waiter always picks $v\in A_2$ such that $d_W(v,A_2)$ is maximized and since $v$ is removed from $A_2$ after the update, it 
follows that $e_W(A_1\cup A_2)=0$ must hold at the end of Stage~II.

\medskip

{\bf Stage~III:} When Waiter enters Stage~III, $\mathcal{P}$ consists of two paths, say $P_1$ and $P_2$, such that all the four edges
between their endpoints are still free. Moreover, it holds that $d_W(v)\leq 6$ for every $v\in V$, since these degrees were bounded by 4 at the end of Stage~I and since Stage~II took only 2 rounds.

Now, in Stage III, the first step for Waiter is to force a Hamilton path in Client's graph. To do so, she arbitrarily chooses an endpoint $v$ of $P_1$ and offers the edges $vx,vy$ with $x,y$ being the endpoints of $P_2$. Let $P= (v_1, v_2, \dots , v_n)$ be the Hamilton path that is created in Client's graph by this first move. Then by the
conditions from the beginning of Stage~III we know that $v_1v_n$ is still unclaimed. 

Now, by using P\'osa rotations~\cite{Posa1976}, Waiter forces a Hamilton cycle in Client's graph. For her second move in Stage~III Waiter then picks two vertices $v_i$ and $v_j$ with $i,j\notin \{1,n\}$ such that they are not neighbours of each other, and such that $e_1^C \notin \{v_iv_{i+1},v_jv_{j+1},\}$ where $e_1^C$ denotes the edge claimed by Client in round 1, and such that
the edges $v_1v_{i+1},v_1v_{j+1},v_iv_n,v_jv_n$ are still free. 
Such vertices must exist since Waiter's degree of all vertices is bounded by $7$ at this moment. She offers $v_iv_n$ and $v_jv_n$ to Client, who w.l.o.g. claims $v_iv_n$. In the last round Waiter then offers $v_1v_{i+1}$ and $v_1v_n$, and no matter which edge Client chooses, that edge closes a Hamilton cycle $H$. 

Hence, in order to finish our argument, it remains to prove that the properties of Theorem~\ref{WCUHC} hold. 

Property~(1) holds since for every $v\in V$ we had $d_W(v)\leq 6$
at the beginning of Stage~III, while Stage~III lasted exactly 3 rounds. 
Property~(2) holds because Client's only edge which is not part of $H$ is either $v_iv_{i+1}$ or $v_iv_n$, depending on which edge Client claimed in the last round, and $v_i$ was chosen in such a way that both edges differ from $e_1^C$.
For Property~(3) recall that, during Stage I, among the paths $P_1,\ldots,P_4$ there were no interior Waiter's edges, according to Observation~\ref{obs_UH2}, and each of these paths reached a length longer than $\frac{1}{5}n$. Also, when Waiter connects these paths during Stage~II and Stage~III no such interior edges are created, as Waiter only offers edges between endpoints. Moreover, when we remove the unique Client's edge which does not belong to $H$, at most one of these paths from Stage~I can get destroyed, and hence there must remain at least three paths
supporting Property~(3).
\end{proof}

\textit{Proof of Theorem~\ref{thm:ham_unbiased}.}
If Waiter would want to win the unbiased Hamiltonicity game
on $K_n$ within $n$ rounds, 
she would need a Hamilton path of length $n-1$
after $n-1$ rounds. However, since there is only
one possible edge to extend this to a Hamilton cycle 
and Waiter needs to offer two edges,
Client can easily prevent the Hamilton cycle in round $n$.
Hence, $\tau_{WC}(\mathcal{H}_n,1)\geq n+1$ follows.
For equality we just apply Theorem~\ref{WCUHC}.
\hfill $\Box$

\section{Unbiased Pancyclicity game}\label{sec:pancyclic}

\begin{proof}[Proof of Theorem~\ref{thm:pancyclic_fast}]
Set $g(n)=\lceil \log_2^{(k)}(n) \rceil$ and
$f(n)=g(n) + 100$ for any positive integer $k$. 
In the following we will describe a strategy for Waiter in the unbiased Waiter-Client game on $K_n$, and afterwards we will show that it is a strategy with which Waiter forces a pancyclic spanning subgraph of $K_n$ within at most 
$n+ \log_2(n) + O(\max\{f(n),k\})$ rounds. Whenever Waiter is not able to follow the proposed strategy, she forfeits the game. (We will show later that this does not happen.)
The strategy is split into the following five stages:

\medskip

{\bf Stage I:} Within at most $n+1$ rounds,
Waiter forces a Hamilton cycle $H = (v_1,v_2,\ldots,v_n)$ such that the following holds right at the moment when the Hamilton cycle is completed:
\medskip

\begin{minipage}{0.95\textwidth}
\begin{enumerate}[label=\itmarab{H}]
\item\label{ham:degrees} $d_W(v)<10$ for every $v\in V(K_n)$,
\item\label{ham:subpath} for every $1\leq j_1,j_2 \leq \frac{n}{10}$ with $|j_1-j_2|\geq 2$ it holds that $v_{j_1}v_{j_2}\notin W\cup C$.
\end{enumerate}
\end{minipage}

\medskip

The details can be found in the strategy discussion. Afterwards, Waiter proceeds with Stage~II.

\medskip

{\bf Stage II:} This stage lasts two rounds. At first, Waiter offers the edges
$v_iv_{f(n)+i}$ with $i\in [2]$.
Among these edges, Client needs to claim one;
denote it with $w_1w_{f(n)+1}$, 
and afterwards let
$$
w_i=
\begin{cases}
v_i & ~ \text{if }w_1=v_1\\
v_{i+1} & ~ \text{if }w_1=v_2\\
\end{cases}
$$
for every $i\in [n]$ (with $v_{n+1}:=v_1$).
In the second round, Waiter offers two free edges between $w_{f(n)+1}$ and $\{w_{n-60},\ldots,w_{n-50}\}$, among which Client needs to choose one. Afterwards, Waiter proceeds with Stage~III.

\medskip

{\bf Stage III:} This stage lasts $f(n)-2$ rounds. 
In the $i^{\text{th}}$ round of Stage~III,
Waiter offers the edges $w_1w_{i+2}$ and
$w_{f(n)-i}w_{f(n)+1}$, among which Client always 
needs to claim one.
Once all the $f(n)-2$ rounds of Stage~III are played, Waiter proceeds with Stage~IV.

\medskip

{\bf Stage~IV:} This stage lasts at most 
$\lceil \log_2(n) \rceil$ rounds. Waiter makes sure that at the end of the $i^{\text{th}}$ round of Stage~IV, there exist vertices
$w_{t_0},w_{t_1},\ldots,w_{t_i}$ 
such that the following holds:
\medskip

\begin{minipage}{0.95\textwidth}
\begin{enumerate}[label=\itmarab{W}]
\item\label{W:order} $f(n)+1=t_0<t_1<\ldots<t_i\leq n$,
\item\label{W:edges} $w_{t_{j-1}}w_{t_{j}}\in C$ for every $j\in [i]$,
\item\label{W:distance} 
$\min\{2t_{i-1}  - 2i,n\} - 20 \leq t_i \leq  
\min\{2t_{i-1} - 2i,n\}$.
\end{enumerate}
\end{minipage}

\medskip

In order to do so, in the $i^{\text{th}}$ round
Waiter offers two free edges of the form
$w_{t_{i-1}}w_j$ with 
$\min\{2t_{i-1}  - 2i,n\} -20 \leq j \leq  
\min\{2t_{i-1} - 2i,n\}$. For the edge
$w_{t_{i-1}}w_j$ chosen by Client,
Waiter then sets $t_i:=j$.
Once there is a round $s$ such that
$n-20 \leq t_s\leq n$ holds, Waiter stops with Stage~IV and proceeds with Stage~V.

\medskip

{\bf Stage~V:} This stage lasts at most $k-1$ rounds. For her $i^{\text{th}}$ move, 
Waiter aims to make Client claim an edge
$w_1w_{\ell}$ with $2 \log_2^{(i)}(n) \leq t_j \leq \ell \leq  t_{j} + 20 \leq  10 \log_2^{(i)}(n)$ for some $j\leq s$. In case Client does not already 
possess such an edge, Waiter just offers two free edges of the mentioned kind. Otherwise, Waiter just skips that move and proceeds to her next move.

\medskip

In the following discussion, we need to check two properties for the given strategy: (1) Waiter can always follow the proposed strategy without forfeiting the game, and (2) when Stage~V is over, Client's graph is pancyclic. Just note that then a pancyclic graph will be forced within at most
$$
n + \log_2(n) + f(n) + k 
= n + (1+o(1))\log_2(n)
$$
rounds, according to the bounds on the number of rounds given in the descriptions of the stages.\\

{\bf Strategy discussion:}

{\bf (1) -- Following the strategy:} Waiter can follow Stage~I because of Theorem~\ref{WCUHC}. 
According to that theorem, Waiter can force a Hamilton cycle $H$ within $n+1$ rounds such that Property~\ref{ham:degrees} holds immediately after $H$ is created. Moreover, she can make sure that right at this moment there is a path $P\subset H$ of length $\frac{n}{5}$ such that
$E_W(V(P))=\varnothing$ holds. Split $P$ into two subpaths $Q_1$ and $Q_2$ of length $\frac{n}{10}$ each.
Since $e(C\setminus H)=1$ holds at the end of round $n+1$, we know that there must be some $i\in [2]$ with $E_C(V(Q_i))\setminus E(Q_i)=\varnothing$. Labelling the vertices of $H$ in such a way that $V(Q_i)=\{v_1,\ldots,v_{\frac{n}{10}}\}$ holds, we obtain Property~\ref{ham:subpath}.

Afterwards, in Stage~II and in Stage~III, Waiter
needs to offer several edges contained in 
$E(\{v_i: i\leq f(n)+2\})\setminus E(H)$,
which are still free by Property~\ref{ham:subpath}
and since $f(n)+2<\frac{n}{5}$.
She also needs to offer two edges between
$w_{f(n)+1}$ and $\{w_{n-60},\ldots,w_{n-50}\}$
which is possible since $d_W(w_{f(n)+1})<10$ at the end of Stage~I.

Next consider Stage~IV and observe the following: 
if Waiter can follow this part of her strategy and as long as $t_i<n-20$ holds, we have
$t_i\geq 2t_{i-1} - 2i - 20$ and $t_0\geq f(n) > 100$,
leading to
\begin{equation}\label{eq:rounds}
t_i>2^i+i^2+50
\end{equation}
by a simple induction.
Thus, Stage~IV lasts at most $\lceil \log_2(n) \rceil$ rounds.
For showing that Waiter can follow each of these rounds, we proceed by induction on $i$: Let us consider the $i^{\text{th}}$ round of Stage~IV  (when $w_{t_0},\ldots,w_{t_{i-1}}$ are already given,
and \ref{W:order}--\ref{W:distance} hold for $i-1$). 
At the end of Stage~I we had $d_W(w_{t_{i-1}})<10$ and, since Stage~II lasted two rounds,
we have $d_W(w_{t_{i-1}})<12$ at the end of Stage~II. Since afterwards (in Stage~III-IV) until the current round, each offered edge was 
incident to some $w_{\ell}$, $\ell<t_{i-1}<\min\{2t_{i-1}-2i,n\}-20$,
there need to be at least two free edges as required by the strategy description.
Once Client has claimed one of these edges, it is obvious that the Properties~\ref{W:order}--\ref{W:distance} hold again for $i$.

Finally, consider the $i^{\text{th}}$ round of Stage~V
for $i\in [k-1]$. Since $t_0=f(n)+1$, $t_s\geq n-20$ and since $t_{j+1}< 2t_j$
for all $j\leq s$, it follows that there must be some $j\in [s]$
with $2\log_2^{(i)} (n) \leq t_j \leq 5 \log_2^{(i)} (n)$. Having such $t_j$ fixed, it is enough to find two free edges $w_1w_{\ell}$ with 
$t_j\leq \ell \leq t_j+20$. This is possible, because $d_W(w_1)<10$ at the end of Stage~I and since in Stage~II-IV no such edge was offered.

\medskip

{\bf (2) -- Finding pancyclicity:}
Let $H=(w_1,w_2,\ldots,w_n)$ be the Hamilton cycle from Stage~I. It is the edge disjoint union of
two paths $P_1=(w_1,w_2\ldots,w_{f(n)+1})$ and
$P_2=(w_{f(n)+1},\ldots,w_{n-1},w_n,w_1)$ of lengths
$f(n)$ and $n-f(n)$, respectively.
Both paths are closed to cycles in Client's graph by the edge $w_1w_{f(n)+1}$ which was claimed in Stage~II. We next observe that
after Stage~III the following holds:

\begin{obs}\label{short.paths}
For every $0\leq t\leq f(n)-1$ there is a path $P_1^t\subset C$ such that
\begin{enumerate}
\item[(i)] $V(P_1^t)\subset V(P_1)$,
\item[(ii)] $P_1^t$ has length $f(n)-t$,
\item[(iii)]
$w_1$ and $w_{f(n)+1}$ are the endpoints of $P_1^t$~ .
\end{enumerate}

\end{obs}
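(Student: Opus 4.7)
The plan is to read off $P_1^t$ directly from the edges Client has been forced to claim through the end of Stage~III. Recall that $P_1 = (w_1, w_2, \ldots, w_{f(n)+1})$ is a subpath of the Hamilton cycle built in Stage~I, so $E(P_1) \subset C$. The two extreme values of $t$ therefore come for free: taking $P_1^0 := P_1$ gives a path of length $f(n)$ from $w_1$ to $w_{f(n)+1}$ with all vertices in $V(P_1)$, and taking $P_1^{f(n)-1}$ to be the single edge $w_1 w_{f(n)+1}$ forced by Client in the first round of Stage~II gives a path of length $1$ with the same endpoints.

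For each intermediate value $t \in \{1, \ldots, f(n)-2\}$ I would exploit the symmetric design of Stage~III. In round $t$ Waiter offered exactly the two edges $w_1 w_{t+2}$ and $w_{f(n)-t} w_{f(n)+1}$, so Client was forced to claim at least one of them. If Client claimed $w_1 w_{t+2}$, I would set $P_1^t := (w_1, w_{t+2}, w_{t+3}, \ldots, w_{f(n)+1})$; otherwise I would set $P_1^t := (w_1, w_2, \ldots, w_{f(n)-t}, w_{f(n)+1})$. In both cases the edges of $P_1^t$ consist of the newly claimed edge together with a consecutive stretch of edges of $P_1 \subset C$, and the vertex sets are clearly contained in $V(P_1)$. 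A quick edge count in each case yields length $1 + (f(n) - 1 - t) = f(n) - t$ and $(f(n) - t - 1) + 1 = f(n) - t$, respectively, so conditions (i)--(iii) hold.

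There is essentially no obstacle here; the observation is a bookkeeping statement making precise why Stages~II and III were designed the way they were. The point worth highlighting is the reason for the symmetric pair $\{w_1 w_{t+2},\, w_{f(n)-t} w_{f(n)+1}\}$ being offered in round $t$: either of the two possible Client responses yields a path of the same length $f(n)-t$ between $w_1$ and $w_{f(n)+1}$, so by varying $t$ across $\{1, \ldots, f(n)-2\}$ we cover every length in $\{2, \ldots, f(n)-1\}$, and the two extreme lengths $1$ and $f(n)$ are already handled above.
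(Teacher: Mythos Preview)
Your proof is correct and follows essentially the same approach as the paper: the extreme cases $t=0$ and $t=f(n)-1$ are handled by $P_1$ and the edge $w_1w_{f(n)+1}$ from Stage~II, and for each intermediate $t$ the path is obtained by appending to one endpoint the edge Client chose in round $t$ of Stage~III together with the appropriate consecutive subpath of $P_1$. The explicit edge counts you give match those of the paper.
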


\medskip

\begin{proof}
For $t=0$ and $t=f(n)-1$ we let
$P_1^{f(n)-1}$ consist 
of the edge $w_1w_{f(n)+1}$ and $P_1^{0}=P_1$. For every $1\leq t\leq f(n)-2$ Client claimed either $w_1w_{t+2}$ or 
$w_{f(n)-t}w_{f(n)+1}$ in round $t$ of Stage~III,
and thus we can choose either
$P_1^t=(w_1,w_{t+2},w_{t+3},\ldots,w_{f(n)},w_{f(n)+1})$ or
$P_1^t=(w_1,w_2,\ldots,w_{f(n)-t-1},w_{f(n)-t},w_{f(n)+1})$.
\end{proof}

\medskip\medskip

Let $w_{f(n)+1}w_{n-p}$ be the edge claimed by Client in the second round of Stage~II, and observe that $50\leq p < f(n)$.
By closing the above paths $P_1^t$ into cycles,
either using the edge $w_1w_{f(n)+1}$ or
the path $(w_{f(n)+1},w_{n-p},w_{n-p+1},\ldots,w_n,w_1)$,
we obtain cycles of all lengths between 
$3\leq \ell \leq f(n)+p$.
Hence, it remains to find cycles of all the 
lengths larger than $f(n)+p \geq f(n) + 50$. 
In order to do so, we will fix $0\leq m \leq k-1$
from now on and we will explain 
how we find cycles of all lengths between
$\log_2^{(m+1)}(n)+50$ and $\min\{2\log_2^{(m)}(n),n\}$ in Client's graph. Running over all possible $m$ finishes the argument, 
as the interval $[3,\log_2^{(k)}(n)+50]$ and 
all the intervals
$[\log_2^{(m+1)}(n) + 50,\min\{2\log_2^{(m)}(n),n\}]$ with $0\leq m\leq k-1$ cover all integers from $3$ to $n$.

\medskip

Having $m$ fixed, set $w_{k_m}=w_n$ if $m=0$ and otherwise 
let $w_{k_m}$ be the vertex $w_{\ell}$ from the $m^{\text{th}}$ move in Stage~V.
By Stage~IV (in case $m=0$) or Stage~V (in case $m\neq 0)$ there is some index $j_m\leq s$
such that $k_m-20\leq t_{j_m}\leq k_m$.
Moreover, set $a_i:=t_i-t_{i-1}-1$ 
to be the number of vertices between 
$w_{t_{i-1}}$ and $w_{t_i}$ on $P_2$, for every $i\leq s$. Then, at the end of Stage~V the following holds:

\begin{obs}\label{many.cycles}
For every subset $S\subset [j_m]$ there is a path $P_2^S\subset C$
such that
\begin{enumerate}
\item[(i)] $V(P_2^S)\subset V(P_2)$,
\item[(ii)] $P_2^S$ has length 
$k_m-f(n)-\sum_{i\in S} a_i$,
\item[(iii)]
$w_1$ and $w_{f(n)+1}$ are the endpoints of $P_2^S$~ .
\end{enumerate}

\end{obs}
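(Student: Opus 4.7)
The plan is to build $P_2^S$ explicitly as a concatenation of three pieces, all of which live on vertices of the long arc $P_2$. First, I would walk from $w_{t_0}=w_{f(n)+1}$ through the vertices $w_{t_1},w_{t_2},\ldots,w_{t_{j_m}}$ selected in Stage~IV, making a binary choice at each step $i\in [j_m]$ between two sub-walks from $w_{t_{i-1}}$ to $w_{t_i}$ that are both available in Client's graph: the single chord $w_{t_{i-1}}w_{t_i}$ forced during Stage~IV, or the corresponding arc of $H$ of length $a_i+1$. I pick the chord exactly when $i\in S$ and the arc otherwise. Since $\sum_{i=1}^{j_m}(a_i+1)=t_{j_m}-t_0=t_{j_m}-f(n)-1$, the length of this first piece is $t_{j_m}-f(n)-1-\sum_{i\in S} a_i$.

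Second, I would extend from $w_{t_{j_m}}$ to $w_{k_m}$ along the Hamilton cycle $H$, using the $k_m-t_{j_m}$ edges in between (a nonnegative quantity, since property~\ref{W:distance} together with the choice of $j_m$ guarantees $t_{j_m}\leq k_m$). Third, I close off with the single edge $w_{k_m}w_1\in C$: for $m=0$ this is the edge $w_nw_1\in H$, and for $m\geq 1$ Stage~V ensures an edge of the form $w_1w_{k_m}$ in Client's graph (either Waiter forced it, or Client already possessed it at the start of the corresponding move, causing Waiter to skip). Adding the three pieces, the total length is $(t_{j_m}-f(n)-1-\sum_{i\in S}a_i)+(k_m-t_{j_m})+1=k_m-f(n)-\sum_{i\in S}a_i$, matching condition~(ii).

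For (i) and (iii) it remains to check that the walk is a simple path with the prescribed endpoints and with all its vertices in $V(P_2)$. The indices of the visited vertices are contained in $\{1\}\cup\{f(n)+1,f(n)+2,\ldots,k_m\}$, which is a set of pairwise distinct indices because $f(n)\geq 100$ separates $1$ from the rest and the $t_i$ are strictly increasing by property~\ref{W:order} with $t_{j_m}\leq k_m\leq n$. All these vertices lie in $V(P_2)=\{w_{f(n)+1},\ldots,w_n,w_1\}$, and the endpoints are indeed $w_{f(n)+1}$ and $w_1$. I do not expect any genuine obstacle here: the chord/arc dichotomy at step $i$ was arranged during Stage~IV to contribute exactly $a_i$ units of tunable length, which is precisely the flexibility that the statement demands, and all other parts of the construction are forced moves on $H$ or on a single guaranteed Client edge.
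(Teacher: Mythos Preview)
Your proposal is correct and follows essentially the same approach as the paper: the paper starts with the full arc $(w_{f(n)+1},\ldots,w_{k_m})$ extended by Client's edge $w_{k_m}w_1$ and then shortens it by replacing each sub-arc $(w_{t_{i-1}},\ldots,w_{t_i})$ with the chord $w_{t_{i-1}}w_{t_i}$ for $i\in S$, while you build the same path constructively in three pieces. Your length bookkeeping and your check that the walk is simple (via the strict monotonicity of the $t_i$ and $t_{j_m}\leq k_m$) match the paper's reasoning exactly.
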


\medskip

\begin{proof}
If we extend the subpath
$(w_{f(n)+1},\ldots,w_{k_m})$ from $P_2$ by Client's edge $w_{k_m}w_1$, we obtain a path $P_2'$
of length $k_m-f(n)$. By replacing any subpath $(w_{t_{i-1}},\ldots,w_{t_i})$, where $i\leq j_m$,
with the edge $w_{t_{i-1}}w_{t_i}$ which was claimed in Stage~IV,
the path $P_s'$ can be shortened by exactly $a_i$ edges. As this can be done for any $i\in S$, 
we can shorten $P_2'$ to a path 
of length $k_m-f(n)-\sum_{i\in S}a_i$. This proves the observation.
\end{proof}

\medskip\medskip

Now, by joining the path $P_1^t$ with the path $P_2^S$, for any $0\leq t\leq f(n)-1$ 
and any $S\subset [j_m]$,
we obtain a cycle of length
$k_m-(t+\sum_{i\in S} a_i)$. 
We will see in the following that this will indeed give us cycles
of all lengths between
$\log_2^{(m+1)}(n)+50$ and $\min\{2\log_2^{(m)}(n),n\}$.
We start with the following observation.


\begin{obs}\label{obs:integers}
Every integer in $[f(n)-1+\sum_{i\in [j_m]} a_i]$
can be written in the form 
$t+\sum_{i\in S} a_i$ with 
$0\leq t\leq f(n)-1$  and $S\subset [j_m]$.
\end{obs}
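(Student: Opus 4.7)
The plan is to prove the observation by a straightforward induction on the size of the index set, exploiting the fact that the values $a_i$ grow geometrically (because $t_i$ roughly doubles), which in turn ensures that the partial sums $\sum_{k\le i} a_k$ are always large enough to ``cover the gaps'' created by the next block $a_{i+1}$.

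More precisely, set $M_i := f(n) - 1 + \sum_{k=1}^{i} a_k$ for $0 \leq i \leq j_m$. I would prove by induction on $i$ that every integer in $[0, M_i]$ can be written as $t + \sum_{k\in S} a_k$ with $0 \leq t \leq f(n) - 1$ and $S \subseteq [i]$. The base case $i=0$ is immediate, as the integers in $[0, M_0] = [0, f(n)-1]$ are realised with $S = \varnothing$ and the appropriate choice of $t$. For the inductive step, if $m \in [0, M_i]$ we invoke the induction hypothesis directly; otherwise $m \in (M_i, M_{i+1}]$ and we want to write $m = m' + a_{i+1}$ with $m' \in [0, M_i]$, which requires $a_{i+1} \leq M_i + 1$.

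The only real work is verifying this inequality, which is where the recursion on the $t_i$'s from Property~\ref{W:distance} comes in. Since $t_0 = f(n)+1$, a telescoping computation yields $\sum_{k\le i} a_k = t_i - t_0 - i$, so $M_i + 1 = f(n) + (t_i - f(n) - 1 - i) = t_i - i$. On the other hand, Property~\ref{W:distance} gives $t_{i+1} \leq 2t_i - 2(i+1)$, hence
$$
a_{i+1} \;=\; t_{i+1} - t_i - 1 \;\leq\; t_i - 2i - 3 \;<\; t_i - i \;=\; M_i + 1,
$$
which closes the induction.

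There is no genuine obstacle here; the statement is essentially a greedy covering lemma for subset sums, and the doubling behaviour built into the definition of the $t_i$'s is precisely calibrated so that each new $a_{i+1}$ is no larger than one more than the sum of all previous slack. The only thing to be mildly careful about is the possible cap at $n$ in Property~\ref{W:distance}, but this only makes $a_{i+1}$ smaller than the bound $t_i - 2i - 3$, so the inequality used in the induction step remains valid.
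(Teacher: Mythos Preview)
Your proof is correct and follows essentially the same inductive argument as the paper's: both establish that $a_{j+1}\le f(n)-1+\sum_{i\le j}a_i$ via Property~\ref{W:distance} and then use this to extend the interval of representable integers from $[0,M_j]$ to $[0,M_{j+1}]$. One minor arithmetic slip: your telescoping actually gives $M_i+1 = f(n) + (t_i - f(n) - 1 - i) = t_i - i - 1$, not $t_i - i$; but since your bound $a_{i+1}\le t_i - 2i - 3$ is still at most $t_i - i - 1$ for every $i\ge 0$, the argument goes through unchanged.
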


\begin{proof}
Inductively, one may show that for every 
$0\leq j\leq j_m$
\begin{enumerate}
\item[(S)] every integer in 
$[f(n)-1+\sum_{i\in [j]} a_i]$ can be written 
as a sum $t+\sum_{i\in S} a_i$ with 
$0\leq t\leq f(n)-1$  and $S\subset [j]$~ .
\end{enumerate}
The beginning of the induction ($j=0$) should be obvious.
So, let $j+1>0$, and assume (S) to be true for $j$.
By~\ref{W:distance}, the definition of $a_j$ and since $t_0=f(n)+1$, it follows that
$$a_{j+1} = t_{j+1} - t_j - 1
	\leq t_j - 2(j+1) \leq \sum_{i\in [j]} a_{i} + f(n) - 1  ~ .$$
As, by induction, the integers up to the last sum can be written
as  $t+\sum_{i\in S} a_i$ with 
$0\leq t\leq f(n)-1$ and $S\subset [j]$,
adding $a_{j+1}$ to the latter creates all integers
in
$$
\left[a_{j+1},\sum_{i \in [j+1]} a_i + f(n)-1\right]
\supset 
\left[\sum_{i\in [j]} a_{i} + f(n) - 1 , 
\sum_{i\in [j+1]} a_i + f(n)-1\right]~ .
$$
Note that the last set contains all the remaining integers for completing the induction step.
This shows (S) for $j+1$ and finishes the proof of the observation.
\end{proof}

\medskip\medskip

Finally, observe that
$
f(n) - 1 +\sum_{i\in [j_m]} a_i
=
t_{j_m} - j_m - 2
$
and hence, by Observation~\ref{obs:integers} and by the argument immediately after Observation~\ref{many.cycles},
we see that we can find cycles of all lengths
between $k_m- (t_{j_m} - j_m - 2)$ and $k_m$.
Now, note that $k_m\geq t_{j_m}\geq k_m-20$ by the choice of $k_m$, and $k_m\geq \min\{2\log_2^{(m)}(n),n\}$
since $t_{j_m}\geq 2\log_2^{(m)}(n)$ by Stage~V (in case when $m\neq 0$).
Moreover, using that 
$t_{j_m}\leq 10\log_2^{(m)}(n)$
and (\ref{eq:rounds}) hold, we get 
$j_m\leq \log_2^{(m+1)}(n) + 10$.
Hence, we obtain all cycle lengths between
$\log_2^{(m+1)}(n) + 50 \geq  k_m- (t_{j_m} - j_m - 2)$ and $\min\{2\log_2^{(m)}(n),n\}\leq k_m$,
as desired.
\end{proof}

{\bf Remark:} In the above argument, we need that the interval $[3,\log_2^{(k)}(n)+50]$ and 
all the intervals
$[\log_2^{(m+1)}(n) + 50,\min\{2\log_2^{(m)}(n),n\}]$ with $0\leq m\leq k-1$ cover all integers from $3$ to $n$. Hence we only need to ensure that
$2\log_2^{(m)}(n)\geq \log_2^{(m)}(n) + 50$
holds for all $m\leq k-1$, i.e. $\log_2^{(m)}(n)\geq 50$, which is given when 
$\log_2^{(k+2)}(n)\geq 2$.
Thus, if we choose $H(n)$ to be the smallest integer $t$ such that $\log_2^{(t)}(n)<2$ holds, then 
the above proof gives us that the game is won within $n+\log_2(n) + H(n) + O(1)$ rounds.
This is only an additive constant away from the best known general upper bound on the minimal size of pancyclic graphs as mentioned in~\cite{Bondy1971}.

\section{Unbiased games involving trees}\label{sec:trees}

In this section we will prove Theorem~\ref{thm:tree}.
Based on ideas from~\cite{CFGHL2015} and~\cite{FHK2012}, we will split 
the given tree $T$ into a subgraph $T'$ and a nice behaving structure 
(large matching or long path). In her strategy, Waiter 
will first
force a copy of $T'$ more or less greedily and without wasting any move,
while additionally caring about the distribution of her edges.
Afterwards, Waiter 
will force the appropriate nice behaving structure
while wasting at most one round. 

\medskip

Let $T$ be any tree. We denote by $L=L(T)$ the set of leaves of $T$ and by 
$N_T(L)$ the set of vertices which are in the neighbourhood of the leaves w.r.t. $T$.
For every $x \in N_T(L)$ let $\ell(x)$ be the number of leaves which are neighbours of $x$ in $T$ and define $\Delta(N_T(L)) = \max_{x \in N_T(L)} \ell(x)$.

\medskip

We start with the following lemma, similar to Lemma~2.1 in \cite{Kriv2010},
which states that each of the trees $T$ considered in Theorem~\ref{thm:tree}
has a nice behaving structure as mentioned above:
a large matching where every edge is incident to a leaf
or a long \textit{bare path}, i.e. a path such that all the inner vertices have degree $2$ in $T$.

\begin{lemma}\label{lem:tree-property}
For every $\mu\in (0,1/2)$ there exists $\epsilon>0$ such that the following holds for every large enough integer $n$:
Let $T$ be a tree on $n$ vertices with 
$\Delta (N_T(L)) \leq \epsilon \sqrt{n}$, then either $|N_T(L)|\geq \mu \sqrt{n}$ or $T$ contains a bare path of length at least $\mu \sqrt{n}$.
\end{lemma}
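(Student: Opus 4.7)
The approach is to argue by contradiction. Let $p := |N_T(L)|$ and suppose both $p < \mu\sqrt{n}$ and every bare path of $T$ has length strictly less than $\mu\sqrt{n}$; the goal is to derive the contradiction $e(T) < n-1$ by bounding the total length of all bare paths.

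The first step, which is where the use of a constant $\varepsilon$ becomes crucial, is to bound the set $B$ of vertices of degree at least $3$ in $T$ in terms of $p$ (not merely in terms of $|L|$). From $\Delta(N_T(L)) \le \varepsilon\sqrt{n}$ one immediately gets $|L| \le p\varepsilon\sqrt{n} < \mu\varepsilon n$, but that is too weak by itself. Instead I would pass to $T' := T \setminus L$ and observe that a vertex is a leaf of $T'$ exactly when it lies in $N_T(L)$ and has exactly one non-leaf neighbour in $T$, so $|L(T')| \le p$. The standard tree identity yields $|B(T')| \le |L(T')| \le p$, and since any vertex in $B$ with no leaf neighbours retains its full degree in $T'$ and hence lies in $B(T')$, we get $|B \setminus N_T(L)| \le p$. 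Together with the trivial $|B \cap N_T(L)| \le p$, this gives the key bound $|B| \le 2p < 2\mu\sqrt{n}$.

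Next I would classify the bare paths of $T$ by the types of their endpoints. If $|B| = 0$ then $T$ is itself a bare path of length $n-1$ and the lemma is immediate, so assume $|B| \ge 1$. A short connectedness argument then rules out bare paths with two leaf endpoints: any branching vertex would have to attach to such a path either at an interior vertex (forcing its degree to exceed $2$, contradicting bareness) or at a leaf endpoint (contradicting degree $1$). Thus each bare path is of \emph{type (B)} (leaf to branching) or \emph{type (C)} (branching to branching). There are $|B| - 1 < 2\mu\sqrt{n}$ type-(C) paths, so under the contradiction hypothesis their total length is strictly less than $2\mu^2 n$. Type-(B) paths split further: those whose leaf has a branching neighbour contribute length $1$ each, giving total length at most $|L| < \mu\varepsilon n$; the remaining type-(B) paths have their leaf adjacent to a degree-$2$ leaf-parent, which by definition lies in $N_T(L)$ and supports exactly one leaf, so there are at most $p$ of them and they contribute total length strictly less than $p\mu\sqrt{n} < \mu^2 n$.

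Summing yields $n - 1 = e(T) < (\mu\varepsilon + 3\mu^2)n$. Since $\mu < 1/2$ gives $3\mu^2 < 3/4$, choosing $\varepsilon := (1 - 3\mu^2)/(2\mu) > 0$ (and then any smaller positive value) ensures $\mu\varepsilon + 3\mu^2 < 1$, producing the desired contradiction for $n$ large. The main obstacle, and the only genuinely subtle step, is the structural inequality $|B| \le 2|N_T(L)|$: a straightforward averaging of the $n-1$ edges over the $|L| + |B| - 1$ bare paths would only give a longest bare path of constant length, so it is essential to relate $|B|$ to $|N_T(L)| = O(\sqrt{n})$ rather than to $|L|$, which under the hypotheses can still be of order $\varepsilon\mu n$.
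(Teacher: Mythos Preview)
Your proof is correct and shares the paper's core idea: pass to $T'=T\setminus L$, observe that every leaf of $T'$ lies in $N_T(L)$ (the paper writes this as $S_1\subseteq N_T(L)$), and deduce that the number of branching vertices (equivalently, the number of maximal bare paths) is $O(|N_T(L)|)=O(\sqrt{n})$.

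The difference is in the final step. The paper works entirely inside $T'$: it bounds $|\mathcal{P}|<2|N_T(L)|$ for the family $\mathcal{P}$ of maximal bare paths in $T'$, and then pigeonholes the $|S_2|$ degree-two vertices of $T'$ over $\mathcal{P}$ to extract one long path. You instead pull the bound $|B|\le 2|N_T(L)|$ back to $T$ and then sum the lengths of maximal bare paths in $T$ after classifying them by endpoint type; the contradiction $n-1<(\mu\varepsilon+3\mu^2)n$ falls out. Your route is slightly longer but has the advantage of producing a bare path in $T$ directly, whereas the paper's pigeonhole yields a bare path in $T'$, and one should in principle still argue that (a long subpath of) it remains bare in $T$. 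Both approaches pick $\varepsilon$ as a fixed fraction of a function of $\mu$; the paper takes $\varepsilon=\mu/3$, you take $\varepsilon=(1-3\mu^2)/(2\mu)$.
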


\begin{proof}
Set $\epsilon=\mu /3$ and assume that $|N_T(L)|<\mu \sqrt{n}$.
We will show now that $T$ needs to contain a bare path of length at least $\mu\sqrt{n}$.
By our assumption, we obtain
\begin{align*}
|L| \leq \Delta(N_T(L))\cdot |N_T(L)| < \epsilon \mu n~ .
\end{align*}
Now, let $T'=T - L$ then $n':=|V(T')| \geq n-\mu \epsilon n > \frac{2n}{3}$.

    Let $S_i= \{v \in V(T')$ $|$ $d_{T'}(v)=i\}$ and $S_{\geq i}= \{v \in V(T')$ $|$ $d_{T'}(v) \geq i\}$ for every $i\in [n]$, and observe that $S_1 \subseteq N_T(L)$. Further, let $\mathcal{P}$ be the collection of maximal bare paths in $T'$ and let $\tilde{T}$
    be the tree obtained from $T'$ by contracting each path in $\mathcal{P}$ to an edge. Then
    \begin{equation*}
        |\mathcal{P}| = e(\tilde{T})
        	= v(\tilde{T}) - 1 = |S_1| + |S_{\geq 3}| - 1.
    \end{equation*}
    Also, by the Handshake Lemma it holds that
    \begin{align*}
        2(n'-1) = 2e(T') &\geq |S_1|+ 2|S_2|+3|S_{\geq 3}| \\
        &  = 2 (|S_1| + |S_2|+ |S_{\geq 3}|)+ |S_{\geq 3}| - |S_1| 
           = 2n' + |S_{\geq 3}| - |S_1| ,
    \end{align*}
    leading to $|S_{\geq 3}| < |S_1|$ and hence
    \begin{equation*}
    |\mathcal{P}| < |S_1| + |S_{\geq 3}| 
    			< 2|S_1| \leq 2|N_T(L)| < 2\mu\sqrt{n} 
    		\leq \mu \sqrt{6n'}~ .
    \end{equation*}
	By the Pigeonhole Principle and since each vertex from $S_2$
	belongs to exactly one path in $\mathcal{P}$, 
	there exists a bare path of length at least
    \begin{equation*}
    \frac{|S_2|}{|\mathcal{P}|}= \frac{n'-|S_1|-|S_{\geq 3}|}   
    {\mathcal{|P|}} \geq 
    \frac{n'-\mu \sqrt{6n'}}{\mu \sqrt{6n'}}
    > \mu \sqrt{n}
    \end{equation*}
    where last inequality uses that $\mu < \frac{1}{2}$,
    $n'>\frac{2}{3}n$ and that $n$ is 
    large enough.
\end{proof}

Theorem~\ref{thm:tree} will follow from the next slightly stronger result
which will be used later as well for the study of the tree-factor game.

\begin{thm}\label{thm:tree_stronger}
There exists $\epsilon>0$ such that the following holds for every large enough integer $n$:
Let $T$ be a tree on $n$ vertices and let $v\in V(T)\setminus (L\cup N_T(L))$ be such that the following holds:
\begin{enumerate}
    \item $d_T(v) \leq \frac{n}{3}$ and ,
    \item $\Delta (T \backslash \{v\} ) \leq \epsilon \sqrt{n}$.
\end{enumerate}
Moreover, let $p\in V(K_n)$. Then, in an unbiased Waiter-Client game on $K_n$, Waiter has a strategy to force Client to claim a copy of $T$ within $n$ rounds such that
\begin{itemize}
\item[(a)] in Client's copy of $T$, the vertex $p\in V(K_n)$ represents the vertex $v\in V(T)$, and
\item[(b)] in each round of her strategy, Waiter offers either 2 edges or no edge incident to $p$.
\end{itemize}
\end{thm}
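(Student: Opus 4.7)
The plan is to peel off a ``nice'' substructure from $T$ using Lemma~\ref{lem:tree-property}, embed the remainder $T'$ greedily in a first stage, and realise the substructure in a second stage via Theorem~\ref{WCUPM} or (a prescribed-endpoints variant of) Theorem~\ref{WCUHC}. Since $v\notin L\cup N_T(L)$ and $\Delta(T\setminus\{v\})\leq\varepsilon\sqrt{n}$, we have $\Delta(N_T(L))\leq\varepsilon\sqrt{n}+1$, so the lemma applies (for $\mu\in(0,1/2)$ chosen small in terms of $\varepsilon$) and yields two cases. In Case~A there is a set $L^*\subseteq L$ of size $\mu\sqrt{n}$ with pairwise distinct parents, and we set $T'=T\setminus L^*$; in Case~B there is a bare path $P=(x_0,\ldots,x_k)$ of length $k\geq\mu\sqrt{n}$, and we set $T'=T\setminus\{x_1,\ldots,x_{k-1}\}$. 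In both cases $v\in V(T')$ (in Case~B we choose $P$ so as to avoid $v$ as an interior vertex, which is automatic when $d_T(v)\geq 3$ and costs at most $O(1)$ in length when $d_T(v)=2$). Note that $T'$ is connected in Case~A and has two components in Case~B.

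\textbf{Stage~1.} Waiter partitions $V(K_n)=A\sqcup B$ with $p\in A$ and $|A|=v(T')+\Theta(\sqrt{n})$, then embeds $T'$ into $A$ vertex by vertex. She first declares $p$ to be the image of $v$ and embeds the $d_T(v)\leq n/3$ children of $v$ one by one, each round offering a pair $\{pu_1,pu_2\}$ with $u_1,u_2\in A\setminus M_{\text{old}}$. This is the only place in the strategy where Waiter offers edges at $p$, and doing so in pairs is exactly what property~(b) demands. The remaining vertices of $T'$ are then embedded in BFS order, offering at each round two free edges $\{p_{\pi(i)}u_1,p_{\pi(i)}u_2\}$ within $A$; at rounds where $p_{\pi(i)}$ lies in $M':=\{p_x:x\in N_T(L^*)\}$ (in Case~A), Waiter carefully selects $u_1,u_2$ from a pre-scheduled candidate pool so that the non-chosen endpoint is guaranteed to become the image of some later vertex of $T'$, and hence ends up in $M$. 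Combined with $\Delta(T\setminus\{v\})\leq\varepsilon\sqrt{n}$ and the over-allocation $|A|-v(T')=\Theta(\sqrt{n})$, feasibility holds at every round, including the very last. In Case~B the two components of $T'$ are embedded consecutively, starting with the one containing $v$. By construction every Stage~1 offer has both endpoints in $A$, so $W\cap E(A,B)=\emptyset$ and $W\cap E(M',A\setminus M)$ has only $O(1)$ edges at the end of Stage~1.

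\textbf{Stage~2 and round count.} After Stage~1 (consuming $v(T')-1$ rounds in Case~A and $v(T')-2$ in Case~B) the remaining free vertices $R=(A\setminus M)\cup B$ have size $\mu\sqrt{n}$ in Case~A and $\mu\sqrt{n}-1$ in Case~B. In Case~A Waiter now plays the bipartite perfect matching game between $M'\subseteq A$ and $R$, both of size $\mu\sqrt{n}$; since $W\cap E(M',B)=\emptyset$ and the careful Stage~1 bookkeeping gives $e(W\cap E(M',A\setminus M))=O(1)\ll\mu\sqrt{n}/2$, Theorem~\ref{WCUPM} applies and forces the matching in a further $\mu\sqrt{n}+1$ rounds. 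In Case~B Waiter instead forces a Hamilton path from $p_{x_0}$ to $p_{x_k}$ through $R$, by adapting the path-building (Stages I--II) and the P\'osa-rotation closing (Stage III) of Theorem~\ref{WCUHC} to the prescribed-endpoints setting, in a further $\mu\sqrt{n}+1$ rounds. In both cases Stage~2 involves no offer incident to $p$ (since $v\notin N_T(L^*)$ in Case~A and $v\ne x_0,x_k$ in Case~B), preserving property~(b). The total round count is $n$ in both cases.

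\textbf{Main obstacle.} The delicate point is the interface between the two stages: because Theorem~\ref{WCUPM} is inherently inefficient by one round, Waiter must finish Stage~1 with zero waste while simultaneously keeping the hypothesis $e(H)\leq |M'|/2$ of Theorem~\ref{WCUPM} satisfied. The first constraint is handled by over-allocating $|A|$ by $\Theta(\sqrt{n})$ vertices, so that $|A\setminus M_{\text{old}}|$ exceeds the parent's Waiter-degree (which may be as large as $\varepsilon\sqrt{n}$) even at the last Stage~1 round. The second --- and the real technical work --- is handled by pre-scheduling a candidate image for every vertex of $T'$, so that Waiter's ``left-over'' edges at $M'$-vertices always point into $M$ rather than into $A\setminus M$, keeping $e(H)$ essentially zero. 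The remaining subtlety, adapting Theorem~\ref{WCUHC} to force a Hamilton path with prescribed endpoints in Case~B, is handled by the same P\'osa-rotation mechanism already used in its proof.
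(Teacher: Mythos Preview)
Your high-level plan matches the paper's: split via Lemma~\ref{lem:tree-property}, embed $T'$ greedily, then finish with Theorem~\ref{WCUPM} or a Hamilton-path variant of Theorem~\ref{WCUHC}. The round arithmetic is correct. But the two interface mechanisms you propose are precisely where the work lies, and both have genuine gaps.

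In your matching case, the ``pre-scheduling'' device does not work as written. You always offer $\{p_{\pi(i)}u_1,p_{\pi(i)}u_2\}$ from one parent to two candidates and assert that the rejected candidate can be guaranteed to enter $M$ later; but Waiter has no control over which of $u_1,u_2$ Client takes, and each subsequent offer of the leftover vertex may again be refused. Nothing in a one-parent/two-candidate BFS forces any particular available vertex into the image. The paper uses a different offer pattern: whenever two open vertices $u_1,u_2$ exist she offers $\{au_1,au_2\}$ for a \emph{single} available vertex $a$ (preferring $a$ with $d_W(a,f(S'))\geq 1$), so that whichever edge Client picks, $a$ is embedded and Waiter's edge lands inside the image rather than in $A$. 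When only one open vertex remains, two further subcases and a distance-$2$ condition on the chosen leaf-parents (obtained by intersecting $N_T(L_0)$ with a parity class of distances from $v$) keep the invariant $e_W(f(S'),A)\leq\varepsilon\sqrt n+1$.

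In your bare-path case, ``adapting Theorem~\ref{WCUHC} to prescribed endpoints'' is not free: with your $T'$ the anchors $p_{x_0},p_{x_k}$ may already carry up to $\varepsilon\sqrt n$ Waiter edges into $R$ (from embedding their $T'$-children), while Theorem~\ref{WCUHC} assumes a clean board. The paper sidesteps this by including the edges $x_0x_1$ and $x_{k-1}x_k$ in $T'$, so that $x_1,x_{k-1}$ are \emph{leaves} of $T'$; once embedded, Waiter never offers another edge at $f(x_1),f(x_{k-1})$, giving $e_{C\cup W}(A')=0$ for $A'=A\cup\{f(x_1),f(x_{k-1})\}$. She then runs Theorem~\ref{WCUHC} on $K_n[A']$ with a fake first round declaring $f(x_1)f(x_{k-1})$ to be Client's edge; property~(2) of that theorem guarantees this fake edge lies on the forced Hamilton cycle, yielding the desired Hamilton path in exactly $|A'|$ real rounds.
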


\begin{proof}
Let $\mu = \frac{1}{3}$ and choose $\varepsilon < \frac{\mu}{20}$ 
according to Lemma~\ref{lem:tree-property}. 
Then, given a tree $T$ with the properties from the theorem above,
there exists a bare path of length at least $\mu\sqrt{n}$ in $T$ (Case A) 
or we have $|N_T(L)| \geq \mu \sqrt{n}$ (Case B). 
We provide a different strategy for Waiter for each case.

\medskip

In order to describe Waiter's strategy,
we use notation similar to that from~\cite{CFGHL2015} and~\cite{FHK2012}. 
 Let $S \subseteq V(T)$ be an arbitrary set, then an $S$-\textit{partial embedding} of $T$ in $G$ is an injective mapping $f: S \rightarrow V(G)$ such that $f(x)f(y)$ is an edge in  $G$ whenever $xy$ is an edge in $T$. Vertices in $S$ are called \textit{embedded} vertices. 
Let any subgraph $T'\subset T$ be given. 
Then a vertex $v \in f(S)$ is called \textit{closed} with respect to $T'$ if all the neighbours of $f^{-1}(v)$ in $T'$ are embedded as well. Otherwise $v$ is called \textit{open} w.r.t. $T'$. With $\mathcal{O}_{T'}$ we denote the set of all vertices that are open w.r.t. $T'$. Moreover, the vertices of the set $A:=V(G) \setminus f(S)$ are called \textit{available}.

\medskip

If at any point during the game, Waiter is unable to follow the strategy, she forfeits the game. (We will see later that this does not happen.)

\medskip\medskip\medskip\medskip

\textbf{Case A -- Long bare path.} Consider first the case when $T$ contains a bare path of length at least $\mu \sqrt{n}$. 
Let $P$ be such a path of length $\mu \sqrt{n}$, and denote its endpoints 
with $u$ and $w$. Let $u_1$ and $w_1$ be the neighbours of $u$ and $w$ in $P$ respectively. $T\backslash P$ is a forest with two tree components, say $T_1$ and $T_2$. We let $T'\subset T$ be the forest induced by $E(T_1)\cup E(T_2)\cup \{uu_1,ww_1\}$. W.l.o.g. we may assume that both $v$ and $u$ belong to $T_1$.

\medskip

In broad terms, Waiter's strategy is to first force a copy of $T'$ 
(Stage I and II) and then to force a copy of the bare path $P$ (Stage III) 
in such a way that a copy of $T$ is created within $n$ rounds. Throughout the game, she maintains a set $S$ and an $S$-partial embedding $f$ of $T$ into $K_n$
in order to represent the subgraph of $T$ which currently is isomorphic to Client's graph. 
Initially, set $S = \{v, w\}$, $f(v)=p$ and $f(w)=q$ for arbitrary $p,q \in V(K_n)$.
Waiter's strategy is split into the following stages:

\medskip

\textbf{Stage I:} This stage lasts for $d_T(v)$ rounds in which Waiter closes the vertex $v$ w.r.t.~$T$. Each round is played as follows: 

First, Waiter fixes an arbitrary vertex $t\in N_T(v)\setminus S$.  
Waiter then offers two edges $pa_1,pa_2$ such that both edges are free and $a_1,a_2 \in A$. By symmetry, assume that Client chooses the edge $pa_1$. 
Then Waiter updates $A$, $S$ and $f$
by removing $a_1$ from $A$, adding $t$ to $S$ and setting $f(t):=a_1$.

\medskip

\textbf{Stage II:} This stage lasts $e(T')-d_T(v)$ rounds in which
it is Waiter's goal to create a $V(T')$-partial embedding. For each round, she
plays as follows: 

If $S= V(T')$ holds, Waiter proceeds to Stage III. 
Otherwise, fix an arbitrary vertex $x\in f(S\setminus \{u_1,w_1\}) \cap \mathcal{O}_{T}$ and let $t=f^{-1}(x)$. Since $x$ is open, there exists a vertex 
$z \in (V(T_1) \cup V(T_2) \cup \{u_1,w_1 \}) \setminus S$ such that $tz \in E(T')$.  Waiter then offers two free edges $a_1x$ and $a_2x$ such that
$a_1,a_2 \in A$. By symmetry, we may assume that Client chooses $a_1x$. 
Then Waiter updates $A$, $S$ and $f$ by removing $a_1$ from $A$, adding $z$ 
to $S$ and setting $f(z):=a_1$.  Afterwards, she repeats Stage II.

\medskip
     
\textbf{Stage III:} When Waiter enters Stage III, Client's graph is a copy of
the subgraph $T'$.
Within $n-e(T')$ rounds, Waiter now forces a Hamilton path
on $V(K_n)\setminus f(V(T_1)\cup V(T_2))$ with endpoints $f(u_1)$ and $f(w_1)$.
The details of how Waiter can do this are given later in the strategy discussion.\\

{\bf Strategy Discussion:}
It is obvious that if Waiter can follow the given strategy without forfeiting the game, 
she forces a copy of $T$ within at most $d_T(v)+(e(T')-d_T(v))+(n-e(T'))=n$ rounds. Hence, it remains to show that Waiter can indeed do so.
However, before we study each stage separately, let us observe the following:

\begin{obs}\label{obs:tree_caseA}
Throughout Stages I and II, as long as Waiter can follow the proposed strategy, it holds that
\begin{enumerate}
\item[(i)] $|A|\geq \mu \sqrt{n} - 2$ and $e_{C\cup W}(A)=0$,
\item[(ii)] $d_W(x,A)\leq d_C(x)$ for every $x\in f(S)$.
\end{enumerate}
\end{obs}

\begin{proof}
Property (i) is immediately clear. The inequality $|A|\geq \mu \sqrt{n} - 2$ holds, since the strategy for the mentioned stages is to force a copy of $T'$ without wasting any move and since 
$e(T')=e(T_1\cup T_2)+ |\{uu_1, ww_1 \}| = n- \mu \sqrt{n} +2$.
The equation $e_{C\cup W}(A)=0$ holds, since Waiter always only offers edges
that intersect $f(S)$.
For Property~(ii), observe that $d_W(x,A)$ may only increase, when
$x\in f(S)$ (since $e_{C\cup W}(A)=0$) and Waiter offers an edge
between $x$ and $A$. However, when this happens in any of the Stages I and II,
Waiter actually offers two edges between $x$ and $A$, 
which increases $d_C(x)$ by $1$ at the same time. 
Hence, $d_W(x,A)$ cannot become larger than $d_C(x)$.
\end{proof}

\medskip

In the following we check that Waiter always can follow the strategy without forfeiting the game.\\

{\bf Stage I:} 
According to the strategy, Waiter needs to offer $2d_T(v) \leq \frac{2n}{3}$ edges at $v$. She can easily do so, since there exists $n-1$ edges to choose from.

\medskip

{\bf Stage II:} The vertex $z$, which is described in the strategy, exists because of our assumption that $x$ is an open vertex. Moreover, by Observation~\ref{obs:tree_caseA},
we have $d_W(x,A) \leq d_C(x) \leq d_T(f^{-1}(x))\leq \varepsilon \sqrt{n}$, which in turn means that at least 
$|A| - \varepsilon \sqrt{n} > \mu\sqrt{n} - 2 - \varepsilon \sqrt{n} > \frac{\mu}{2} \sqrt{n}$ 
edges between $x$ and $A$ are free. 
Hence there exist 
two free edges $xa_1$ and $xa_2$ as desired and Waiter can follow the proposed strategy.

\medskip

{\bf Stage III:} When Waiter enters Stage~III, she has successfully 
managed to force a copy of $T'$. 
Let $A'=V(K_n)\setminus f(V(T_1)\cup V(T_2))$ and observe that
$e_{C\cup W}(A')=0$ holds right at this moment.
Indeed, according to Observation~\ref{obs:tree_caseA} we have $e_{C\cup W}(A)=0$. Moreover, $e_{C\cup W}(\{f(u_1), f(w_1) \},A)=0$ holds, since in Stage~II Waiter always chooses $x$ different from $f(u_1)$ and $f(w_1)$, which in turn ensures that, once these vertices are embedded,
Waiter never offers edges incident to those again. 

\medskip

For Stage~III, Waiter now plays as follows. At first she considers a fake round
which is not played at all but where Waiter pretends that Client claimed the edge $e_C:=f(u_1)f(w_1)$.
Afterwards, she continues according to the strategy from Theorem~\ref{WCUHC}
(with $K_n$ replaced by $K_n[A'] \cong K_{|A'|}$), which ensures that within $|A'|+1$ rounds there is a Hamilton cycle in Client's graph on $A'$ 
which contains the edge $e_C$. Since the first round was a fake round,
this actually means that, within $|A'|=n-e(T')$ rounds, 
Waiter obtains a Hamilton path in $A'$ as desired.

\medskip\medskip\medskip\medskip

\textbf{Case B -- Many leaf neighbours.} Consider next the case when
$|N_T(L)| \geq \mu \sqrt{n}$ holds. Then, there exists a matching $M_0$ of size 
at least $\mu \sqrt{n}$ which consists of edges that are incident to leaves of $T$. Define the sets $L_0:=L\cap V(M_0)$,
$D_i=\{w\in V(T): \dist_{T}(v,w)=i\}$, 
$D_{\text{odd}}:=\bigcup_{i\text{ odd}} D_i$ and 
$D_{\text{even}}:=\bigcup_{i\neq 0 \text{ even}} D_i$.
By the Pigeonhole Principle we have that there is a set
$D_{\text{good}}\in \{D_{\text{odd}},D_{\text{even}}\}$ such that 
$D_{\text{good}}\cap N_T(L_0)$ has size at least $\mu' \sqrt{n}$ with
$\mu':=\mu/3$. Let $M'=\{e\in M_0: e\cap D_{\text{good}}\neq \varnothing\}$, $L'=L\cap V(M')$ and $T'=T-L'$. By the choice of $D_{\text{good}}$ we have that
$|M'|\geq \mu' \sqrt{n}$, $v\notin V(M')$ and 
$\dist_{T'}(x,y)\geq 2$ for every $x,y\in N_T(L')$.

\medskip

In broad terms, Waiter's strategy now is to first force a copy of $T'$ (Stage I and II) and then to force a copy of the matching $M'$ (Stage III) in such a way that a copy of $T$ is created within $n$ rounds. Throughout the game, she again maintains a set $S$ and an $S$-partial embedding $f$ of $T$ into $K_n$
in order to represent the subgraph of $T$ which currently is isomorphic to Client's graph. 
Initially, set $S = \{v\}$ and $f(v)=p$ for an arbitrary $p \in V(K_n)$.
Additionally, at any moment in the game we define $S':=S\cap N_T(L')$.

\medskip

\textbf{Stage I:} This stage lasts for $d_{T'}(v)=d_T(v)$ rounds in which Waiter closes the vertex $v$ w.r.t.~$T'$. Each round is played as follows: 

First, Waiter fixes an arbitrary vertex $t\in N_{T'}(v)\setminus S$.  
Waiter then offers two edges $pa_1,pa_2$ such that both edges are free and $a_1,a_2 \in A$. By symmetry, assume that Client chooses the edge $pa_1$. 
Then Waiter updates $A$, $S$ and $f$
by removing $a_1$ from $A$, adding $t$ to $S$ and setting $f(t):=a_1$.

\medskip

\textbf{Stage II:} 
This stage lasts $e(T')-d_{T'}(v)$ rounds in which
it is Waiter's goal to create a $V(T')$-partial embedding,
while also taking care of the distribution of Waiter's edges
between the open and the available vertices. For each round, she
plays as follows:  

If $S = V(T')$ then Waiter proceeds to Stage III. Otherwise,
Waiter considers the following case distinction:\\

\begin{enumerate}
\item[] {\bf Case 1.} Let there be two vertices $u_1,u_2\in \mathcal{O}_{T'}$, and let $t_1=f^{-1}(u_1)$ and $t_2=f^{-1}(u_2)$. By assumption, there exist vertices $z_1, z_2 \in  V(T') \setminus S$ such that $t_1z_1, t_2z_2 \in E(T')$.  Waiter then picks any vertex $a \in A$ such that $au_1$ and $au_2$ are free, where she prefers vertices satisfying $d_W(a,f(S'))\geq 1$,
 and offers $au_1$ and $au_2$ to Client. 
By symmetry we may assume that Client chooses $au_1$. 
Then Waiter updates $A$, $S$ and $f$ by removing $a$ from $A$, adding $z_1$ to $S$ and setting $f(z_1):=a$ . 

\medskip

\item[] {\bf Case 2.} Let there be only one vertex $u\in \mathcal{O}_{T'}$, but assume that $u\notin f(S')$. Let $t=f^{-1}(u)$.
By assumption, there exists a vertex $z \in  V(T') \setminus S$ such that 
$tz \in E(T')$. Waiter then picks any vertices $a_1,a_2 \in A$ 
such that $a_1u$ and $a_2u$ are free, where she prefers vertices satisfying $d_W(a_i,f(S'))\geq 1$, and offers these two edges to Client. 
By symmetry we may assume that Client chooses $a_1u$. Then Waiter updates $A$, $S$ and $f$ by removing $a_1$ from $A$, adding $z$ to $S$ and setting $f(z):=a_1$.

\medskip

\item[] {\bf Case 3.} Let there be only one vertex $u\in 
\mathcal{O}_{T'}$, and moreover assume that $u\in f(S')$. Let $t=f^{-1}(u)$.
By assumption, there exists a vertex $z \in  V(T') \setminus S$ such that 
$tz \in E(T')$, and among such vertices we choose $z$ such that $d_{T'}(z)$
is maximal. Waiter then picks any vertices $a_3,a_4 \in A$ 
such that $a_3u$ and $a_4u$ are free and such that $d_W(a_3,f(S'))=d_W(a_4,f(S'))=0$, and offers these two edges to Client. 
By symmetry we may assume that Client chooses $a_3u$. Then Waiter updates $A$, $S$ and $f$ by removing $a_3$ from $A$, adding $z$ to $S$ and setting $f(z):=a_3$.
\end{enumerate}    

\medskip
  
Afterwards, Waiter repeats Stage II.  
     
\medskip

\textbf{Stage III:} 
When Waiter enters Stage III, Client's graph is a copy of
the subgraph $T'$.
Within $e(M')+1$ rounds, Waiter now forces a perfect matching between
$V(K_n)\setminus f(V(T'))$ and $f(N_T(L'))$.
The details of how Waiter can do this are given later in the strategy discussion.\\     
       
{\bf Strategy Discussion:} 
It is obvious that, if Waiter can follow the given strategy without forfeiting the game, 
she forces a copy of $T$ within at most 
$d_T(v)+e(T')-d_T(v)+e(M')+1=e(T)+1=n$ rounds. Hence, it remains to show that Waiter can indeed do so.
To this end, we define a vertex $u$ to be a \textit{stopping vertex} if 
$u \in f(S)$ and if for the vertex 
$t=f^{-1}(u)$ the following holds: 
for every $y \in N_{T'}(t) \setminus S$ we have $d_{T'}(y)=1$. 
We first observe the following which will help us later to show that
Waiter can follow the proposed strategy.

\begin{obs}\label{obs:tree_caseB}
Throughout Stages I and II, as long as Waiter can follow the proposed strategy, it holds that
\begin{enumerate}
\item[(i)] $|A|\geq \mu' \sqrt{n} - 2$ and $e_{C\cup W}(A)=0$,
\item[(ii)] $d_W(x,A)\leq d_C(x)$ for every $x\in f(S)$, and
\item[(iii)] $d_W(x,f(S'))\leq 1$ for every $x\in A$.
\item[(iv)] Assume that so far $\mathcal{O}_{T'}$ 
did not consist solely of a stopping vertex. Then $e_W(f(S'),A) \leq \varepsilon \sqrt{n} + 1$ holds and moreover, if
$e_W(f(S'),A) = \varepsilon \sqrt{n} + 1$ holds at the end of any round,
then at the end of the following round it holds that
$e_W(f(S'),A) \leq \varepsilon \sqrt{n}$.
\end{enumerate}

\end{obs}

\begin{proof}
Property~(i) is proven analogously to Property~(i) from Observation~\ref{obs:tree_caseA}.\\
For Property~(ii), observe that $d_W(x,A)$ may only increase, when
$x\in f(S)$ (since $e_{C\cup W}(A)=0$) and Waiter offers an edge
between $x$ and a vertex $a\in A$. If the latter happens in 
Case~1 of Stage~II
(with $x$ being one of the vertices $u_1$ and $u_2$), then the vertex
$a$ is removed from $A$ by the update for that case and hence, $d_W(x,A)$
does not increase at all. Otherwise, in Stage I or in Case~2 or Case~3
of Stage~II, Waiter actually offers two edges between $x$ and $A$, 
which makes $d_C(x)$ increase by 1 at the same time. 
Hence, $d_W(x,A)$ cannot become larger than $d_C(x)$.

\medskip

Let us consider (iii) then.
In Stage I, Waiter does not claim any edges between
$A$ and $f(S')$, since $v\notin N_T(L')$. Hence,
$d_W(x,f(S'))=0$ holds for every $x\in A$ at the end of Stage I.
We now proceed by induction, looking at any round in Stage II.
In Case~1, Waiter w.l.o.g.~gets the edge 
$au_2$ 
with $u_2\in f(S)$ and $a\in A$, but then $a$ is moved from the set $A$ 
to the set $f(S)$ (and hence maybe to the set $f(S')$) 
by the update of that case. However, since
$e_W(a,A)=0$ holds by Property~(i) for the previous round, we
conclude that $d_W(x,f(S'))$ stays unchanged for every $x$ 
which remains in the set $A$.
In Case~2 or Case~3 of Stage~II, Waiter w.l.o.g.~gets the edge $ua_2$ or $ua_4$
with $u\in f(S)$, Client gets the edge
$ua_1$ or $ua_3$, while $a_i\in A$ for every $i\in [4]$, 
and then $a_1$ or $a_3$ is moved from the set $A$ to the set
$f(S)$ (and hence maybe to the set $f(S')$) by the update of that case. 
But then, in Case~2, since $e_W(a_1,A)=0$ holds by Property~(i) for the previous round and since $u\notin f(S')$ by assumption of that case, 
we conclude that $d_W(x,f(S'))$ does not increase for any $x$ 
which remains in the set $A$. Moreover, in Case~3,
since $e_W(a_3,A)=0$ holds analogously and $d_W(a_4,f(S'))=0$
was true at the end of the previous round (by the choice of $a_4$ in that case),
we conclude that after following Case~3 we have $d_W(a_4,f(S'))=1$ and
$d_W(x,f(S'))$ does not increase for any $x\neq a_4$ 
which remains in the set $A$. Hence, in either case, the degrees
$d_W(x,f(S'))$ never exceed $1$ for the vertices $x\in A$.

\medskip

It remains to verify Property~(iv). By the discussion above for Property~(iii)
we see that $e_W(f(S'),A)$ can only increase in Case~3 of Stage~II,
and if it does then 
it increases by exactly~1. Hence, it is enough to show that
if $e_W(f(S'),A)=\varepsilon \sqrt{n} + 1$ holds at the end of any round $r$
and if at this moment $\mathcal{O}_{T'}$ still does not solely consist 
of a stopping vertex,
then $e_W(f(S'),A)\leq \varepsilon \sqrt{n}$ will hold
at the end of round $r+1$.
So, assume the mentioned conditions hold. 
Then, round $r$ was played according to Case~3 of Stage~II.
That is, at the beginning of round $r$ there was only one vertex $u\in f(S)$
which was open w.r.t. $T'$, and moreover $u\in f(S')$. By assumption
the vertex $u$ was not a stopping vertex. That is, for $t=f^{-1}(u)$
we could find a vertex $y\in N_{T'}(t)\setminus S$ such that $d_{T'}(y)\geq 2$.
In round $r$, Waiter played according to Case~3 of Stage~II, thus
the vertex $z$ (for the strategy described in Case~3) with $tz\in E(T')$ 
was chosen such that $d_{T'}(z)\geq 2$.
Waiter then fixed vertices $a_3,a_4 \in A$ 
such that $a_3u$ and $a_4u$ were free and 
she offered these two edges to Client. 
By symmetry, we may assume that Client chose $a_3u$ and the other edge $a_4u$
was added to Waiter's graph. Then Waiter updated $A$, $S$ and $f$ by removing $a_3$ from $A$, adding $z$ to $S$ and setting $f(z)=a_3$. Using that $d_{T'}(z)\geq 2$, we conclude that $a_3$
needs to be open w.r.t. $T'$ at the end of round $r$.
Moreover, using Property~(i),
we also get that $d_W(a_3,A)=0$ holds at this moment.
For round $r+1$, two possible cases 
may occur now.
The first case is that $u$ is still an open vertex w.r.t. $T'$
at the beginning of round $r+1$. Then this round is played according to Case~1
with $\{u_1,u_2\}=\{u,a_3\}$. If Waiter can follow the strategy, 
we already know from the discussion of Property~(iii) that
$d_W(x,f(S'))$ stays unchanged for every $x$ 
which remains in the set $A$. However, by the strategy of Case~1, 
it also happens that Waiter picks some vertex $a\in A$ with $d_W(a,f(S'))\geq 1$
such that $u_1a$ and $u_2a$ are free. Note that such a vertex $a$ exists
since by Property~(iii) and under assumption of $e_W(f(S'),A)=\varepsilon \sqrt{n} + 1$ there exist $\varepsilon \sqrt{n} + 1$ vertices $a\in A$
with $d_W(a,f(S'))\geq 1$, while 
$$
d_W(u,A) + d_W(a_3,A) \stackrel{\text{(ii)}}{\leq} 
d_C(u) + 0 \leq d_T(f^{-1}(u)) \leq \varepsilon \sqrt{n}~ .
$$
At the end of round $r+1$ the vertex $a$ 
gets removed from $A$, and hence $e_W(A,f(S'))$ gets reduced by 
$d_W(a,f(S'))\geq 1$.\\
The second case is that $u$ is not an open vertex w.r.t. $T'$
at the beginning of round $r+1$, and hence $a_3$ is the only
open vertex w.r.t. $T'$ at that point. Since $t=f^{-1}(u)\in N_T(L')$ holds
and $tz\in E(T')$ holds for the vertex $z=f^{-1}(a_3)$
we know that $z\notin N_T(L')$ by the choice of $M'$ and $L'$.
Hence $a_3\notin f(S')$ and thus, in round~$r+1$, Waiter plays according to Case~2 (with $u:=a_3$). 
That is, Waiter then offers two edges $a_1a_3$ and $a_2a_3$
such that $a_1,a_2 \in A$ and such that 
$d_W(a_i,f(S'))\geq 1$ holds for $i\in [2]$ 
(which is possible since $d_W(a_3,A)=0$ and since
there exist $\varepsilon \sqrt{n} + 1$ vertices $a\in A$
with $d_W(a,f(S'))\geq 1$).
By symmetry we may assume that Client chooses $a_1a_3$.
Then $a_1$ gets removed from $A$ by the update of that case,
making sure that  $e_W(A,f(S'))$ gets reduced by $d_W(a_1,f(S'))\geq 1$. 
\end{proof}

\medskip

With Observation~\ref{obs:tree_caseB} in hand, we can check easily that
Waiter can follow the proposed strategy without forfeiting the game.

\medskip

{\bf Stage I:} According to the strategy, Waiter needs to offer $2d_T(v) \leq \frac{2n}{3}$ edges at $v$. She can easily do so, since there exists $n-1$ edges to choose from.

\medskip
 
{\bf Stage II:} Assume Waiter needs to make a move according to Stage~II,
but she could follow her strategy in all the previous rounds.
Further, let us assume first that $\mathcal{O}_{T'}$ does not solely consist
of a stopping vertex yet.
In Case~1, when there exists $u_1,u_2\in \mathcal{O}_{T'}$,
we 
know that
$$
d_W(u_1,A)+d_W(u_2,A) \stackrel{\text{(ii)}}{\leq} 
d_C(u_1) + d_C(u_2)
\leq d_{T'}(f^{-1}(u_1)) + d_{T'}(f^{-1}(u_2)) \leq 2\varepsilon \sqrt{n} < 
|A|~ ,
$$
where the last inequality uses Property~(i) and $\varepsilon < \frac{\mu}{20} < \frac{\mu'}{6}$.
Hence, Waiter can find a vertex $a\in A$ such that $u_1a$ and $u_2a$ are free edges, and hence she can follow her strategy in that case.
In Case~2 and Case~3, when there exists a unique vertex $u\in \mathcal{O}_{T'}$
which is not a stopping vertex,
we similarly obtain that $d_W(u,A)+e_W(A,f(S'))<|A|-2$
by Properties~(i)--(iv), and hence
Waiter can find vertices $a_1,a_2$ or $a_3,a_4$ as required to follow the strategy.

Now let us assume that at some point $\mathcal{O}_{T'}$ solely exists of a stopping vertex $u$. Then, in order to finish with Stage~II, only 
the vertex $u$ needs to get closed w.r.t. $T'$. As this takes at most 
$d_{T'}(f^{-1}(u)) \leq \varepsilon \sqrt{n}$ rounds played according to Case~2 or Case~3, while the Properties~(ii) and (iv) were true before that point,
we know that until the end of Stage~II, $e_W(f(S'),A)$ and $d_W(u,A)$ cannot exceed $2\varepsilon \sqrt{n} + 1$. But then, observing analogously that 
$$d_W(u,A) + e_W(f(S'),A) < 2(2\varepsilon \sqrt{n} + 1)<|A|-2~ ,$$
it follows that Waiter can find vertices $a_1,a_2$ or $a_3,a_4$ as desired
by her strategy.

\medskip

{\bf Stage III:}
When Waiter enters Stage III, Client's graph is a copy of
the subgraph $T'$. The sets
$A:=V(K_n)\setminus f(V(T'))$ and $B:=f(S')=f(N_T(L'))$ both
have size at least $\mu' \sqrt{n}=e(M')$. Moreover
$e_W(A,B)<2\varepsilon \sqrt{n}$ holds, as explained in the discussion of Stage~II, and $e_C(A,B)=0$. Following the strategy given for Theorem~\ref{WCUPM}, Waiter can force a perfect matching between $A$ and $B$
within $e(M')+1$ rounds. 
\end{proof}

\medskip

Having Theorem~\ref{thm:tree_stronger} in hands, we are now able to prove Theorem~\ref{thm:tree} and Theorem~\ref{thm:tree_factor}.\\

\textit{Proof of Theorem~\ref{thm:tree}.}
Let $n$ be large enough, and let $T$ be any tree on $n$ vertices
and with maximum degree at most $\varepsilon \sqrt{n}$.
The upper bound $\tau_{WC}(\mathcal{F}_T,1)\leq n$
follows from Theorem~\ref{thm:tree_stronger};
the lower bound $\tau_{WC}(\mathcal{F}_T,1)\geq n-1$ 
trivially holds since $e(T)=n-1$.\\
If $T$ is a path on $n$ vertices, then 
$\tau_{WC}(\mathcal{F}_T,1)= n-1$. 
Indeed, in the strategy given for Theorem~\ref{WCUHC},
Waiter forces a Hamilton path in the first round of Stage~III,
which is the $(n-1)^{\text{st}}$ round in the game.
This shows that the lower bound in Theorem~\ref{thm:tree} is tight.\\
If $T$ is a tree obtained from a path on $n-4$ vertices
by connecting two further vertices to each of its endpoints,
then $\tau_{WC}(\mathcal{F}_T,1) = n$. 
Indeed, if Waiter would want to force a copy of $T$
within $n$ rounds, 
then for some edge $e\in E(T)$ she would need 
to force a copy of $T-e$ within $n-2$ rounds. 
However, since a unique edge extends this copy of $T-e$ 
to a copy of $T$, 
Client can easily prevent the copy of $T$ 
in round $n-1$, because Waiter needs to offer two edges. This shows that the upper bound is tight.
\hfill $\Box$

\medskip\medskip

\textit{Proof of Theorem~\ref{thm:tree_factor}.}
Let $T$ be any tree on $k$ vertices, and let $H$ by a $T$-factor on $n$ vertices with $k|n$. 
Then $\tau_{WC}(\mathcal{F}_{n,T-fac},1)\geq e(H)=\frac{k-1}{k}n$.
In order to prove the upper bound on 
$\tau_{WC}(\mathcal{F}_{n,T-fac},1)$, let $H'$ by an arbitrary tree on $n'=n+1$ vertices, obtained from $H$ by adding one further vertex $v'$ and adding exactly one edge between $v'$ and each copy of $T$ in $H$. Waiter then pretends to play on 
$K_{n'}\supset K_n$ with $V(K_{n'})\setminus V(K_n)=\{p'\}$.
She plays according to the strategy given for Theorem~\ref{thm:tree_stronger} (with $v:=v'$ and $p:=p'$), and whenever this strategy makes her offer two edges 
incident to $p'$, she only pretends to play that round.
This way, she forces a copy of $H=H'-v'$ in the game on $K_n$,
wasting at most one round, and hence she wins within
$\frac{k-1}{k}n+1$ rounds.\\
For the tightness of both bounds, we can
use the same trees as in the proof of Theorem~\ref{thm:tree}. For large enough $k$, if $T$ is a path on $k$ vertices and $H$ is a $T$-factor, Waiter can win within $\frac{k-1}{k}n$. Before the game starts, she just splits the vertex set into $\frac{n}{k}$ sets of size $k$,
and then on each of these parts she forces a Hamilton path without wasting a move. On the other hand, if $T$ is a 
tree obtained from a path on $k-4$ vertices
by connecting two further vertices to each of its endpoints,
then analogously to the proof of Theorem~\ref{thm:tree} we get
$\tau_{WC}(\mathcal{F}_{n,T-fac},1)= \allowbreak \frac{k-1}{k}n+1$. 

\hfill $\Box$

\section{Unbiased triangle factor game}\label{sec:triangles}
In this section we give the proof of Theorem~\ref{thm:triangle_factor}. However, before doing so, let us first prove the following lemma.

\begin{lemma}
	\label{TFTwoTriangles}
Consider an unbiased Waiter-Client game on $K_{12}$, and fix any two vertices $u,v \in V(K_{12})$. Then Waiter has a strategy that, within $7$ moves, forces Client to create $2$ vertex disjoint triangles with the following additional properties:
\begin{enumerate}
	\item Both $u$ and $v$ are in the two triangles.
	\item All edges within the set of $6$ vertices, that are not in a triangle, and the edge $uv$ have not been offered.
\end{enumerate}
\end{lemma}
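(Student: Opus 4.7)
The plan is to have Waiter play adaptively on the ten helper vertices $V(K_{12})\setminus\{u,v\}$, which we label $a_1,a_2,a_3,a_4,b_1,b_2,b_3,b_4,c_1,c_2$. The first four rounds set up Client's degree at $u$ and $v$ symmetrically: Waiter offers the pairs $(ua_1,ua_2)$, $(vb_1,vb_2)$, $(ua_3,ua_4)$, $(vb_3,vb_4)$ in turn, and by relabelling each pair we may assume Client's four claims are $ua_1$, $vb_1$, $ua_3$, $vb_3$. In particular, after four rounds the edges $a_1a_3$ and $b_1b_3$ are both still free, and claiming either one would close a triangle in Client's graph, namely $\{u,a_1,a_3\}$ or $\{v,b_1,b_3\}$ respectively.

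The crux of the strategy is the round~$5$ fork: Waiter offers $(a_1a_3,\,b_1b_3)$. Whichever edge Client claims immediately completes one of the two required triangles. Suppose Client claimed $a_1a_3$, so that $\{u,a_1,a_3\}$ is already a triangle in Client's graph (the other subcase is fully symmetric, with the roles of the $a$'s and $b$'s and of $u,v$ swapped). Two rounds remain to produce a second triangle through $v$ that is vertex-disjoint from $\{u,a_1,a_3\}$, for which Waiter uses the two so far untouched vertices $c_1,c_2$. In round~$6$ she offers $(vc_1,vc_2)$ and WLOG Client claims $vc_1$; in round~$7$ she offers $(b_1c_1,\,b_3c_1)$, and either claim completes a triangle $\{v,b_i,c_1\}$ vertex-disjoint from $\{u,a_1,a_3\}$.

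Property~(2) follows from a single uniform observation: every edge Waiter offers during the seven rounds has at least one endpoint in the final triangle set of six vertices. Indeed, each of the offered edges is incident to $u$, $v$, $c_1$, or to one of $a_1,a_3,b_1,b_3$; the vertices $u,v,c_1$ always lie in a triangle, while $\{a_1,a_3\}$ is met by the eventual $u$-triangle and $\{b_1,b_3\}$ by the eventual $v$-triangle, regardless of which subcase occurred. Consequently no offered edge lies entirely inside the six leftover vertices, and $uv$ is never offered, as it is not among the pairs listed above.

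The main technical difficulty is the round~$5$ move. Rounds $1$--$4$ on their own force no triangle, and Client could in principle keep both potential triangles permanently half-built by always denying whichever closing edge is offered in isolation. Offering the two closing edges $a_1a_3$ and $b_1b_3$ \emph{simultaneously} forces Client to concede a completed triangle at once, after which the fresh vertices $c_1,c_2$ combined with the already aligned $b_1,b_3$ (or $a_1,a_3$ in the symmetric subcase) provide a clean ``book'' configuration that closes the second triangle in the two remaining moves.
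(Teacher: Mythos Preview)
Your proof is correct and follows essentially the same approach as the paper: four rounds to build vertex-disjoint cherries at $u$ and $v$, a fifth round offering the two closing edges simultaneously so that Client must complete one triangle, and then two final rounds using a fresh vertex to close the second triangle through the remaining special vertex. The labeling differs slightly, but the strategy and the verification of property~(2) are the same.
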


\begin{proof}
	Throughout the proof we will often use the fact, that Waiter can offer two edges from the same vertex to two new vertices. Then, by symmetry, Client's choice does not affect the rest of Waiter's strategy.	
	First Waiter forces Client to create two vertex disjoint cherries rooted at $u$ and $v$ respectively, which she can do by the above remark
	and since we have 12 vertices in total. Note that this leaves four other vertices connected to either $u$ or $v$ by an edge of Waiter. Suppose Client's graph now has the edge set $\{ub_1,ub_2,vb_3,vb_4\}$.
	Next, Waiter offers $b_1b_2$ and $b_3b_4$, forcing Client to close a triangle. Without loss of generality, we may assume that Client chooses $b_1b_2$, closing a triangle containing $u$. Afterwards, Waiter forces Client to take an edge incident to $v$, say $vw$, by offering two edges from $v$ to new vertices, and finally she offers the edges $wb_3$ and $wb_4$, forcing Client to close a second triangle which this time contains $v$. Note that
	both of the claimed properties are satisfied.
\end{proof}

\textit{Proof of Theorem~\ref{thm:triangle_factor}.}

For the lower bound on $\tau_{WC}(\mathcal{F}_{n,K_3-fac},1)$
we may provide a {\bf Client's strategy}. 
Throughout the game Client will maintain a set of {\em marked} vertices $M \subset V$, which is initially empty.
Moreover, we consider the sets $X_v:=\{xy:xv,vy\in E(C)\}$ for every $v\in V(K_n)$ and the set $X = \bigcup_v X_v$ which consists of those edges which would close a triangle in Client's graph. In the following we describe Client's strategy.

In any round of the game suppose that Waiter offers two edges $x_1y_1$ and $x_2y_2$ to Client. Then Client chooses his edge according to the following case distinction.

\begin{enumerate}
	\item[] {\bf Case 1:} Suppose at least one of the offered edges belongs to $E(K_n)\setminus X$. Then Client arbitrarily chooses such an edge.
	\item[] {\bf Case 2:} Suppose otherwise that there exist $z_1,z_2\in V(K_n)$ such that $x_1y_1 \in X_{z_1}$ and 
	$x_2y_2 \in X_{z_2}$. Client then considers three subcases:
	\begin{enumerate}[label=(\alph*)]
		\item If $z_1 \not \in M$, Client chooses the edge $x_2 y_2$ and adds $z_1$ to the set $M$.
		\item Otherwise, if $z_2 \not \in M$, Client chooses the edge $x_1 y_1$ and adds $z_2$ to the set $M$. 
		\item Otherwise, if $z_1,z_2 \in M$, then Client chooses his edge arbitrarily. 
	\end{enumerate}
\end{enumerate}

It is obvious that Client can always follow the proposed strategy. Hence, it remains to show that it prevents a triangle factor for at least $\frac{13}{12}n$ rounds.
We start with the following observation.

\begin{obs}
	\label{TFLemma}
	At the end of the game, let 
	$T=\{t_1,\ldots,t_{\frac{n}{3}}\}$ 
	be a triangle factor in $C$ 
	and let $U=\{t \in T : t \cap M = \varnothing\}$ 
	be the set of triangles, for which all three vertices are 
	not marked. Then the following properties hold:
	\begin{enumerate}[label=(\alph*)]
		\item $\forall m \in M$ we have $d_C(m) \ge 3$. \label{TFMHighDegree}
		\item $|M| \ge |U|$. \label{TFMIsLarge}
	\end{enumerate}
\end{obs}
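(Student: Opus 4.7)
The plan is to prove both parts by carefully tracking, for each vertex $m \in M$, the round in which $m$ was added to $M$. By the strategy, this round is either a Case 2(a) or Case 2(b) round in which Waiter offered an edge $e = xy$ with $xm, my \in E(C)$ at that moment (so $e \in X_m$), and in which Client chose the other offered edge, sending $e$ to Waiter's graph.

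For part~\ref{TFMHighDegree}, consider $m \in M$ with associated edge $e=xy$ as above. At the moment of the addition, $d_C(m) \geq 2$ because $mx, my \in E(C)$. Since Client picks the other offered edge, $e$ enters Waiter's graph, so $e \notin E(C)$ at the end of the game. Now $m$ belongs to some triangle $t_m = \{m,a,b\} \in T$, and we claim $\{a,b\} \neq \{x,y\}$: otherwise $ab = xy = e$ would be in both $E(C)$ (as an edge of the triangle $t_m$) and $E(W)$, a contradiction. Hence at least one of $a, b$ differs from both $x$ and $y$; say $a \notin \{x,y\}$. Then $mx, my, ma$ are three distinct edges of $C$ at $m$, giving $d_C(m) \geq 3$.

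For part~\ref{TFMIsLarge}, I will construct an injection $\phi : U \to M$. Fix $t = \{u,v,w\} \in U$ and consider its \emph{closing round}: the round in which the last edge of $t$ that Client claims was offered. Without loss of generality this edge is $uv$, so that $uw, vw \in E(C)$ already and $uv \in X_w$ at the moment of offering. Since $uv \in X$, we are not in Case 1. In Case 2 with the other offered edge $e' \in X_{w'}$, a short case check on the labeling shows that the only scenario consistent with Client picking $uv$ and with $w \notin M$ (which holds since $t \in U$) is Case 2(a) in which $uv$ plays the role of $x_2 y_2$, forcing $w' = z_1 \notin M$; the strategy then adds $w'$ to $M$. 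Define $\phi(t) := w'$.

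It remains to check that $\phi$ is injective. Two distinct triangles in $U$ have distinct closing rounds, because in a single round only one of the two offered edges is claimed by Client, and only that edge can complete a triangle of the factor $T \subset C$ (the other goes to $W$). In addition, a vertex is added to $M$ at most once, because the guard ``$z_i \notin M$'' in Case 2(a) and~2(b) prevents re-addition. Therefore distinct closing rounds produce distinct images, $\phi$ is injective, and $|U| \leq |M|$. The main obstacle is simply the bookkeeping of which labeling in Case 2 is forced by Client's actual choice and by the constraint $w \notin M$; once this is handled the conclusion is immediate.
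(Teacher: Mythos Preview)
Your proof is correct and follows the same approach as the paper: for (a), both trace back to the marking round of $m$ and note that the rejected edge $xy$ forces a third $C$-neighbour of $m$ in its factor triangle; for (b), both observe that the closing round of any $t\in U$ falls in Case~2 with a new vertex added to $M$, and you simply spell out the injection and its injectivity that the paper leaves implicit. One minor over-refinement: you claim only Case~2(a) is consistent (implicitly identifying the strategy's witnesses $z_1,z_2$ with your $w',w$), whereas the paper more cautiously says Case~2(a) or~2(b) --- either way a vertex enters $M$, so your conclusion is unaffected.
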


\begin{proof}
To show \ref{TFMHighDegree}, fix a vertex $m \in M$ and consider the turn in which Client added $m$ to $M$. 
Then, according to Case 2 of the given strategy,	
at that point of time Client was offered an edge $xy$ with $xm,my \in E(C)$, but did not add this edge to his graph, i.e. the triple $\{m,x,y\}$ does not form a triangle in $C$. However, since $m$ needs to be in a triangle, $d_{C}(m) \ge 3$ follows.
	For \ref{TFMIsLarge}, fix a triangle $t \in U$ and consider the turn in which Client completed this triangle. Then Client must have played according to Case 2(a) or Case 2(b), i.e. a case in which Client adds a vertex to $M$. 
	Hence, $|M| \ge |U|$ follows immediately.	
\end{proof}

\medskip

Suppose that a triangle factor $T=\{t_1,\ldots,t_{\frac{n}{3}}\}$ is created in Client's graph.
In order to conclude that at least $\frac{13}{12}n$ rounds have been played, we consider two cases.
Assume first that right at this moment $|U| \ge \frac{n}{6}$.
Then 
\begin{align*}
2|E(\mathcal{C})| & = \sum_{v \in V} d_{C}(v)
		= \sum_{v \in M} d_{C}(v)+ \sum_{v \in V\setminus M} d_{C}(v)\\
		& \ge 3\cdot|M| + 2\cdot|V\setminus M|
		= |M| + 2\cdot|V|
		\ge \frac{n}{6}+2n
		= \frac{13n}{6}
\end{align*}
where the first inequality follows from Observation~\ref{TFLemma}\ref{TFMHighDegree} and since every vertex belongs to a triangle, and the second inequality
follows from Observation~\ref{TFLemma}\ref{TFMIsLarge}.
Assume $|U| < \frac{n}{6}$ next, then
\begin{align*}
2|E(\mathcal{C})| & = \sum_{v \in V} d_{C}(v)
		= \sum_{t \in T} \sum_{v\in t} d_{C}(v)
		 \ge  6\cdot|U| + 7\cdot(\frac{n}{3}-|U|)
		 > \frac{7n}{3}-\frac{n}{6}
		 = \frac{13n}{6}
		\end{align*}
where the first inequality follows from Observation~\ref{TFLemma}\ref{TFMHighDegree}. In any case, we obtain
$|E(C)| \geq \frac{13}{12}n$.\\

For the upper bound on $\tau_{WC}(\mathcal{F}_{n,K_3-fac},1)$ we may provide a {\bf Waiter's strategy}. 
For this, let $n_0$ be an even integer such that Waiter has a strategy to force a copy of $K_{48}$ in the unbiased Waiter-Client game on $K_{n_0}$. Such an integer exists by \cite{Beckbook}. Now, playing on $K_n$,
fix any set of vertices $W\subset V(K_n)$ with $|W|=n_0$.
In the following we describe Waiter's strategy.
If at any point during the game, Waiter is unable to follow the strategy, she forfeits the game. (We will later see that this does not happen.) Waiter's strategy consists of the following three stages: 

\medskip
	
	\textbf{Stage I:} Playing on $K_n[W]$ only, Waiter 
	forces Client to create a clique of size $48$.
	
	\medskip
	
	\textbf{Stage II:} When Waiter enters Stage~II,
	there exists a set $K\subset W$ of size $48$
	such that $C[K] \cong K_{48}$.
	Let $S = W\setminus K$ and $T = V \setminus W$. 
	Waiter will force Client to create a large family
	of vertex disjoint triangles and she will update 
	$S$ and $T$ by always removing the vertices
	of these triangles. As long as $|T|\geq 12$,
	she plays in sequences of at most $7$ moves as follows:
	
	Waiter first arbitrarily chooses $12$ vertices in 
	$S\cup T$, where she chooses exactly $2$ vertices
	from $S$ if $S\neq \varnothing$. Among these 12 vertices	
	she fixes two vertices $u,v$, with
	$u,v\in S$ if $S\neq \varnothing$, 
	and then she plays according to the strategy
	from Lemma~\ref{TFTwoTriangles} 
	on these 12 vertices.  By this, two triangles
	are forced in Client's graph. Waiter then
	removes the vertices
	which belong to these triangles from $S$ and $T$,
	respectively.

	\medskip

	\textbf{Stage III:} When Waiter enters Stage~III,
	we have $|T|<12$. Then for each of the vertices $v\in T$ 
	Waiter picks pairwise disjoint sets $K_v$ 
	of four vertices in $K$ 
	and offers the edges from $E_{K_n}(v,K_v)$ in pairs.

\medskip

It is clear that, if Waiter can follow the proposed strategy without forfeiting the game, 
she forces a
triangle factor within at most
$
\binom{n_0}{2} + \frac{7}{6}n + 2\cdot 12 = \frac{7}{6}n + O(1)
$
rounds. Therefore, it remains to verify that Waiter indeed can follow the proposed strategy.\\

{\bf Strategy discussion:}
The strategy in Stage~I can be followed by the choice of $n_0$.
Note that when Stage~I is over, no edges from 
$E_{K_n}(T, S \cup T)$ have been offered yet. 

For Stage~II assume that, before Waiter plays a sequence of moves
as described in the strategy, it is still true that
all edges in $E_{K_n}(T, S \cup T)$ are free. It then follows
from Lemma~\ref{TFTwoTriangles} that Waiter can play her next moves
according to the strategy of Stage~II. Just note that 
in the case when $S\neq \varnothing$ it may happen that $uv\in E_{K_n}(S)$ has already been offered before; but this does not cause any problem, since
for Lemma~\ref{TFTwoTriangles} Waiter does not need to offer $uv$ at all.
Finally also note that, when the two triangles are created,
by (2) from Lemma~\ref{TFTwoTriangles}
and by the update in Stage~II it follows that $E_{K_n}(T, S \cup T)$ again consists only of free edges.
Hence, Waiter can repeatedly apply Lemma~\ref{TFTwoTriangles}
and follow the strategy for Stage~II.

Afterwards, when Waiter enters Stage~III, it holds that $S=\varnothing$ and $|T|<12$. Since $E_{K_n}(K,T)$
consists solely of free edges at this point, Waiter can offer edges as
desired.  \hfill $\Box$

\section{Biased games}

In the proof of Theorem~\ref{thm:pm_biased} and Theorem~\ref{thm:ham_biased} we will make use of the following result due to 
Bednarska-Bzd\c{e}ga, Hefetz, Krivelevich and \L uczak~\cite{BHKL2016}.

\begin{thm}[Theorem 1.4(ii) in~\cite{BHKL2016}]\label{thm:pancyclic}
There exists a positive constant $c\in (0,1)$ and an integer $n_0$ such that the following holds. If $n\geq n_0$ 
and $b \leq cn$, then playing a $b$-biased Waiter-Client 
game on $E(K_n)$, Waiter has a strategy to force
a spanning pancyclic graph.
\end{thm}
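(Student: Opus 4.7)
The plan is to apply the probabilistic-intuition framework for biased Waiter-Client games, as developed by Bednarska-Bzd\c{e}ga in~\cite{BB2013} (and extended in~\cite{BHKL2016}). The framework provides Erd\H{o}s-Selfridge-type criteria under which Waiter can force Client to claim at least one member of a prescribed hypergraph family $\mathcal{F} \subseteq 2^{E(K_n)}$. Here the target is to force Client to own, simultaneously for every $\ell \in \{3, \ldots, n\}$, at least one member of the family $\mathcal{F}_\ell$ of $\ell$-cycles in $K_n$.

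For a single cycle length $\ell$, the family $\mathcal{F}_\ell$ has cardinality $\binom{n}{\ell}(\ell-1)!/2 \approx n^\ell/(2\ell)$ while each member has exactly $\ell$ edges. A standard Erd\H{o}s-Selfridge potential of the form $\Phi_\ell := \sum_{F \in \mathcal{F}_\ell} (1+b)^{-|F|}$ is at least $\tfrac{1}{2\ell}(n/(1+b))^\ell \geq (2c)^{-\ell}/(2\ell)$, which exceeds the winning threshold~$1$ for every $\ell \geq 3$ provided $c$ is sufficiently small. Thus, for each fixed $\ell$, the framework gives a Waiter strategy that forces Client to claim a cycle of that length. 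To handle all $\ell$ in parallel, I would consider the joint potential $\Phi = \sum_{\ell=3}^{n} \Phi_\ell$ and play a single strategy that, in each round, offers a pair of free edges minimising the worst-case increase of~$\Phi$; a uniform bound on the per-round change of each~$\Phi_\ell$ then guarantees that all $\Phi_\ell$ remain above threshold until each has fired, i.e.\ until Client has claimed a full cycle of every length.

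The main obstacle is the case of very short cycles, especially triangles. The family $\mathcal{F}_3$ has size only $\binom{n}{3}$, while Waiter picks up $b = cn$ edges per round, so a few unfortunate rounds suffice to destroy a constant fraction of all triangles. To mitigate this I would insert a short preparatory phase at the start of the game in which Waiter aggressively offers pairs of cherries $\{uv, vw\}$ with $uw$ still free, forcing Client to close a triangle within $O(1/c)$ rounds; only after this phase does she switch to the joint-potential argument for $\ell \geq 4$. The constant~$c$ must then be chosen small enough that (i) the triangle-completion phase succeeds, and (ii) for every $\ell \geq 4$ the potential $\Phi_\ell$ is still above the winning threshold after the preparatory phase has been played. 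This balancing is where the bias bound $b \leq cn$ ultimately comes from.
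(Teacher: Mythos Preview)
First, note that this theorem is not proved in the present paper: it is quoted from~\cite{BHKL2016} and used only as a black box in Section~\ref{sec:biased}. So there is no in-paper argument to compare against.

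As for your outline, it has a genuine gap. The step ``$\Phi_\ell>1$ implies Waiter wins on $\mathcal{F}_\ell$'' is not a theorem. The weight-function results in~\cite{BB2013} and in~\cite{Beckbook} go in the other direction: they hand \emph{Client} an avoiding strategy when a suitable potential is \emph{small} (and, dually, hand Waiter a blocking strategy in the Client--Waiter variant). There is no general converse giving Waiter a forcing strategy once a first-moment count exceeds~$1$. Worse, for the cycle lengths that matter most the underlying heuristic is simply false. With $b=cn$ Client receives roughly a $1/(b{+}1)\approx 1/(cn)$ fraction of all edges, so his graph resembles $G(n,C/n)$ with $C=1/c$; for any constant $C$ the expected number of Hamilton cycles in $G(n,C/n)$ is enormous (this is exactly your $\Phi_n$), yet $G(n,C/n)$ is a.a.s.\ \emph{not} Hamiltonian, since Hamiltonicity requires $p\gtrsim (\ln n)/n$. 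Hence no first-moment or potential argument can produce the $n$-cycle, and without it there is no pancyclicity. Your preparatory phase for triangles attacks the wrong end of the length spectrum; the real obstacle is at $\ell$ close to $n$, and your ``joint potential'' sketch does nothing to overcome it.

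The proof in~\cite{BHKL2016} is accordingly not a potential computation but an explicit multi-stage construction: Waiter first forces a Hamilton cycle by a direct path-building strategy (this is where the linear bound $b\le cn$ enters), and then forces suitable chords so that cycles of every shorter length appear. If you wish to reconstruct a proof, that is the shape it has to take; the same template is visible, in the unbiased setting, in Section~\ref{sec:pancyclic} of the present paper.
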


We start with the proof of Theorem~\ref{thm:ham_biased}.

\begin{proof}[Proof of Theorem~\ref{thm:ham_biased}]
Let $c$ and $n_0$ be given according to Theorem~\ref{thm:pancyclic}.
We set 
\begin{equation}\label{eq:biased_constants}
C_0 = 100\max\{c^{-1},n_0\}, ~~~
\delta_0 = 0.1c,~~~ 
\delta = 0.01\min\{\delta_0, C_0^{-1}\},~~~
C=C_0\delta^{-1}.
\end{equation}

We let $b\leq \delta n$ from now on and, whenever necessary, we will assume $n$ to be large enough.

In the following we will describe a strategy for Waiter in the $b$-biased Waiter-Client game on $K_n$, and afterwards we will show that it is a strategy with which Waiter forces a Hamilton cycle within at most $n+Cb$ rounds. Whenever Waiter is not able to follow the proposed strategy, she forfeits the game. (We will show later that this does not happen.)
The strategy is split into five stages.

\medskip

{\bf Stage I:} Within $n-C_0b-1$ rounds, Waiter forces a path $P=(a_1,\ldots,a_{n-C_0b})$ on $n-C_0b$ vertices according to the following rule:

Initially set $P=\{a_1\}$ for an arbitrary vertex $a_1\in V(K_n)$. Assume after $i-1$ rounds Waiter has already forced 
a path $P=(a_1,\ldots,a_i)$ on $i$ vertices. Then in round $i$ Waiter selects $b+1$ vertices $x_1,\ldots,x_{b+1}$ from $V\setminus V(P)$ which have the smallest degree in her graph. Then she offers the edges 
\{$a_{i}x_j:j \in [b+1] \}$ to Client, of which he needs to pick one. 
Waiter updates the path such that $P=(a_1,\ldots,a_{i+1})$ with $a_{i+1}:=x_j$.

Once $P$ has reached length $n-C_0b-1$, Waiter proceeds with Stage~II.

\medskip

{\bf Stage II:} Let $P$ denote the path that Client claims at the end of Stage~I. Let $R=V\setminus V(P)$ then.
Playing only on $K_n[R]$, within at most $Cb$ rounds, Waiter forces a Hamilton cycle of $K_n[R]$. The details of how she can do this, can be found later in the strategy discussion. Afterwards, Waiter proceeds with Stage~III.

\medskip

{\bf Stage III:} This stage lasts 1 round. Let $P=(a_1,\ldots,a_{n-C_0b})$ be the path from Stage I and let $H$ be the Hamilton cycle from Stage~II. 
Waiter now picks $b+1$ vertices $x_1,\ldots,x_{b+1}\in V(H)$ such that $a_1x_j$ is free for every $j\in [b+1]$. Then she offers all these $b+1$ edges.

Afterwards, Client needs to pick one of the edges $a_{1}x_j$.
From then on, set $\tilde{x}=x_j$ and let 
$x$ be one of the neighbours of $x_j$ on $H$. 
Next, Waiter proceeds with Stage~IV.

\medskip

{\bf Stage IV:} This stage lasts exactly $b$ rounds in which Waiter forces a few Hamilton paths on $V(K_n)\setminus R$
using P\'osa rotations~\cite{Posa1976}.

More precisely, let $P_0=P=(a_1,\ldots,a_{n-C_0b})$ be the path from Stage~I. Set $v_0=a_{n-C_0b}$. By playing on $K_n[V(P_0)]$ only, for $i\in [b]$ Waiter will ensure that immediately after the $i^{\text{th}}$ round in Stage~IV
Client's graph contains a path $P_i$ such that the following properties hold:

\medskip

\begin{minipage}{0.95\textwidth}
\begin{enumerate}[label=\itmarab{P}]
\item\label{paths:vertices} $V(P_i)=V(P_0)$,
\item\label{paths:endpoints} $P_i$ has endpoints $a_1$ and $v_i\in V\setminus \{v_0,\ldots,v_{i-1}\}$,
\item\label{paths:threat} $v_ix$ is free.
\end{enumerate}
\end{minipage}

\medskip

The details of how Waiter can do this can be found later in the strategy discussion. Afterwards, Waiter proceeds with Stage~V.

\medskip

{\bf Stage V:} Within one round, Waiter forces Client's graph to contain a Hamilton cycle of $K_n$.
The details of how she can do this can be found later in the strategy discussion.

\medskip

If Waiter can follow the proposed strategy without forfeiting the game, then it is obvious that she forces a Hamilton cycle within at most
$
(n-C_0b-1) + Cb + 1 + b + 1 < n + Cb
$
rounds.
Therefore, it remains to prove that Waiter indeed can always
follow the proposed strategy. \\

{\bf Strategy discussion:}

{\bf Stage I:} Consider the round $i\in [n-C_0b-1]$ in Stage~I. By then 
Waiter already forced a path $P=(a_1,\ldots,a_i)$.
Since in the previous rounds Waiter only offered edges which are incident to at least one of the vertices $a_j$ with $j<i$, we know that before Waiter's $i^{\text{th}}$ turn
all the edges between $a_i$ and $V\setminus V(P)$ are free. Moreover, since $|V\setminus V(P)|\geq C_0b$,
Waiter can easily find and offer $b+1$ edges as required by the strategy.

\medskip

{\bf Stage II:} Let $P=(a_1,\ldots,a_{n-C_0b})$ denote the path which Waiter has forced at the end of Stage~I. Since so far she only offered edges which are incident to at least one of the vertices $a_j$ with $j<n-C_0b$, we know that at the beginning of Stage~II all the edges inside $R=V\setminus V(P)$ are still free. Let $\tilde{n}:=|R|=C_0b$ and, using (\ref{eq:biased_constants}), observe that
$b=C_0^{-1}\tilde{n}<c\tilde{n}$ and $\tilde{n}>n_0$. 

According to Theorem~\ref{thm:pancyclic}, Waiter 
has a strategy for playing a $(\tilde{b} : 1)$ game on $K_n[R]$ with bias $\tilde{b}=c\tilde{n}$
in such a way that Client is forced to get a pancyclic graph.
Thus, following that strategy with bias $b<\tilde{b}$
(by pretending to add $\tilde{b}-b$ extra edges to Waiter's graph in each round), Waiter can force a Hamilton cycle
on $R$ within at most
$$
\left\lceil \frac{\binom{|R|}{2}}{\tilde{b}+1} \right\rceil
<
\frac{|R|^2}{\tilde{b}} = 
\frac{\tilde{n}\cdot C_0b}{c \tilde{n}} = 
C_0c^{-1}b \stackrel{(\tref{eq:biased_constants})}{<} C b
$$ 
rounds, as promised in the strategy description.

\medskip

{\bf Stage~III:} When Waiter enters Stage~III, it holds that
$d_{C\cup W}(a_1)=b+1$, since only in the very first rounds
she offered edges incident to $a_1$.
The number of available edges between $a_1$ and $R$ is at least 
$
|R|-d_{C\cup W}(a_1) = C_0b - (b+1) > b+1$
by the choice of $C_0$. 
Hence, Waiter can offer edges as required for this stage of the proposed strategy.

\medskip

{\bf Stage~IV:} Before we will show that Waiter can follow Stage~IV, let us first observe that none of the vertices has a too large degree by now.

\begin{obs}\label{obs:degrees}
At the beginning of Stage~IV it holds that
$d_{C\cup W}(v,V(P_0))<\delta_0 n$ for every vertex $v\in V(K_n)$.
\end{obs}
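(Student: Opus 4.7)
The plan is to reduce the claim to a degree bound after Stage~I, and then to exploit Waiter's greedy balancing rule in Stage~I via a simple averaging argument.

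First, I would observe that Stages~II and~III contribute essentially nothing to $d_{C\cup W}(v,V(P_0))$. Indeed, Stage~II is played entirely on $K_n[R]$, so no edge with an endpoint in $V(P_0)$ is claimed during it; and in Stage~III Waiter only offers edges from $a_1\in V(P_0)$ to vertices in $R$, so $d_{C\cup W}(v,V(P_0))$ is unchanged for $v\in V(P_0)$ and increases by at most $1$ for $v\in R$. It therefore suffices to obtain a bound of the right order on $d_{C\cup W}(v)$ at the end of Stage~I.

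Next, I would exploit Waiter's greedy rule. After $k-1$ rounds of Stage~I, Waiter has exactly $(k-1)b$ edges, so $\sum_{u\in V} d_W(u) = 2(k-1)b$. At the start of round $k$, list the vertices of $V\setminus V(P_k)$ in non-decreasing order of Waiter-degree as $v_1,\ldots,v_{n-k}$, so that Waiter selects $v_1,\ldots,v_{b+1}$. Throughout Stage~I we have $n-k-b \geq (C_0-1)b+1$, so averaging gives
\[
(n-k-b)\, d_W(v_{b+1}) \;\leq\; \sum_{i=b+1}^{n-k} d_W(v_i) \;\leq\; 2(k-1)b \;\leq\; 2nb,
\]
and hence $d_W(v_{b+1}) \leq 2n/(C_0-1)$. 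Consequently, whenever Waiter picks a vertex $v$ in Stage~I, $v$ has Waiter-degree at most $2n/(C_0-1)$ at that moment, so after the round $v$ still satisfies $d_W(v) \leq 2n/(C_0-1) + 1$.

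Finally, the only way a vertex $v$ can gain more than one Waiter-edge in a single Stage~I round is by being the current path-endpoint $a_i$, in which case it collects at most $b$ further Waiter-edges during round~$i$. Combined with the trivial bound $d_C(v)\leq 2$ and the $+1$ from Stage~III for vertices in $R$, this gives
\[
d_{C\cup W}(v,V(P_0)) \;\leq\; \tfrac{2n}{C_0-1} + b + 3
\]
at the start of Stage~IV. Plugging in $C_0=100\max\{c^{-1},n_0\}$, $b\leq \delta n$, and $\delta = 0.01\min\{\delta_0,C_0^{-1}\}$, the right-hand side is at most $\tfrac{cn}{25} + 0.001\,cn + 3 < 0.1\,cn = \delta_0 n$ for $n$ large. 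The main obstacle is making the averaging bound on $d_W(v_{b+1})$ tight enough; this is exactly why Stage~I is cut off $C_0 b$ vertices short of spanning~$V$, so that $|V\setminus V(P_k)|$ stays comfortably larger than $b$ throughout.
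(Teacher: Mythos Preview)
Your proof is correct and follows the same overall plan as the paper: reduce to bounding Waiter-degrees at the end of Stage~I, then trace the small additional contributions from Stages~II and~III. The one substantive difference is in how the Stage~I bound is extracted from Waiter's greedy rule. The paper observes (by a short induction) that the Waiter-degrees among the vertices of $V\setminus V(P)$ differ by at most~$1$ throughout Stage~I, so the maximum such degree is within~$1$ of the average $2e(W)/|V\setminus V(P)|\le 2n(b+1)/(C_0 b)<0.5\,\delta_0 n$. You instead bound only the $(b{+}1)$-st smallest degree via averaging over the top $n-k-b$ degrees, which suffices since only the selected vertices matter for the subsequent tracking. Both arguments give bounds of the same order; the paper's structural fact is a bit cleaner, while your averaging is more direct and avoids the induction. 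One presentational point: your sentence ``after the round $v$ still satisfies $d_W(v)\le 2n/(C_0-1)+1$'' implicitly uses that a vertex not selected and not the current endpoint gains no Waiter-edges in that round, so the bound persists until $v$ is either added to $P$ (and then gains at most $b$ as the next endpoint) or Stage~I ends --- this is correct, but worth stating explicitly.
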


\begin{proof}
When Waiter forces the path $P$ in Stage~I,
she always prefers to offer edges from the current endpoint $a_i$ to the vertices of smallest Waiter-degree in $V\setminus V(P)$. This way she makes sure that the Waiter-degrees among the vertices in $V\setminus V(P)$ differ by at most 1
throughout Stage~I. Now, Stage~I lasts $n-Cb-1$ rounds, and thus 
$e(W)\leq n(b+1)$
holds throughout Stage~I. In particular, all vertices
$v\in V\setminus V(P)$ then satisfy
$$
d_{C\cup W}(v)< \left\lceil \frac{2e(W)}{|V\setminus V(P)|} \right\rceil < \frac{2n(b+1)}{C_0b} \stackrel{(\tref{eq:biased_constants})}{<} 0.5\delta_0 n~ .
$$

Now consider the beginning of Stage~IV. 
It holds that $d_{C\cup W}(a_1)=2(b+2)<3\delta n<0.1\delta _0 n$, since there were only two rounds in which Waiter offered edges at $a_1$.
For every vertex $v\in V(P)\setminus \{a_1\}$ we then have
$d_{C\cup W}(v) < 0.5\delta_0 n + (b+1) < \delta_0 n$,
since after $v$ was added to $P$ there was only
one round in which Waiter offered edges incident to $v$.
Moreover, for every remaining vertex $v$ (i.e. $v\in R$),
we have $d_{C\cup W}(v,V(P_0))<0.5\delta_0 n + 1 < \delta_0 n$ 
since these vertices belong to $V\setminus V(P)$ at the end of Stage~I and since afterwards, until the end of Stage~III, the edge $va_1$ may be the only edge between $v$ and $V(P)$ that got offered. This proves the observation.
\end{proof}

\medskip

Having the observation in hands, we now show how Waiter can force the desired paths in Stage~IV. We will do it in such a way that the Properties~\ref{paths:vertices}--\ref{paths:threat} hold as well as the following property:

\medskip

\begin{minipage}{0.95\textwidth}
\begin{enumerate}[label=\itmarab{Q}]
\item\label{paths:degree1} $d_{C\cup W}(v,V(P_i))<\delta_0 n + i$ for every $v\in V(P_i)\setminus \{v_0,\ldots,v_{i-1}\}$.
\end{enumerate}
\end{minipage}

\medskip

We proceed by induction on $i$. For $i=0$ the path $P_0=P$ from
Stage~I trivially satisfies \ref{paths:vertices} and \ref{paths:endpoints}. Property~\ref{paths:degree1} follows by Observation~\ref{obs:degrees}. Moreover, Property~\ref{paths:threat}
holds by the following reason: In Stage~I every offered edge is incident to at least one of the vertices in $V(P_0)\setminus \{v_0\}$, in Stage~II every edge is disjoint from $V(P_0)$ and in Stage~III every edge is incident to $a_1\notin \{v_0,x\}$.
Thus, $v_0x$ has not been offered yet.

So, let $i>0$ then. Let $P_{i-1}$ be the path given by induction and consider the $i^{\text{th}}$ move in Stage~IV.
For every vertex 
$v\in V(P_{i-1})\setminus \{v_{i-1}\}$ denote with 
$v^+$ the unique neighbour of $v$ in $P_{i-1}$ with
$\dist_{P_{i-1}}(v^{+},v_{i-1})=\dist_{P_{i-1}}(v,v_{i-1})-1$.
Set
\begin{align*}
B_1 & :=
	\left\{
	y\in V(P_{i-1}):~
	y^{+}x\in C\cup W
	\right\} ~ , \\
B_2 & :=
	\left\{
	y\in V(P_{i-1}):~
	y^{+}=v_j~ \text{for some }j<i
	\right\} ~ , \\
B_3 & :=
	\left\{
	y\in V(P_{i-1}):~
	yv_{i-1}\in C\cup W
	\right\} ~ .
\end{align*}
By Observation~\ref{obs:degrees} and since
in Stage~IV Waiter only offers edges in $V(P_0)$,
we obtain $|B_1|<\delta_0 n$. Since $i\leq b$, we have
$|B_2|\leq i\leq b < \delta_0 n$. Using Property~\ref{paths:degree1} we get 
$|B_3|<\delta_0 n + i \leq 2\delta_0 n$.
Hence, also using (\ref{eq:biased_constants}), we conclude
\begin{align*}
|V(P_0)\setminus (B_1\cup B_2\cup B_3)|
& \geq
n- C_0b - 4\delta_0 n \geq
n- C_0\delta n - 4\delta_0 n 
\geq n - 0.01n - 0.4 n \\
& \geq b+1
\end{align*}
provided $n$ is large enough. Waiter's strategy now is to offer
$b+1$ edges of the form $yv_{i-1}$ with 
$y\in V(P_0)\setminus (B_1\cup B_2\cup B_3)$, which is possible since $y\notin B_3$ implies that $yv_{i-1}$ is free.
Client then needs to claim one of these edges; by abuse of notation denote this edge with 
$yv_{i-1}$.
Let $v_i=y^+$ then, and let $P_i$ be the path
induced by $(E(P_{i-1)})\cup \{yv_{i-1}\}) \setminus \{yy^+\}$.
Property~\ref{paths:vertices} is trivially satisfied.
Moreover, $P_i$ has endpoints $a_1$ and $y^+=v_i$ and, since $y\notin B_2$, Property~\ref{paths:endpoints} holds as well. 
Property~\ref{paths:threat} is guaranteed as $y\notin B_3$.
For Property~\ref{paths:degree1} observe the following: 
In her $i^{\text{th}}$ move of Stage~IV, 
Waiter only offers edges incident to $v_{i-1}$.
Thus, the degrees 
$d_{C\cup W}(v,V(P_{i-1}))
	=d_{C\cup W}(v,V(P_{i}))$ can increase at most by $1$
	for every $v\neq v_{i-1}$.

\medskip

{\bf Stage V:} Let $v_0,\ldots,v_b$ be the distinct endpoints
of the paths $P_0,\ldots,P_b$ from Stage~IV. Each of the $b+1$ edges $v_ix$ with $0\leq i\leq b$ is free, and hence Waiter can offer those in her next move. Let $v_jx$ be the edge
claimed by Client afterwards, and let $f=a_1\tilde{x}$ be the edge which Client claimed in Stage~III. Then 
$$(E(P_j)\cup E(H) \cup \{f,v_jx\})\setminus \{x\tilde{x}\}$$ is a Hamilton cycle of $K_n$ which is fully claimed by Client.
\end{proof}

Finally, we proceed with the proof of Theorem~\ref{thm:pm_biased}. 

\begin{proof}[Proof of Theorem~\ref{thm:pm_biased}]
Creating a perfect matching, under assumption of Theorem~\ref{thm:pancyclic}, is rather straightforward. Let $\delta_0,\delta,C_0$ and $C$
be defined as in the previous proof. 
For roughly $0.5(n-C_0b)$ rounds, Waiter can force a large matching, by playing according to Stage~I from the previous proof, where a long path was created, and just faking every second move (i.e. pretending to make the move, but not playing at all). Then, as in Stage~II of the previous proof,
she forces a Hamilton cycle on the remaining vertices within $Cb$ rounds, thus a perfect matching is created. 
\end{proof}\label{sec:biased}

\section{Concluding remarks}\label{sec:concluding}
{\bf Winning as fast as possible.}
As already mentioned in the introduction, there are quite a few games which Waiter win (almost) perfectly fast. Also, for almost every game that we considered in our paper, we were able to prove that Waiter can win it at least asymptotically fast. On the other hand, for the triangle factor game we know that it is not won asymptotically fast.
We believe that the upper bound in Theorem~\ref{thm:triangle_factor} is asymptotically tight
and hence pose the following conjecture.

\begin{conj}
It holds that $\tau_{WC}(\mathcal{F}_{n,K_3-fac},1)=\frac{7}{6}n+o(n)$.
\end{conj}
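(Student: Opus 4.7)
The plan is to improve Client's strategy so as to match the upper bound in Theorem~\ref{thm:triangle_factor}. Examining the proof of the current $\frac{13}{12}n$ lower bound there, the only slack appears in the regime where the set $U$ of triangles of Client's final factor having no marked vertex has size $\Omega(n)$. Indeed, the double-counting used there in fact gives
\[
2|E(C)| \;\geq\; 6|U| + 7(|T|-|U|) \;=\; \tfrac{7n}{3}-|U|,
\]
so the conjecture would follow from any Client strategy forcing $|U|=o(n)$, i.e.\ a strategy ensuring that almost every triangle in Client's final graph contains a vertex of $C$-degree at least three.

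The first step I would try is to strengthen the marking rule so that each closed triangle in $C$ certifies a high-degree vertex \emph{within itself}, rather than on the refused cherry centre (which typically lies elsewhere). Concretely, when Waiter offers a pair of cherry-closing edges $x_1y_1\in X_{z_1}$ and $x_2y_2\in X_{z_2}$, Client should choose the edge $x_iy_i$ whose newly formed triangle $\{z_i,x_i,y_i\}$ already contains a vertex of $C$-degree at least three, whenever such a choice is available. When no such choice exists, the refused cherry centre is still marked as before, but Client additionally pairs the resulting new triangle with a \emph{witness} vertex outside the triangle whose future incidence to an extra edge is forced by the refused-cherry structure. A companion Case~1 rule would ask Client to pick the offered non-cherry edge so as to minimise the creation of new almost-triangles available to Waiter.

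The main obstacle, and where I expect the real work, is controlling the build-up-then-harvest strategy available to Waiter: she may use many Case~1 rounds to construct a graph with many vertex-disjoint cherries, and then trigger a rapid sequence of Case~2 rounds that closes several pristine triangles in quick succession. To control this, one naturally introduces a potential function $\Phi$ tracking, say, the number of cherries in $C$ both of whose peripheral vertices have degree exactly $2$; the goal is to show that $\Phi$ grows by $O(1)$ per round under Client's refined strategy, while bounding the total number of pristine triangles that can eventually be formed by $\Phi$ at the moment of formation. Proving such a bound seems to require handling Client's two possible responses to each round in a unified way, and is, in my view, the technical heart of the argument; once $|U|=o(n)$ is established, the conjecture follows from the double count above together with the matching upper bound of Theorem~\ref{thm:triangle_factor}.
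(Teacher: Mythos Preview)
This statement is a \emph{conjecture} in the paper, posed in the concluding remarks; the paper does not supply a proof of it, so there is nothing to compare your attempt against. What you have written is therefore not a competing proof but a research plan for an open problem.

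That said, your reduction is sound: the double count
\[
2|E(C)| \;\ge\; 6|U| + 7\bigl(\tfrac{n}{3}-|U|\bigr) \;=\; \tfrac{7n}{3}-|U|
\]
is valid once $U$ is taken to be the set of triangles whose three vertices all have $C$-degree exactly~$2$, and so any Client strategy guaranteeing $|U|=o(n)$ would indeed settle the conjecture in combination with the upper bound of Theorem~\ref{thm:triangle_factor}. You are also right that the slack in the paper's $\tfrac{13}{12}n$ argument lies entirely in this term.

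However, the substantive step --- designing a Client strategy that provably achieves $|U|=o(n)$ --- remains entirely unproven in your write-up, and you acknowledge as much. The refinements you suggest (prefer closing a triangle that already contains a high-degree vertex; pair a forced pristine triangle with an external witness; track a cherry-based potential $\Phi$) are natural first moves, but none of them comes with an argument, and the ``build-up-then-harvest'' obstruction you yourself flag is genuine: Waiter can spend $\Theta(n)$ rounds assembling many vertex-disjoint cherries whose centres and leaves all have $C$-degree at most~$2$, and then force triangle closures in rapid succession. Your potential~$\Phi$ may well grow by a constant per round, but that only bounds $\Phi$ by $O(n)$, which does not by itself prevent $\Theta(n)$ pristine triangles from being harvested. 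Turning this sketch into a proof would require a quantitative amortisation showing that each pristine triangle Waiter eventually forces is paid for by $\Omega(1)$ extra Client edges elsewhere, and nothing in your outline establishes that. As it stands, this is a plausible line of attack on an open problem, not a proof.
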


Another game to consider is the \emph{minimum degree $k$ game} $\mathcal{D}_{n,k}$ played on $K_n$, 
in which the winning sets consist of all spanning subgraphs $H$ with $\delta(H)\geq k$.
In this paper we considered the unbiased and biased version of the perfect matching and Hamiltonicity game, which covers the minimum degree $1$ and minimum degree $2$ games.
We also have an argument that would show that
Waiter can win the unbiased game with winning sets $\mathcal{D}_{n,k}$ within
$\frac{kn}{2}+O(1)$ rounds. We wonder whether this can be improved as follows:

\begin{problem}
For $k>2$, show that $\tau_{WC}(\mathcal{D}_{n,k},1)=\lfloor \frac{kn}{2} \rfloor + 1$. Moreover, determine
$\tau_{WC}(\mathcal{D}_{n,k},b)$ asymptotically when $b>1$.
\end{problem}
Note that Maker can win the unbiased Maker-Breaker version of the game $\mathcal{D}_{n,k}$ within at most $\lfloor \frac{kn}{2} \rfloor + 1$ rounds, as shown by Ferber and Hefetz in~\cite{FH14}, therefore we are curious to know whether this holds in the Waiter-Client setup as well.

Finally, one could look at the \emph{$k$-clique factor game} for $k\geq 3$. The triangle factor game considered in this paper covers the case $k=3$, but it is still unknown what happens for $k>3$. 

\begin{problem}
Find non-trivial upper and lower bounds for $\tau_{WC}(\mathcal{F}_{n,K_k-fac},1)$ when $k>3$.
\end{problem}

Lastly, all the games we considered in this paper are examples showing that Waiter can win (asymptotically) at least as fast as Maker. We wonder whether this is always the case.

\begin{question}
Does there exist some family $\mathcal{F}$ of winning sets such that $\tau_{MB}(\mathcal{F},1)<\tau_{WC}(\mathcal{F},1)$?
\end{question}

{\bf Further questions involving trees.}
In Section~\ref{sec:trees} we found a fast winning strategy for Waiter in the case where she wants to force a copy of a given spanning tree that fulfills some maximum degree condition. It would be interesting to see if it is possible to relax this condition when we do not intend to win fast. 

\begin{problem}
For trees $T$ on $n$ vertices,
how large can $\Delta(T)$ be such that
Waiter has a strategy in the game with winning sets $\mathcal{F}_T$ on $K_n$?
\end{problem}

Another question to think about would be the biased version of this game, when the bias depends on $n$.

\begin{problem}
Determine $\tau_{WC}(\mathcal{F}_T,b)$ asymptotically when $b>1$.
\end{problem}



\begin{thebibliography}{alpha}

\bibitem{ABKNP2017}
P.~Allen, J.~B\"ottcher, Y.~Kohayakawa, H.~Naves and Y.~Person,
{\em Making spanning graphs},
arXiv preprint, arXiv:1711.05311 (2017).


\bibitem{Beckbook} %
J.~Beck, 
{\bf Combinatorial Games: Tic-Tac-Toe Theory},
Encyclopedia of Mathematics and Its Applications 114, Cambridge University Press, 2008.

\bibitem{BB2013} %
M.~Bednarska-Bzd\c{e}ga, {\em On weight function methods in Chooser-Picker games}, Theoretical Computer Science 475 (2013), 21--33.

\bibitem{BHKL2016} %
M.~Bednarska-Bzd\c{e}ga, D.~Hefetz, M.~Krivelevich and T.~\L uczak,
{\em Manipulative waiters with probabilistic intuition},
Combinatorics, Probability and Computing 25(6) (2016), 823--849.

\bibitem{BL2000} %
M.~Bednarska and T.~\L uczak, 
{\em Biased positional games for which random strategies are nearly optimal}, 
Combinatorica 20 (2000), 477--488.

\bibitem{Bondy1971} %
J.A.~Bondy, 
{\em Pancyclic Graphs I}, 
Journal of Combinatorial Theory 11 (1971), 80--84.

\bibitem{CE1978} %
V.~Chv\'atal and P.~Erd\H os,
{\em Biased positional games},
Annals of Discrete Mathematics 2 (1978), 221--229.

\bibitem{CFGHL2015} %
D. Clemens, A. Ferber, R. Glebov, D. Hefetz and A. Liebenau,
{\em Building spanning trees quickly in Maker-Breaker games}, 
SIAM Journal on Discrete Mathematics 29(3) (2015), 1683--1705.

\bibitem{CM2018} %
D.~Clemens and M.~Mikala\v{c}ki,
{\em How fast can Maker win in fair biased games?}, 
Discrete Mathematics 341(1) (2018), 51--66.

\bibitem{CMP2009} %
A.~Csernenszky, C.I.~M\'andity and A.~Pluh\'ar, 
{\em On Chooser-Picker Positional Games}, 
Discrete Mathematics 309 (2009), 5141--5146. 

\bibitem{ES1973} %
P.~Erd\H{o}s and J.L.~Selfridge, 
{\em On a combinatorial game},
Journal of Combinatorial Theory Series A 14(3) (1973), 298--301.

\bibitem{FH14}%
A.~Ferber and D.~Hefetz,{\em Weak and strong $k$-connectivity games}, European Journal of Combinatorics 35 (2014), 169--183.
 
\bibitem{FHK2012} %
A. Ferber, D. Hefetz and M. Krivelevich,
{\em Fast embedding of spanning trees in biased Maker-Breaker games},
European Journal of Combinatorics 33 (2012), 1086--1099.

\bibitem{FKN2015} %
A.~Ferber, M.~Krivelevich and H.~Naves,
{\em Generating random graphs in biased Maker-Breaker games},
Random Structures \& Algorithms 47(4) (2015), 615--634.

\bibitem{GS2009} %
H.~Gebauer and T.~Szab\'o,
{\em Asymptotic random graph intuition for the biased connectivity game}, 
Random Structures and Algorithms 35 (2009), 431--443.

\bibitem{HJ1963} %
A.W.~Hales and R.I.~Jewett,
{\em Regularity and positional games},
Classic Papers in Combinatorics, Birkh\"auser Boston (2009), 320--327.

\bibitem{HKSS2009} %
D. Hefetz, M. Krivelevich, M. Stojakovi\'c and T. Szab\'o, 
{\em Fast winning strategies in Maker-Breaker games}, 
Journal of Combinatorial Theory Series B 99 (2009), 39--47.

\bibitem{HKSSbook}  %
D.~Hefetz, M.~Krivelevich, M.~Stojakovi\'{c} and T.~Szab\'o, 
{\bf Positional Games}, 
Birkh\"auser Basel (2014).

\bibitem{HMS2012} %
D.~Hefetz, M.~Mikala\v{c}ki and M.~Stojakovi\'c, 
{\em Doubly biased Maker-Breaker Connectivity game}, 
Electronic Journal of Combinatorics 19(1) (2012), P61.

\bibitem{HS2009} %
D. Hefetz and S. Stich, 
{\em On two problems regarding the Hamilton cycle game}, 
Electronic Journal of Combinatorics 16 (1) (2009), R28.

\bibitem{Knox2012} 
F.~Knox, 
{\em Two constructions relating to conjectures of Beck on positional games}, 
arXiv preprint, arXiv:1212.3345 (2012).

\bibitem{Kriv2010}
M.~Krivelevich, 
{\em Embedding spanning trees in random graphs}, 
SIAM Journal on Discrete Mathematics 24 (2010), 1495--1500.

\bibitem{Kriv2011} %
M.~Krivelevich,  
{\em The critical bias for the Hamiltonicity game is $(1+o(1))n/\ln n$},
Journal of the American Mathematical Society 24 (2011), 125--131.

\bibitem{Kriv2014} 
M.~Krivelevich, 
{\em Positional Games}, 
arXiv preprint, arXiv:1404.2731 (2014).

\bibitem{KT2018}
M.~Krivelevich and N.~Trumer,
{\em Waiter-Client maximum degree game},
arXiv preprint, arXiv:1807.11109 (2018).

\bibitem{Lehm1964} %
A.~Lehman,
{\em A solution of the Shannon switching game},
Journal of the Society for Industrial and Applied Mathematics 12 (1964), 687--725.

\bibitem{MS2016}
M.~Mikala\v{c}ki and M.~Stojakovi\'c, 
{\em Fast strategies in biased Maker-Breaker games}, 
Discrete Mathematics \& Theoretical Computer Science 20(6) (2018).

\bibitem{NSS2016} %
R. Nenadov, A. Steger and M. Stojakovi\'c,
{\em On the threshold for the Maker-Breaker H-game},
Random Structures \& Algorithms 49(3) (2016), 558--578.

\bibitem{Posa1976} %
L. P\'osa, 
{\em Hamiltonian circuits in random graphs}, 
Discrete Mathematics 14(4) (1976), 359--364. 

\bibitem{SS2005} %
M. Stojakovi\'c and T. Szab\' o, 
{\em Positional games on random graphs},
Random Structures and Algorithms 26 (2005), 204--223.

\bibitem{W2001}
D.~B.~West, {\bf Introduction to Graph Theory}, Prentice Hall (2001).

\end{thebibliography}
\end{document}